\newtheorem{hypo}{Hypothesis}
\newtheorem{prop}[hypo]{Proposition}
\newtheorem{thm}[hypo]{Theorem}
\newtheorem{lem}[hypo]{Lemma}
\newtheorem{rqe}[hypo]{Remark}
\newtheorem{coro}[hypo]{Corollary}
\def\B{\mathcal{B}}
\def\C{\mathcal{C}}
\def\D{\mathcal{D}}
\def\I{\mathcal{I}}
\def\O{\mathcal{O}}
\def\E{\mathcal{E}}
\def\PP{\mathbb{P}}
\def\RR{\mathbb{R}}
\def\ZZ{\mathbb{Z}}
\def\EE{\mathbb{E}}
\newcommand {\pare}[1] {\left( {#1} \right)}
\newcommand {\cro}[1] {\left[ {#1} \right]}
\newcommand {\va}[1] {\left| {#1} \right|}
\newcommand {\acc}[1] {\left\{ {#1} \right\}}
\title
       {Limit theorems for additive functionals of random walks in random scenery}
\author{Fran\c{c}oise P\`ene}
\address{Univ Brest, Universit\'e de Brest, LMBA, UMR CNRS 6205, 29238 Brest cedex, France}
\email{francoise.pene@univ-brest.fr}
\subjclass[2000]{60F05; 60F17; 60G15; 60G18; 60K37}
\keywords{random walk in random scenery; central limit theorem; local limit theorem; local time; Brownian motion; ergodicity; infinite measure; dynamical system\\
This research was supported by the french ANR project MALIN, Projet-ANR-16-CE93-0003}
\begin{document}

\begin{abstract} 
We study the asymptotic behaviour of additive functionals of random walks in random scenery. 
We establish bounds for the moments of the local time of the Kesten and Spitzer process.
These bounds combined with a previous moment convergence result (and an ergodicity result) imply the convergence in distribution of additive observables  (with a normalization in $n^{\frac 14}$).
When the sum of the observable is null, the previous limit vanishes and we prove the convergence in the sense of moments (with a normalization in $n^{\frac 18}$).
\end{abstract}
\maketitle

\section{Introduction} 

\subsection{Description of the model and of some earlier results}
We consider two independent sequences $(X_k)_{k\ge 1}$ (the increments of the random walk) and $(\xi_y)_{y\in\mathbb Z}$
(the random scenery) of independent identically distributed $\mathbb Z$-valued random variables. 
We assume in this paper that $X_1$ is centered and admits finite moments of all orders,  
and that its support generates the group $\mathbb Z$.
We define the random walk $(S_n)_{n\ge 0}$ as follows
$$S_0:=0\quad \mbox{and}\quad S_n:=\sum_{i=1}^{n}X_i\quad  \textrm{for all } n\ge 1.$$ 
We assume that $\xi_0$ is centered, that its support generates the group $\mathbb Z$, and that it admits a finite second moment $\sigma_\xi^2:=\EE[\xi_0^2]>0$.
The random walk in random scenery (RWRS) is the process defined as follows
\begin{equation}\label{Zn}
Z_n:=\sum_{k=0}^{n-1}\xi_{S_k}=\sum_{y\in\mathbb Z}\xi_yN_n(y)\, ,
\end{equation}
where we set $N_n(y)=\#\{k=0,\dots,n-1\ :\ S_k=y\}$ for the local time of $S$ at position $y$ before time $n$. This process first studied by Borodin \cite{Borodin} and Kesten and Spitzer \cite{KestenSpitzer} describes the evolution of the total amount won until time $n$ by a particle moving with respect to the random walk $S$, starting with a null amount at time $0$ and wining the amount $\xi_\ell$ at each time the particle hits the position $\ell\in\mathbb Z$. This process is a natural example of (strongly) stationary process with long time dependence.
Due to the first works by Borodin \cite{Borodin} and by Kesten and Spitzer \cite{KestenSpitzer}, we know that
$(n^{-\frac 34}Z_{\lfloor nt\rfloor})_t$ converges in distribution, as $n$ goes to infinity, to the so-called Kesten and Spitzer process
$\left(\sigma_\xi\Delta_t,t\ge 0\right)$, where $\Delta$ is defined by
\begin{equation}\label{Delta}
\Delta_t:=\int_{-\infty}^{+\infty} L_t(x)\, {\rm d}\beta_x\, ,
\end{equation}
with $(\beta_x)_{x\in\mathbb R}$ a Brownian motion and 
$(L_t(x), \, t\ge 0,\ x\in\mathbb R)$ a jointly continuous in $t$ and $x$ version 
of the local time process of a
standard Brownian motion $(B_t)_{t\ge 0}$, where $((B_t)_t,(\beta_s)_s)$ is the limit in distribution  of
$n^{-\frac 12}((S_{\lfloor nt\rfloor})_t,(\sigma_\xi^{-1}\sum_{k=1}^{\lfloor ns\rfloor}\xi_k)_s)$ as $n\rightarrow +\infty$. 
Observe that $\Delta$ is the continuous time analog of the random walk in random scenery. To be convinced of this fact, one may compare the right hand side of \eqref{Zn} with \eqref{Delta}.
The process $\Delta$ is a classical and nice example of a (strongly) stationary process, self-similar with dependent (strongly) stationary increments and exhibiting long time dependence.

In \cite{Borodin}, Borodin established the convergence in distribution of $Z_n$ when $X$ and $\xi$ have second order moments. Kesten and Spitzer established in \cite{KestenSpitzer} a functional limit theorem when the distributions of $X$ and $\xi$  belong to the domain of attraction of stable distributions with respective parameters $\alpha\ne 1$ and $\beta\in(0,2]$.
Limit theorems have been extended by Bolthausen \cite{Bolthausen} (for the case $\alpha=\beta=2$ for random walks of dimension $d=2$), by Deligiannidis and Utev \cite{DU} ($\alpha=d\in\{1,2\}$, $\beta=2$, providing some correction to \cite{Bolthausen}) and by Castell, Guillotin-Plantard and the author \cite{FFN} (when $\alpha\le d$ and $\beta<2$), completing the picture for the convergence in the sense of distribution and for the functional limit theorem (except in the case $\alpha\le 1$ and $\beta=1$). Since the seminal works by Borodin and by Kesten and Spitzer, random walks in random scenery and the Kesten and Spitzer process $\Delta$ have been the object of various studies (let us mention for example \cite{K,YX,GPJ,BP,GPP,GPHS,GPPW,AGPP}). 

Random walks in random scenery are related to other models, such as the Matheron and de Marsily Model \cite{MdM} of transport in porous media, the transience of which has been established by Campanino and  Petritis \cite{CP} and which has many generalizations (e.g. \cite{GPLN,DP,GKP,Bremont1,Bremont2}), 
and such as the Lorentz-L\'evy process (see \cite{FPMAS} for a short presentation of some models linked to random walks in random scenery).

Random walks in random scenery constitute also a model of interest in the context of dynamical systems. They correspond indeed to Birkhoff sums of a transformation called the $T,T^{-1}$ transformation appearing in \cite[p. 682, Problem 2]{Weiss} where it was asked whether this Kolmogorov automorphism is Bernoulli or not. In \cite{Kalikow}, Kalikow answered negatively this question by proving that this transformation is not even loosely Bernoulli.

\subsection{Main results}\label{mainresults}
Before stating our main results, let us introduce some additional notations. Let $d\in\mathbb N$ be the greatest common divisor of the set
$\{x\in\mathbb Z,\ \mathbb P(\xi_0-\xi_1=x)>0\}$ and $\alpha\in\mathbb Z$ such that $\mathbb P(\xi_0=\alpha)>0$. This means that the random variables $\xi_\ell$ take almost surely their
values in $\alpha+d\mathbb Z$ and that $d$ is largest positive integer satisfying this property. Since the support of $\xi$ generates the group $\mathbb Z$, necessarily $\alpha$ and $d$ are coprime. 
Recall that the quantity $d$
can be also simply characterized using the common characteristic function $\varphi_\xi$ of the $\xi_\ell$.\footnote{Indeed $d\ge 1$ is such that $\{u\, :\, |\varphi_\xi(u)|=1\} = (2\pi/d)\ZZ$
and a.s.
$
e^{\frac{2i\pi\xi}d}=e^{\frac{2i\pi\alpha}d}$
which is a primitive $d$-th root of the unity.
}

In the present paper we are interested in the asymptotic behaviour of additive functionals of the RWRS $(Z_n)_{n\ge 1}$ that is of quantities of the following form:
\[
\mathcal Z_n:=\sum_{k=1}^{n}f(Z_k)
\]
where $f:\mathbb Z\rightarrow\mathbb R$ is absolutely summable.
This quantity is strongly related to the local time $\mathcal N_n$ of the RWRS $Z$, which is defined by
\[
\mathcal N_n(a)=\#\{k=1,...,n\, :\, Z_k=a\}\, .
\]
Indeed if $f=\mathbf 1_0$, then $\mathcal Z_n=\mathcal N_n(0)$ and if
$f=\mathbf 1_0-\mathbf 1_1$, then $\mathcal Z_n=\mathcal N_n(0)-\mathcal N_n(1)$. In the general case, $\mathcal Z_n$ can be rewritten
\[
\mathcal Z_n:=\sum_{a\in\mathbb Z}f(a)\mathcal N_n(a)\, .
\]
The asymptotic behaviour of $(\mathcal N_n(0))_n$ has been studied by Castell, Guillotin-Plantard, Schapira and the author in \cite[Corollary 6]{BFFN2}, in which it has been proved that the moments of $(n^{-\frac 14}\mathcal N_n(0))_{n\ge 1}$ converge to those of the local time $\mathcal L_1(0)$ at position 0 and until time 1 of the process $\Delta$.
The proof of this result was based on a multitime local limit theorem \cite[Theorem 5]{BFFN2}
extending a local limit theorem contained in \cite{BFFN1} and on the finiteness of the moments of $\mathcal L_1(0)$ (which was a delicate question).
We complete this previous work by establishing in Section \ref{boundmoment} the following bounds for the moments of $\mathcal L_1(0)$.
\begin{thm}\label{propboundmoment}
For any $\eta_0>0$, there exists $\mathfrak a>0$ and $C>0$ such that
\[
(Cm)^{\frac {3m}4}\le
\mathbb E[(\mathcal L_1(0))^{m}]=\mathcal O\left(\frac{\mathfrak a^m\, (m!)^{ \frac 32+\eta_0}}{\Gamma(\frac m4+1)}\right)\le \mathcal O\left(m^{m\left(\frac 54+2\eta_0\right)}\right)
\, .
\]
\end{thm}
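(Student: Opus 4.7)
The plan is to start from the Gaussian conditional representation of the moment: given the driving Brownian motion $B$, the vector $(\Delta_{s_1},\ldots,\Delta_{s_m})$ is centred Gaussian with covariance $C(\mathbf s)_{ij}=\int_{\mathbb R}L_{s_i}(x)L_{s_j}(x)\,dx$ (directly from $\Delta_s=\int L_s(x)\,d\beta_x$), so the occupation-density formula gives
\[
\EE[\mathcal L_1(0)^m]\;=\;(2\pi)^{-m/2}\,\EE\!\left[\int_{[0,1]^m}(\det C(\mathbf s))^{-1/2}\,d\mathbf s\right].
\]
By symmetry I restrict to the simplex $\{0<s_1<\cdots<s_m<1\}$ (a factor $m!$), and change basis from $(L_{s_i})_i$ to the temporally disjoint local-time increments $\ell_i:=L_{s_i}-L_{s_{i-1}}$, a unipotent transformation, so that $\det C(\mathbf s)=\det(\langle\ell_i,\ell_j\rangle_{L^2})_{ij}$. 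After the translation $y=x-B_{s_{i-1}}$, $\|\ell_i\|_{L^2}^2$ depends only on the Brownian increment $(B_{s_{i-1}+v}-B_{s_{i-1}})_{0\le v\le u_i}$ (where $u_i=s_i-s_{i-1}$); since these increments are independent, the scalars $\|\ell_i\|_{L^2}$ are independent across~$i$.

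\textbf{Lower bound.} Hadamard's inequality for the Gram matrix of the $\ell_i$'s gives $\det C(\mathbf s)\le\prod_{i=1}^m\|\ell_i\|_{L^2}^2$. Combined with Brownian self-similarity $\|\ell_i\|_{L^2}^2\stackrel{d}{=}u_i^{3/2}V_1^{(i)}$ (with $V_1^{(i)}\stackrel{d}{=}\int L_1(x)^2\,dx$ i.i.d.) and the independence just noted,
\[
\EE[(\det C(\mathbf s))^{-1/2}]\;\ge\;\prod_{i=1}^m\EE[\|\ell_i\|_{L^2}^{-1}]\;=\;\EE[V_1^{-1/2}]^m\,\prod_{i=1}^m u_i^{-3/4}.
\]
Finiteness of $\EE[V_1^{-1/2}]$ is elementary from $V_1\ge 1/R$ (with $R$ the range of $B$ on $[0,1]$, Gaussian-tailed). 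The Dirichlet integral
\[
\int_{\{u_i>0,\,\sum_i u_i\le 1\}}\prod_{i=1}^m u_i^{-3/4}\,du\;=\;\frac{\Gamma(1/4)^m}{\Gamma(1+m/4)}
\]
together with the Stirling estimate $m!/\Gamma(1+m/4)\sim\mathfrak c^m m^{3m/4}$ then delivers the lower bound $(Cm)^{3m/4}$.

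\textbf{Upper bound.} One needs the reverse inequality: a \emph{lower} bound on $\det C$. Gram--Schmidt expresses $\det C=\prod_{i=1}^m\mathrm{dist}_{L^2}(\ell_i,\mathrm{span}(\ell_1,\ldots,\ell_{i-1}))^2$; each distance is controlled via the conditional independence of $\ell_i$ from the past given $B_{s_{i-1}}$ and negative-moment bounds on $\|\ell_i\|_{L^2}$. A H\"older inequality with exponents close to $1$ is needed to decouple the $m^2$ cross inner products $\langle\ell_i,\ell_j\rangle_{L^2}$ from the diagonal terms: the slack $\eta_0>0$ in the statement is precisely the H\"older excess. The same Dirichlet integral then furnishes the denominator $\Gamma(1+m/4)$, while the H\"older step contributes the extra $(m!)^{1/2+\eta_0}$; the final inequality $\le\mathcal O(m^{m(5/4+2\eta_0)})$ follows by Stirling.

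\textbf{Main obstacle.} The difficulty lies entirely in the upper bound, more precisely in the lower bound on the Gram determinant. Hadamard's inequality goes the wrong direction, and the Gram--Schmidt distances shrink whenever the Brownian motion revisits previously visited regions, so they depend delicately on the spatial geometry of $B$. Decoupling the resulting spatial dependences requires H\"older-type estimates that are never tight; the presence of an arbitrarily small but positive $\eta_0$ is the quantitative expression of this irreducible loss, and explains why the upper bound $(m!)^{3/2+\eta_0}/\Gamma(m/4+1)$ is (presumably) not of the same order as the lower bound $(Cm)^{3m/4}$.
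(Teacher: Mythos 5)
Your lower bound is essentially the paper's argument: Hadamard's inequality for the Gram matrix of the local-time increments, independence and scaling of $\Vert \ell_i\Vert_{L^2}$, finiteness of $\mathbb E[V_1^{-1/2}]$, and the Dirichlet integral $\Gamma(1/4)^m/\Gamma(1+m/4)$ reproduce the paper's Lemmas on the moment representation and on the simplex integral, so that half is fine (the missing factor $\sigma_\xi$ only affects the constants $C,\mathfrak a$).

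The upper bound, however, has a genuine gap, and it is exactly the point you flag as the ``main obstacle'' without resolving it. After Gram--Schmidt you need, for each $k\le m$, a bound on the conditional expectation of $\bigl(\mathrm{dist}_{L^2}(\ell_{k+1},\mathrm{span}(\ell_1,\dots,\ell_k))\bigr)^{-1}$ given the past; by conditional independence and scaling this reduces to bounding $\sup_{V}\mathbb E\bigl[\left(d(L_1,V)\right)^{-1}\bigr]$ over \emph{all} linear subspaces $V\subset L^2(\mathbb R)$ of dimension at most $k$, and one needs the quantitative growth $k^{1/2+o(1)}$ in order to produce the factor $(m!)^{1/2+\eta_0}$ in the statement. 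Negative moments of $\Vert\ell_{k+1}\Vert_{L^2}$ together with a H\"older decoupling of the cross inner products cannot deliver this: the distance to the span can be far smaller than the norm (the increment of local time is nearly in the span whenever the path revisits previously occupied regions), and H\"older-type inequalities between entries of a Gram matrix never yield a lower bound on its determinant, since the determinant can be arbitrarily close to degenerate on events of non-negligible probability. What is required is a uniform-in-$V$ anti-concentration (small-ball) estimate of the form $\mathbb P\left(d(L_1,V)<x^{-1}\right)$ small with explicit dependence on $k$; in the paper this is Theorem~\ref{EstiVk}, whose proof occupies most of Section~\ref{boundmoment} and rests on discretizing $L_1$, the second Ray--Knight theorem (identifying the spatial local-time profile with a squared Bessel process of dimension $0$), transition-density bounds for that process, and a covering argument for balls in $k$-dimensional subspaces. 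Neither this estimate nor any substitute for it appears in your proposal, so even finiteness of $\mathbb E\left[\left(d(L_1,V)\right)^{-1}\right]$ for a fixed $k$-dimensional $V$ is not justified, let alone the $k^{1/2+\eta_0}$ growth on which the claimed upper bound depends.
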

Even if 
it uses some ideas that already existed in \cite{BFFN2}, the proof of Theorem~\ref{propboundmoment} (given in Section~\ref{boundmoment}) is different in many aspects. It requires indeed much more precise estimates which changes in the approach of the control of the moments.
The proof of Theorem~\ref{propboundmoment} relies on several auxiliary results. We summarize quickly its strategy.
We will prove (see \eqref{FFF} coming from \cite{BFFN2} and \eqref{FFF-1}) that
\[
\mathbb E[(\mathcal L_1(0))^{m}]=\frac {m!}{(2\pi \sigma^2_\xi)^{\frac m2}}\int_{0<t_1<...<t_m<1}\prod_{k=0}^{m-1}(t_{k+1}-t_k)^{-\frac 34}\mathbb E\left[\prod_{k=0}^{m-1}\left(d(L^{(k+1)},W_k)\right)^{-1}\right]\, dt_1...dt_m\, ,
\]
where we set $W_k:=Vect(L^{(1)},...,L^{(k)})$ and $L^{(k+1)}:=(L_{t_{k+1}}-L_{t_{k}})/(t_{k+1}-t_{k})^{\frac 34}$ (normalized so that $|L^{(m)}|_{L^2(\mathbb R)}$ has the same distribution as $|L_1|_{L^2(\mathbb R)}$).
We will prove, in Lemma \ref{LEMME00}, that
\[
\exists c,C>0,\quad
m!\, \int_{0<t_1<...<t_{m}<1}   \prod_{k=0}^{m-1}(t_{k+1}-t_k)^{-\frac 34}\, dt_1...dt_{m}\sim c(Cm)^{\frac {3m}4}\, ,
\]
as $m\rightarrow +\infty$
and, in Lemma~\ref{LEMME0}, that
\begin{equation}\label{encadrement}
\left(\mathbb E\left[\left|L_1
\right|_{L^2(\mathbb R)}^{-1}\right]\right)^m\le
\mathbb E\left[\prod_{k=0}^{m-1}\left(d(L^{(k+1)},W_k
)\right)^{-1}\right]\le \prod_{k=0}^{m-1}\left(\sup_{V\in \mathcal V_k}\mathbb E\left[\left( d\left(L_{1},V\right)\right)^{-1}\right]\right)\, ,
\end{equation}
where $d(\cdot,\cdot)$ is the distance associated with the $L^2$-norm on $
L^2(\mathbb R)$ and where $\mathcal V_k$ is the set of linear subspaces  of $L^2(\mathbb R)$ of dimension at most $k$.
Theorem~\ref{propboundmoment} will then follow from the next self-interesting estimate on the local time $L_1$ of the Brownian motion $B$ up to time 1.
\begin{thm}\label{EstiVk}
\begin{equation}\label{ESTIMk0}
\sup_{V\in \mathcal V_k}\mathbb E\left[\left( d\left(L_{1},V\right)\right)^{-1}\right]= k
^{\frac 12+o(1)}\, ,\quad\mbox{as }k\rightarrow +\infty\, .
\end{equation}
\end{thm}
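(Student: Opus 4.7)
The bound contains a matching lower and upper estimate, which I would prove separately.

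For the lower bound $\sup_V \EE[d(L_1, V)^{-1}] \ge k^{\frac 12 - o(1)}$, I would exhibit an explicit near-extremizer: take $V_k$ to be the span of the indicator functions of $k$ consecutive intervals of length $2A/k$ partitioning $[-A,A]$, for a large constant $A>0$. The best $V_k$-approximation of $L_1$ is the piecewise-constant function equal to the mean of $L_1$ on each piece, and the $(1/2-\eta)$-H\"older continuity of $x\mapsto L_1(x)$, with a random H\"older constant $C_\eta$ admitting all moments, gives
\[
d(L_1, V_k)^2 \le \mathrm{const}\cdot C_\eta^2\, k^{-1+2\eta}
\]
on the event where $L_1$ is supported in $[-A,A]$. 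This event, intersected with $\{C_\eta \le C_0\}$, has probability bounded below uniformly in $k$ for $A, C_0$ large; on it, $d(L_1,V_k)^{-1} \ge c\,k^{1/2-\eta}$, so $\EE[d(L_1,V_k)^{-1}] \ge c'\,k^{1/2-\eta}$, and letting $\eta \downarrow 0$ concludes.

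For the upper bound $\EE[d(L_1,V)^{-1}] \le k^{\frac 12+o(1)}$ uniform in $V \in \mathcal V_k$, the plan is to combine the layer-cake formula
\[
\EE[d(L_1, V)^{-1}] = \int_0^\infty \PP\bigl(d(L_1, V) < \epsilon\bigr)\, \frac{d\epsilon}{\epsilon^2}
\]
with a uniform small-ball estimate of the form $\PP(d(L_1, V) < \epsilon) \le (C \epsilon \sqrt k)^{1 + \delta(k)}$, $\delta(k)\to 0$, which on integration yields the target bound. To produce the small-ball estimate I would write $d(L_1, V)^2 = \|P_{V^\perp} L_1\|_{L^2}^2 = \|L_1\|^2 - \|P_V L_1\|^2$ and use the Laplace identity
\[
\frac{1}{d(L_1, V)} = \frac{1}{\sqrt\pi} \int_0^\infty \lambda^{-\frac 12}\, e^{-\lambda\, d(L_1, V)^2}\, d\lambda,
\]
together with the Gaussian duality $e^{\lambda \|P_V L_1\|^2} = \EE_g [e^{\sqrt{2\lambda}\langle g, L_1\rangle}]$, $g$ a standard Gaussian on the $k$-dimensional space $V$. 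This reduces the problem to estimating $\EE[\exp(-\lambda\|L_1\|^2 + \sqrt{2\lambda}\langle g, L_1\rangle)]$ averaged over $g$, a Brownian expectation that can be studied via the representations $\|L_1\|^2 = \int_0^1\!\!\int_0^1 \delta(B_s-B_t)\,ds\,dt$ (self-intersection local time) and $\langle g, L_1\rangle = \int_0^1 g(B_s)\,ds$.

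The main obstacle is the uniformity over $V \in \mathcal V_k$ in the small-ball estimate. A one-direction bound $d(L_1,V) \ge |\langle L_1, e\rangle|$ for unit $e \in V^\perp$ is not enough: for $e = \mathbf 1_{[0,h]}/\sqrt h$ the scalar $\langle L_1, e\rangle$ has density of scale $\sqrt h$ at the origin, unbounded as $h\downarrow 0$. One must therefore use the joint law of many orthogonal projections of $L_1$ in $V^\perp$ simultaneously. Either the Laplace--Gaussian route above --- whose only dependence on $V$ is through the dimension $k$ of the Gaussian --- or a direct construction, for each $V$, of an orthonormal system $(e_j)_{j \le m}$ in $V^\perp$ of size $m\sim k^{1+o(1)}$ (obtained from piecewise-constant functions on a fine enough partition, Gram--Schmidted against $V$) with a joint density at the origin controlled by $k^{1/2+o(1)}$, should deliver the required small-ball bound and hence the theorem.
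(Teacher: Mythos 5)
Your lower bound is correct and is essentially the paper's own argument (indicators of $k$ small intervals, piecewise-constant approximation of $L_1$, H\"older continuity of the local time with a H\"older constant having moments, restriction to an event of positive probability where $B$ stays in the support of the partition), so nothing to add there.

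The upper bound, however, has a genuine gap: you correctly reduce the problem to a small-ball estimate $\mathbb P\left(d(L_1,V)<\epsilon\right)$ that is \emph{uniform over all} $V\in\mathcal V_k$, and you correctly identify this uniformity as the main obstacle, but neither of your two suggested routes actually delivers it. The Laplace--Gaussian route is circular: writing $e^{\lambda\Vert P_VL_1\Vert^2}=\mathbb E_g\left[e^{\sqrt{2\lambda}\langle g,L_1\rangle}\right]$ with $g$ standard Gaussian \emph{on $V$} does not make the dependence on $V$ reduce to its dimension --- the law of the random function $x\mapsto g(x)$, hence of $\int_0^1 g(B_s)\,ds$, depends on the subspace $V$ itself, and integrating out $g$ merely reproduces the quantity $\Vert P_VL_1\Vert^2$ one needs to control uniformly in $V$ in the first place. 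The alternative sketch (Gram--Schmidt piecewise-constant functions against $V$ to get $m\sim k^{1+o(1)}$ orthonormal directions in $V^\perp$ whose joint density at the origin is ``controlled by $k^{1/2+o(1)}$'') names exactly the missing estimate without proving it, and it is far from clear it can be obtained this way for an adversarially chosen $V$. This is where all the work in the paper lies: $L_1$ is discretized at $M\approx\theta k$ grid points (on several shifted grids, with hitting-time and strong Markov arguments guaranteeing the local time is macroscopic at one of the shifted base points), the second Ray--Knight theorem replaces the spatial local-time process by a squared Bessel process of dimension $0$, a concentration event for that process is controlled via Burkholder--Davis--Gundy, the $\varepsilon$-neighbourhood of the ($\le k$)-dimensional projected subspace intersected with the relevant ball is covered by at most $(c\mathcal R)^k$ balls of radius comparable to $\varepsilon$ (Rogers' covering theorem), and the probability of each small ball is bounded by iterating one-step transition-density estimates of the Bessel process (modified Bessel function asymptotics), with a delicate choice of exponents so that the $x$-integral converges and the final contribution is $\mathcal O(k^{1/2+\eta_0})$. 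Without this machinery, or a genuine substitute for it, the uniform small-ball bound --- and hence the upper half of the theorem --- is not established in your proposal.
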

Now we use the following classical argument for positive random variables.
The upper bound provided by Theorem~\ref{propboundmoment} allows us to prove that the Carleman's criterion is satisfied for
$\mathcal E\sqrt{\mathcal L_1(0)}$  where $\mathcal E$ is a centered Rademacher distribution independent of $\mathcal L_1(0)$ and of $Z$, indeed:
\[
\sum_{m\ge 1}\mathbb E[(\mathcal L_1(0))^{m}]^{-\frac 1{2m}}\ge
c_1\sum_{m\ge 1}m^{-\frac 58-\eta_0} =\infty\, ,
\]
for every $\eta_0\in(0,\frac 38)$.
This enables us to deduce from \cite[Corollary 6]{BFFN2} that $n^{-\frac 18}\mathcal E\sqrt{\mathcal N_n(0)}$ converges in distribution to $\mathcal E\sqrt{\sigma_\xi^{-1} \mathcal L_1(0)}$ and so that
\begin{equation}\label{cvgeloi}
n^{-\frac 14}\mathcal N_n(0)\stackrel{\mathcal L}{\longrightarrow}\sigma_\xi^{-1} \mathcal L_1(0)\, ,\quad\mbox{as}\ n\rightarrow +\infty\, ,
\end{equation}
where $\stackrel{\mathcal L}{\longrightarrow}$ means convergence in distribution.
This convergence in distribution is extended to more general observables.
\begin{thm}\label{LLN}
Let $f:\mathbb Z\rightarrow \mathbb R$ be such that $\sum_{a\in\mathbb Z}|f(a)|<\infty$.
Then $ n^{-\frac 14}\sum_{k=0}^{n-1}f(Z_k)$
converges in 
distribution
and in the sense of moments  
to $\sum_{a\in\mathbb Z}f(a)\sigma _\xi^{-1}\mathcal L_{1}(0)$.
\end{thm}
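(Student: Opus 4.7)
I decompose $\mathcal Z_n=\sum_{a\in\mathbb Z} f(a)\mathcal N_n(a)$ and reduce the statement to the asymptotics of the integer-site local times $\mathcal N_n(a)$. Since $Z$ fluctuates at the scale $n^{3/4}$, one expects that for every fixed $a\in\mathbb Z$ one has $n^{-\frac 14}\mathcal N_n(a)\to \sigma_\xi^{-1}\mathcal L_1(0)$: the \emph{same} limit as for $a=0$, because $a/n^{3/4}\to 0$ identifies $a$ with the origin of $\mathbb R$ in the limit. The case $a=0$ is \eqref{cvgeloi}, with moment convergence coming from \cite[Corollary 6]{BFFN2}. To lift this to joint convergence of $(n^{-\frac 14}\mathcal N_n(a))_{a\in A}$ for every finite $A\subset\mathbb Z$ with common limit $\sigma_\xi^{-1}\mathcal L_1(0)$, I would use the multitime local limit theorem \cite[Theorem 5]{BFFN2} to show that, uniformly in bounded $a_1,\dots,a_m$ and for every $m\ge 1$,
\[
n^{-m/4}\, \mathbb E[\mathcal N_n(a_1)\cdots\mathcal N_n(a_m)]\longrightarrow \sigma_\xi^{-m}\,\mathbb E\bigl[\mathcal L_1(0)^m\bigr],
\]
and invoke the Carleman argument on $\mathcal E\sqrt{\cdot}$ already used for $a=0$ (which requires only the upper bound of Theorem \ref{propboundmoment}). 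An attractive alternative for this step is a Hopf-type ratio ergodic theorem on the natural infinite-measure extension of the $T,T^{-1}$ system, which would yield $\mathcal N_n(a)/\mathcal N_n(0)\to 1$ almost surely and allow a Slutsky-type argument.

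For finitely supported $f$, linearity then gives $n^{-\frac 14}\sum_{a\in A}f(a)\mathcal N_n(a)\to (\sum_a f(a))\sigma_\xi^{-1}\mathcal L_1(0)$. For general summable $f$ I truncate $f=f\ind_{[-R,R]}+g_R$ and control the tail: from the one-time LLT for the RWRS one has $\mathbb P(Z_k=a)\le C k^{-3/4}$ uniformly in $a\in\mathbb Z$, hence $\mathbb E[\mathcal N_n(a)]\le C' n^{1/4}$ uniformly in $a$ and $n$, so that
\[
\mathbb E\Bigl[n^{-\frac 14}\sum_{|a|>R}|f(a)|\mathcal N_n(a)\Bigr]\le C'\sum_{|a|>R}|f(a)|\to 0\quad\text{as }R\to\infty,
\]
uniformly in $n$. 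Together with the convergence of the truncated part this yields the convergence in distribution. For convergence of moments one expands
\[
\mathbb E\bigl[(n^{-\frac 14}\mathcal Z_n)^m\bigr]=n^{-m/4}\sum_{a_1,\dots,a_m\in\mathbb Z}\prod_{j=1}^m f(a_j)\,\mathbb E[\mathcal N_n(a_1)\cdots\mathcal N_n(a_m)],
\]
and passes to the limit by dominated convergence, the needed domination coming from a multitime bound $\mathbb E[\mathcal N_n(a_1)\cdots\mathcal N_n(a_m)]\le C_m n^{m/4}$ uniform in $a_1,\dots,a_m$.

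The main technical obstacle is precisely this uniformity in $a$ (and in $a_1,\dots,a_m$) of the local limit estimates. The one-time version is standard from the characteristic-function proof of the LLT; the multitime version, which is decisive both for the joint convergence step and for dominated convergence in the moments, requires extracting uniform-in-$a$ bounds from the techniques of \cite{BFFN2,BFFN1}, where the $a=0$ case is treated. Conceptually the modifications are routine, but they must be tracked with care.
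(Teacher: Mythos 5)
Your proposal follows essentially the same two routes as the paper: your main route (extend the multitime local limit theorem of \cite{BFFN2} to arbitrary integer targets with bounds uniform in the $a_i$'s, expand the moments and dominate, then get distributional convergence from Theorem~\ref{propboundmoment} via the Rademacher--square-root Carleman argument) is exactly Proposition~\ref{LemmeLLN} and Appendix~\ref{Appendcvgcemoment}, while your ``attractive alternative'' (Hopf ratio ergodic theorem on the infinite-measure extension plus Slutsky) is precisely the paper's Propositions~\ref{invariant} and~\ref{LLNbis}. The only substantive points you leave implicit are carried out in the paper without changing your structure: the lattice bookkeeping in the general-target LLT (the indicator $a_i\in n_i\alpha+d\mathbb Z$ and the compensating factor $d^k$, handled with $\alpha_0$ such that $\alpha\alpha_0\in 1+d\mathbb Z$), and, for the alternative route, the fact that recurrence ergodicity of $(\widetilde\Omega,\widetilde T,\widetilde\mu)$ must itself be proved.
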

The proof of the moments convergence in Theorem~\ref{LLN} is a straigthforward adaptation of \cite{BFFN2} and is given in Appendix~\ref{Appendcvgcemoment}. 
Due to Theorem~\ref{propboundmoment} and to the above argument that lead to \eqref{cvgeloi},  
the convergence in distribution in Theorem~\ref{LLN} is a consequence of the moments convergence.
Another strategy to prove the convergence in distribution in Theorem~\ref{LLN}
consists in seing this result as a direct consequence of~\eqref{cvgeloi} combined with Proposition~\ref{invariant} stating the
ergodicity of the dynamical system $(\widetilde\Omega,\widetilde T,\widetilde\mu)$ corresponding to
\[
\widetilde T^k((X_{m+1})_{m\in\mathbb Z},(\xi_{m})_{m\in\mathbb Z},Z_{0})=((X_{k+m+1})_{m\in\mathbb Z},(\xi_{m+S_{k}})_{m\in\mathbb Z},Z_{k})\, .
\]
This dynamical system preserves the infinite measure $\widetilde\mu:=\mathbb P_{X_1}^{\otimes \mathbb Z}\otimes\mathbb P_{\xi_0}^{\otimes\mathbb Z}\otimes\lambda_{\mathbb Z}$, where $\lambda_{\mathbb Z}$ is the counting measure on $\mathbb Z$.
Actually, thanks to \eqref{cvgeloi} and to the recurrence ergodicity of  $(\widetilde\Omega,\widetilde T,\widetilde\mu)$, we prove  the following stronger version of the convergence in distribution of Theorem~\ref{LLN}.
\begin{thm}\label{LLNter}
For any $\widetilde\mu$-integrable function $\widetilde f:\widetilde\Omega\rightarrow\mathbb R$,
\[
n^{-\frac 14}\sum_{k=0}^{n-1}\widetilde f\circ\widetilde T^k
\stackrel{\mathcal L(\widetilde\mu)}{\longrightarrow}\frac{\int_{\widetilde\Omega}\widetilde f\, d\widetilde\mu}{\sigma_\xi} \mathcal L_1(0)\, ,\quad\mbox{as}\ n\rightarrow +\infty\, ,
\]
where $\stackrel{\mathcal L(\widetilde\mu)}{\longrightarrow}$ means
convergence in distribution with respect to any probability measure absolutely continuous with respect to $\widetilde\mu$.
\end{thm}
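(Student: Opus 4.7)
The plan is to deduce Theorem~\ref{LLNter} from three ingredients: the distributional convergence \eqref{cvgeloi}, the conservative ergodicity of $(\widetilde\Omega,\widetilde T,\widetilde\mu)$ provided by Proposition~\ref{invariant}, and Hopf's ratio ergodic theorem, combined via a Slutsky-type argument.

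Let $A:=\{Z_0=0\}$, so that $\widetilde\mu(A)=1$ and $\mathcal N_n(0)=\sum_{k=0}^{n-1}\mathbf{1}_A\circ\widetilde T^k+O(1)$. The first step is to upgrade \eqref{cvgeloi} from convergence under $\mathbb P$ to strong distributional convergence under $\widetilde\mu$, that is, convergence in distribution under every probability absolutely continuous with respect to $\widetilde\mu$. The key observation is that the sequence $Y_n:=n^{-\frac 14}\sum_{k=0}^{n-1}\mathbf{1}_A\circ\widetilde T^k$ is asymptotically $\widetilde T$-invariant, since $Y_n\circ\widetilde T-Y_n=O(n^{-\frac 14})$. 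The Aaronson--Zweim\"uller extension of Eagleson's theorem, applicable in our conservative ergodic infinite-measure setting, then yields
\[
n^{-\frac 14}\mathcal N_n(0)\stackrel{\mathcal L(\widetilde\mu)}{\longrightarrow}\sigma_\xi^{-1}\mathcal L_1(0).
\]

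Next, Hopf's ratio ergodic theorem applied to $(\widetilde\Omega,\widetilde T,\widetilde\mu)$ gives, for every $\widetilde f\in L^1(\widetilde\mu)$,
\[
\frac{\sum_{k=0}^{n-1}\widetilde f\circ\widetilde T^k}{\sum_{k=0}^{n-1}\mathbf{1}_A\circ\widetilde T^k}\xrightarrow[n\to\infty]{\widetilde\mu\text{-a.s.}}\frac{\int_{\widetilde\Omega}\widetilde f\, d\widetilde\mu}{\widetilde\mu(A)}=\int_{\widetilde\Omega}\widetilde f\, d\widetilde\mu,
\]
the denominator tending to $+\infty$ $\widetilde\mu$-a.s.~by conservativity and ergodicity. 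Writing
\[
n^{-\frac 14}\sum_{k=0}^{n-1}\widetilde f\circ\widetilde T^k=\Big(n^{-\frac 14}\sum_{k=0}^{n-1}\mathbf{1}_A\circ\widetilde T^k\Big)\cdot\frac{\sum_{k=0}^{n-1}\widetilde f\circ\widetilde T^k}{\sum_{k=0}^{n-1}\mathbf{1}_A\circ\widetilde T^k}
\]
and invoking Slutsky's lemma (the first factor converges in strong distribution to $\sigma_\xi^{-1}\mathcal L_1(0)$ by the previous step, the second converges $\widetilde\mu$-a.s., hence in probability under any probability absolutely continuous with respect to $\widetilde\mu$, to $\int_{\widetilde\Omega}\widetilde f\, d\widetilde\mu$) yields the claimed convergence.

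The main obstacle I anticipate is the first step: identifying and rigorously invoking the abstract "strong convergence" principle promoting distributional convergence under one absolutely continuous probability to convergence under all of them. Once this Aaronson--Thaler--Zweim\"uller-type statement is in hand, the rest is essentially routine. Note that no sign hypothesis on $\widetilde f$ is needed, as Hopf's theorem applies to every $\widetilde\mu$-integrable function; the bounded difference $\mathcal N_n(0)-\sum_{k=0}^{n-1}\mathbf{1}_A\circ\widetilde T^k=O(1)$ disappears after division by $n^{\frac 14}$ and therefore does not affect the limit.
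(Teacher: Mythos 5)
Your proposal is correct and uses essentially the same ingredients and structure as the paper: the ratio ergodic theorem (via the recurrence ergodicity of $(\widetilde\Omega,\widetilde T,\widetilde\mu)$ from Proposition~\ref{invariant}), the convergence \eqref{cvgeloi}, Slutsky, and the Zweim\"uller strong-distributional-convergence principle, which is exactly the result \cite[Theorem 1]{Zwei} the paper invokes (your asymptotic-invariance check $Y_n\circ\widetilde T-Y_n=O(n^{-\frac 14})$ is what makes it applicable). The only difference is a harmless reordering: you upgrade the local-time sum to strong distributional convergence first and then apply Slutsky under an arbitrary absolutely continuous probability, whereas the paper first combines Hopf and \eqref{cvgeloi} under $\mu\otimes\delta_0$ (Proposition~\ref{LLNbis}) and then applies Zweim\"uller's theorem to the full Birkhoff sum.
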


Theorem~\ref{LLN} can be seen as weak law of large numbers, with a non constant limit.
When $\sum_{a\in\mathbb Z}f(a)=0$, the limit given by Theorem \ref{LLN} vanishes, but then the next result provides a limit theorem for $\mathcal Z_n=\sum_{k=0}^{n-1}f(Z_k)$ with another normalization.
This second result corresponds to a central limit theorem for additive functionals
of RWRS.
Let us indicate that,
contrarily to the moments convergence in Theorem~\ref{LLN}, the next result is not an easy adaptation of \cite{BFFN2}, even if its proof (given in Section \ref{proofPAPATCL}) uses the same initial idea (computation of moments using the local limit theorem) and, at the begining, some estimates established in \cite{BFFN1,BFFN2}. Indeed, important technical difficulties arise from the cancellations coming from the fact that $\sum_{a\in\mathbb Z}f(a)=0$.
\begin{thm}\label{PAPATCL}
Assume moreover that there exists some $\kappa\in(0,1]$ such that $\xi_0$ admits a moment of order $2+\kappa$.
Let $f:\mathbb Z\rightarrow \mathbb R$ be such that
$\sum_{a\in\mathbb Z}(1+|a|)|f(a)|<\infty$ and that $\sum_{a\in\mathbb Z}f(a)=0$. Then
\[
\sum_{\ell\in\mathbb Z}\left|\sum_{\ell'=0}^{d
-1}\sum_{a,b\in\mathbb Z^2}f(a)f(b)\mathbb P(Z_{|\ell'+d
\ell|}=a-b)\right|<\infty\, .
\]
Moreover all the moments of
$ \left(n^{-\frac 18}\sum_{k=0}^{n-1}f(Z_k)\right)_n$
converges
to those of 
$\sqrt{
\frac{
\sigma^2_f}{\sigma_\xi 
}
\mathcal L_1(0)}\mathcal N$, where $\mathcal N$ is a standard Gaussian random variable independent of $\mathcal L_1(0)$ and where
\begin{equation}\label{eqsigma2}
\sigma^2_f:=\sum_{k\in\mathbb Z}\sum_{a,b\in\mathbb Z^2}f(a)f(b)\mathbb P(Z_{|k|}=a-b)\, .
\end{equation}
In particular, 
for any $a\in\mathbb Z$,
$ \left(n^{-\frac 18}(\mathcal N_n(a)-\mathcal N_n(0))\right)_n$
converges
in the sense of moments  
to $\sqrt{
\frac{
\sigma^2_{0,a}}{\sigma_\xi 
}
\mathcal L_1(0)}\mathcal N$,  with 
$\sigma^2_{0,a}:=\sum_{k\in\mathbb Z}\left[2\mathbb P(Z_{|k|}=0)-\mathbb P(Z_{|k|}=a)-\mathbb P(Z_{|k|}=-a)\right]$.
\end{thm}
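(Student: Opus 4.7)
The proof has two essentially distinct parts: the series in the first displayed equation of the theorem, and the convergence of moments.

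\textbf{Summability.} By Fourier inversion on $\ZZ$, with $\hat f(u):=\sum_a f(a)e^{iua}$,
\[
\sum_{a,b}f(a)f(b)\PP(Z_k=a-b)=\frac{1}{2\pi}\int_{-\pi}^\pi|\hat f(u)|^2\,\EE[e^{iuZ_k}]\,du.
\]
The assumptions $\sum_a f(a)=0$ and $\sum_a(1+|a|)|f(a)|<\infty$ ensure $\hat f\in C^1$ with $\hat f(0)=0$, giving $|\hat f(u)|^2\le C(u^2\wedge 1)$. Summing over $\ell'=0,\dots,d-1$ cancels the lattice-induced oscillations of $\EE[e^{iuZ_k}]$ at the points $2\pi r/d$, so that the effective integration is localized near $u=0$. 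Using the classical conditional bound $|\EE[e^{iuZ_k}\mid S]|=\prod_y|\varphi_\xi(uN_k(y))|\le\exp(-c u^2 V_k)$ with $V_k:=\sum_y N_k(y)^2$ (of typical size $k^{3/2}$), the change of variables $v=uk^{3/4}$ yields a decay like $\ell^{-1-\delta}$ for the $\ell$-th term, whence summability.

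\textbf{Moment convergence.} Expanding
\[
\EE[\mathcal Z_n^{2p}]=\sum_{k_1,\dots,k_{2p}=0}^{n-1}\sum_{a_1,\dots,a_{2p}}\prod_i f(a_i)\,\PP(Z_{k_1}=a_1,\dots,Z_{k_{2p}}=a_{2p}),
\]
I would analyze the ordered sum according to which successive time-gaps are ``large'' (order $n$) and which are ``short.'' The dominant contribution comes from configurations in which the $2p$ times split into $p$ pairs of close neighbors, with the $p$ pair-centers $t_1<\cdots<t_p$ well-separated at macroscopic scale. Between pairs, the multi-time local limit theorem of \cite{BFFN1,BFFN2} provides a joint density which, summed over pair centers in $[0,n]^p$, reconstructs $\sigma_\xi^{-p}\EE[\mathcal L_1(0)^p]$. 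Within each pair, the spatial cancellation $\hat f(0)=\sum_a f(a)=0$ forces the zeroth-order LLT contribution to vanish, so one reads a next-order difference; summing this over the intra-pair time gap reproduces exactly $\sigma_f^2$. Together with the $(2p-1)!!$ unordered pairings of $\{1,\dots,2p\}$, this gives
\[
n^{-p/4}\EE[\mathcal Z_n^{2p}]\longrightarrow(2p-1)!!\,\Big(\tfrac{\sigma_f^2}{\sigma_\xi}\Big)^p\EE[\mathcal L_1(0)^p],
\]
which matches $\EE\big[\big(\sqrt{\sigma_f^2\mathcal L_1(0)/\sigma_\xi}\,\mathcal N\big)^{2p}\big]$. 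Odd moments vanish because in any grouping of an odd number of times at least one factor is left unpaired, and its contribution is annihilated by $\hat f(0)=0$.

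\textbf{Main obstacle and special case.} The essential difficulty is the careful bookkeeping of errors: a naive LLT approximation of each configuration loses too large a factor, and one has to combine the cancellation from $\sum_a f(a)=0$ with a next-order Taylor expansion of the multi-time joint density in its spatial arguments at $0$. The assumption that $\xi_0$ admits a $(2+\kappa)$-th moment is precisely what yields a quantitative LLT with error $O(k^{-3/4-\kappa'})$ strong enough for this. The moments of the limit are finite by Theorem~\ref{propboundmoment}. Finally, the special case $f=\mathbf 1_{\{0\}}-\mathbf 1_{\{a\}}$ satisfies $\sum f=0$ and a direct calculation yields $\sigma_f^2=\sigma_{0,a}^2$, whence the stated CLT for $\mathcal N_n(a)-\mathcal N_n(0)$.
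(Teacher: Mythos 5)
Your outline follows the same two-pronged strategy as the paper: the summability is obtained from the Fourier/characteristic-function representation together with the first-order cancellation at $u=0$ coming from $\sum_a f(a)=0$ (this is the content of Lemma~\ref{1:Point2}), and the moment convergence is obtained by the moment method based on the multitime local limit theorem of \cite{BFFN1,BFFN2}, with the dominant configurations being $p$ well-separated pairs of nearby times, the within-pair correlation producing $\sigma_f^2$, the $(2p-1)!!$ pairing count, and suppression of configurations containing isolated times (this is Proposition~\ref{1:Point1} combined with Corollary~\ref{cvgcemoments}). At that level of description there is agreement, and the role you assign to the $(2+\kappa)$-moment assumption (quantitative Gaussian replacement of $\varphi_\xi$) is also the one it plays in the paper.

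There is, however, a genuine gap in the moment part: your treatment of the lattice period $d>1$. Since $\xi_0$ takes values in $\alpha+d\mathbb Z$, one has $Z_k\in k\alpha+d\mathbb Z$ almost surely, so for an isolated (unpaired) time $k_j$ the zeroth-order LLT contribution is proportional to $\sum_{a\in k_j\alpha+d\mathbb Z}f(a)$, which need \emph{not} vanish under the sole hypothesis $\sum_{a\in\mathbb Z}f(a)=0$; your claim that such a factor is ``annihilated by $\hat f(0)=0$'' is therefore false as stated when $d>1$. The same issue affects the within-pair computation: the leading pair term involves the coset-restricted sums $\mathcal A_{k,\ell}$ of Proposition~\ref{1:Point1}, and the series defining $\sigma_f^2$ in \eqref{eqsigma2} is only conditionally convergent for $d\ne 1$ (the paper makes this explicit with the Rademacher example). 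Recovering the cancellation and the constant $\sigma_f^2$ requires averaging the times over blocks of $d$ consecutive values (the sums over $k'_j=0,\dots,d-1$ in Proposition~\ref{1:Point1}) and, crucially, controlling how the joint density varies across such a block, including the collision events between the corresponding short stretches of the walk; this is where a substantial part of the paper's technical work lies (the decomposition of $F$ in Lemma~\ref{LEMsum0}, the collision estimate of Lemma~\ref{LEMsum0bis}, and the block structure in the proof of Corollary~\ref{cvgcemoments}). You do invoke the residue-averaging mechanism for the summability statement, but it is absent from your moment argument, where it is equally indispensable; for $d=1$ your sketch is the right outline, but for general $d$ the missing block-averaging idea is exactly what makes odd moments and isolated-time configurations negligible and what turns the coset-restricted pair sums into $\sigma_f^2$. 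A secondary, more minor point: the bound $|\mathbb E[e^{iuZ_k}\mid S]|\le\exp(-cu^2V_k)$ is only valid when all $|u N_k(y)|$ stay below a fixed threshold, so the intermediate range of $u$ needs the separate arguments of \cite{BFFN1,BFFN2} (as in Lemmas~\ref{LEM3} and~\ref{LEM4}), which you could cite but should not elide.
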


Let us point out the similarity beween these results and the classical Law of Large Numbers and Central Limit Theorem for sums of square integrable independent and identically distributed random variables.
Indeed Theorems \ref{LLN} and \ref{PAPATCL} establish convergence results of the respective following forms
\[
\frac 1{a_n}\sum_{k=1}^nY_k\rightarrow I(Y_1)\mathcal Y\quad \mbox{and}\quad
\frac 1{\sqrt{a_n}}\sum_{k=1}^n(Y_k-I(Y_1)Y_k^0)\rightarrow \sqrt{\sigma^2_Y\mathcal Y}\, \mathcal Z
\]
as $n\rightarrow +\infty$, with $a_n\rightarrow +\infty$, $I$ an integral (with respect to the counting measure on $\mathbb Z$) and $Y_k^0$ a reference random 
variable with integral 1 (e.g. $Y_k^0=\mathbf 1_0(Z_k)$, note that we cannot take $Y_k^0=1$ since it is not integrable with respect to the counting measure on $\mathbb Z$).

The summation order in the expression  \eqref{eqsigma2} of $\sigma^2_f$ is important. Indeed recall that $\mathbb P(Z_k=0)$ has order $k^{-\frac 34}$ and so is not summable.
The sum $\sum_{k\in\mathbb Z}$ appearing in \eqref{eqsigma2} is a priori non absolutely convergent if $d\ne 1$. Indeed, considering for example that $\xi_0$ is a centered Rademacher random variable (i.e. $\mathbb P(\xi_0=1)=\mathbb P(\xi_0=-1)=\frac 12$) and that $f=\mathbf 1_0-\mathbf 1_1$, then, for any $k\ge 0$,
\[\sum_{a,b\in\mathbb Z^2}f(a)f(b)\mathbb P(Z_{|2k|}=a-b)=\mathbb P(Z_{|2k|}=0-0)+\mathbb P(Z_{|2k|}=1-1)=2\mathbb P(Z_{|2k|}=0)\]
and
\[\sum_{a,b\in\mathbb Z^2}f(a)f(b)\mathbb P(Z_{|2k+1|}=a-b)=-\mathbb P(Z_{|2k+1|}=0-1)-\mathbb P(Z_{|2k+1|}=1-0)=-\mathbb P(|Z_{|2k+1|}|=1)\, .
\]
But, $\sigma^2_f$ corresponds to the following sum of an absolutely convergent series (in $k$):
\[
\sigma^2_f=\sum_{k\in\mathbb Z}\left(\sum_{\ell'=0}^{d-1}\sum_{a,b\in\mathbb Z^2}f(a)f(b)\mathbb P(Z_{|\ell'+dk|}=a-b)\right)\, .
\]

Finally, let us point out that $\sigma^2_f$ defined in \eqref{eqsigma2}
corresponds to the Green-Kubo formula, well-known to appear in central limit theorems for probability preserving dynamical systems (see Remark~\ref{exprsigma2} at the end of Section~\ref{proofLLN}). 


Let us indicate that results similar to Theorem \ref{PAPATCL} exist for one-dimensional random walks, that is when the RWRS $(Z_n)_{n\ge 1}$ is replaced by the RW $(S_n)_{n\ge 1}$, with other normalizations and with an exponential random variable instead of $\mathcal L_1(0)$. Such results have been
obtained by Dobru\v{s}in \cite{Dobrushin}, Kesten in \cite{Kesten} and by Cs\'aki and F\"oldes in \cite{CF1,CF2}. The idea used therein was to construct a coupling using the fact that the times between successive return times of $(S_n)_{n\ge 1}$ to 0 are i.i.d., as well as the partial sum of the $f(S_k)$ between these return times to 0 and that these random variables have regularly varying tail distributions.
This idea has been adapted to dynamical contexts by Thomine \cite{ThomineThese,Thomine1}. Still in dynamical contexts, another approach based on moments has been developed in \cite{PT1,PT2} in parallel to the coupling method.
This second method based on local limit theorem is well tailored to treat non-markovian situations, such as RWRS. Indeed, recall that the RWRS $(Z_n)_{n\ge 1}$ is (strongly) stationary but far to be not markovian
(for example it has been proved in \cite{BFFN2} that $Z_{n+m}-Z_n$ is more likely to be 0 if we know that $Z_n=0$)
and even more intricate conditionally to the scenery (it has been proved in \cite{GPHS} that the RWRS does not converge knowing the scenery). Luckily local limit theorem type estimates
enables to prove moments convergence. But unfortunately Theorem~\ref{propboundmoment} is not enough to conclude the convergence in distribution
via Carleman's criterion.

The paper is organized as follows.
In Section
\ref{boundmoment}, we prove Theorem \ref{propboundmoment} (bounds on moments of the local time of the Kesten Spitzer process) and Theorem~\ref{EstiVk} (estimate on the distance in $L^2(\mathbb R)$ between the local time of a Brownian motion and a $k$-dimensional vector space).
In Section \ref{proofLLN}, we 
establish the recurrence ergodicity of the infinite measure preserving
dynamical system $(\widetilde\Omega,\widetilde T,\widetilde\mu)$
and obtain the convergence in distribution of Theorem~\ref{LLN} (Law of Large Numbers) as a byproduct of
this recurrence ergodicity combined with \eqref{cvgeloi}. Section \ref{proofLLN} is completed
by Appendix~\ref{Appendcvgcemoment} which contains the proof of the moments convergence of Theorem~\ref{LLN}.
In Section~\ref{proofPAPATCL} (completed with Appendix \ref{AppendA}), we prove  Theorem~\ref{PAPATCL} (Central Limit Theorem). 

\section{Upper bound for moments: Proof of Theorem~\ref{propboundmoment}}\label{boundmoment}
This section is devoted to the study of the behaviour of $\mathbb E[(\mathcal L_1(0))^{m}]$ as $m\rightarrow +\infty$.
It has been proved in \cite{BFFN2} that this quantity is finite, but the estimate established therein was not enough to apply the Carleman criterion. The proof of Theorem \ref{propboundmoment} requires a much more delicate study, even if it uses some estimates used in  \cite{BFFN2}.
We start by establishing bounds for $\mathbb E[(\mathcal L_1(0))^{m}]$.
\begin{lem}\label{LEMME0}
\begin{equation}\label{FFF0AA}
\left(\mathbb E\left[\left|L_1
\right|_{L^2(\mathbb R)}^{-1}\right]\right)^m \frac {m!}{(2\pi \sigma^2_\xi)^{\frac m2}}\int_{0<t_1<...<t_m<1}
 \prod_{k=0}^{m-1}(t_{k+1}-t_k)^{-\frac 34} \, dt_1...dt_m\le \mathbb E[(\mathcal L_1(0))^{m}]
\end{equation}
and
\begin{equation}\label{FFF0}
\mathbb E[(\mathcal L_1(0))^{m}]\le \prod_{j=0}^{m-1}\left(\sup_{V\in \mathcal V_k}\mathbb E\left[\left( d\left(L_{1},V\right)\right)^{-1}\right]\right)\frac {m!}{(2\pi \sigma^2_\xi)^{\frac m2}}\int_{0<t_1<...<t_m<1}
 \prod_{k=0}^{m-1}(t_{k+1}-t_k)^{-\frac 34}\, dt_1...dt_m\, ,
\end{equation}
where 
$d(f,g)=\vert f-g\vert_{L^2(\mathbb R)}$ and where $\mathcal V_k$ is the set of linear subspaces  of $L^2(\mathbb R)$ of dimension at most $k$.
\end{lem}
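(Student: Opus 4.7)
The plan is to deduce Lemma~\ref{LEMME0} from the integral representation
\[
\mathbb E[(\mathcal L_1(0))^{m}]=\frac{m!}{(2\pi\sigma_\xi^2)^{m/2}}\int_{0<t_1<\cdots<t_m<1}\prod_{k=0}^{m-1}(t_{k+1}-t_k)^{-3/4}\,\mathbb E\!\left[\prod_{k=0}^{m-1}d(L^{(k+1)},W_k)^{-1}\right]dt_1\cdots dt_m,
\]
already displayed in the introduction. It suffices to prove, uniformly in $0<t_1<\cdots<t_m<1$, the two-sided estimate
\[
\bigl(\mathbb E[\|L_1\|_{L^2}^{-1}]\bigr)^m\ \le\ \mathbb E\!\left[\prod_{k=0}^{m-1}d(L^{(k+1)},W_k)^{-1}\right]\ \le\ \prod_{k=0}^{m-1}\sup_{V\in\mathcal V_k}\mathbb E[d(L_1,V)^{-1}].
\]
Both bounds rely on the Markov property of $B$ at each time $t_k$ (independence of Brownian increments on disjoint intervals) combined with the scaling identity $L_t(\cdot)\stackrel{d}{=}\sqrt{t}\,L_1(\cdot/\sqrt{t})$ for Brownian local time.

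The key structural step is this: setting $B'_s:=B_{t_k+s}-B_{t_k}$ and letting $L'$ be its local time, the identity $L_{t_{k+1}}(x)-L_{t_k}(x)=L'_{t_{k+1}-t_k}(x-B_{t_k})$ holds, with $B'$ (and hence $L'$) independent of $\mathcal F_{t_k}$. The change of variable $y=(x-B_{t_k})/\sqrt{t_{k+1}-t_k}$ combined with Brownian scaling then yields, for every $w\in L^2(\mathbb R)$,
\[
\|L^{(k+1)}-w\|_{L^2}^2=\|\tilde L_1-\tilde w\|_{L^2}^2,\qquad \tilde w(y):=(t_{k+1}-t_k)^{1/4}w\bigl(B_{t_k}+y\sqrt{t_{k+1}-t_k}\bigr),
\]
where $\tilde L_1$ is the local time at time $1$ of a standard Brownian motion independent of $\mathcal F_{t_k}$. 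Since $w\mapsto\tilde w$ is a linear bijection of $L^2(\mathbb R)$, every $\mathcal F_{t_k}$-measurable subspace $W_k$ of dimension at most $k$ is mapped to an $\mathcal F_{t_k}$-measurable subspace $\tilde W_k$ of the same dimension. Consequently, conditionally on $\mathcal F_{t_k}$, $d(L^{(k+1)},W_k)$ has the same distribution as $d(L_1,V)\big|_{V=\tilde W_k}$ with $L_1$ a fresh draw independent of $\tilde W_k$.

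This yields both bounds. For the upper bound \eqref{FFF0}, conditioning on $\mathcal F_{t_k}$ gives
\[
\mathbb E\!\left[d(L^{(k+1)},W_k)^{-1}\,\Big|\,\mathcal F_{t_k}\right]\le \sup_{V\in\mathcal V_k}\mathbb E[d(L_1,V)^{-1}],
\]
and iterating the tower property for $k=m-1,m-2,\dots,0$ peels off one factor at a time and produces the product. For the lower bound \eqref{FFF0AA}, note that $0\in W_k$ gives $d(L^{(k+1)},W_k)\le\|L^{(k+1)}\|_{L^2}$, and the same change of variables gives $\|L^{(k+1)}\|_{L^2}\stackrel{d}{=}\|L_1\|_{L^2}$. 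Moreover, by translation invariance of the $L^2$-norm, $\|L^{(k+1)}\|_{L^2}$ depends only on the increments of $B$ on $[t_k,t_{k+1}]$, so the variables $\|L^{(1)}\|_{L^2},\dots,\|L^{(m)}\|_{L^2}$ are mutually independent and
\[
\mathbb E\!\left[\prod_{k=0}^{m-1}d(L^{(k+1)},W_k)^{-1}\right]\ge\prod_{k=0}^{m-1}\mathbb E[\|L^{(k+1)}\|_{L^2}^{-1}]=\bigl(\mathbb E[\|L_1\|_{L^2}^{-1}]\bigr)^m.
\]

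The only step requiring care is checking that the change-of-variable map really sends $k$-dimensional subspaces to $k$-dimensional subspaces; this is automatic since the map is $\mathbb R$-linear and invertible on $L^2(\mathbb R)$. Everything else is routine conditioning and scaling, and the finiteness of $\mathbb E[\|L_1\|_{L^2}^{-1}]$ needed on the lower-bound side is classical.
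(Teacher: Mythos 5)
Your argument is correct and follows essentially the same route as the paper: condition on $\mathcal F_{t_k}$, use independence and stationarity of the Brownian increments together with the scaling $L_u(\cdot)\stackrel{d}{=}\sqrt u\,L_1(\cdot/\sqrt u)$ to turn each conditional factor into $d(L_1,V)$ for an $\mathcal F_{t_k}$-measurable subspace $V$ of dimension at most $k$, then take the supremum (respectively bound the distance by the norm, using $0\in W_k$) and iterate the tower property. The only difference is cosmetic: you start from the normalized representation displayed in the introduction, thereby taking the Gram-determinant factorization \eqref{FFF-1} of $\det\mathcal D_{t_1,\dots,t_m}^{-\frac 12}$ for granted, whereas the paper derives that representation from \cite{BFFN2} and performs the $(t_{k+1}-t_k)^{-\frac 34}$ rescaling at the end rather than at the start.
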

\begin{proof}
Recall that it has been proved  in \cite[Theorem 3]{BFFN2} that
\begin{equation}\label{FFF}
\mathbb E[(\mathcal L_1(0))^{m}]=\frac {m!}{(2\pi \sigma^2_\xi)^{\frac m2}}\int_{0<t_1<...<t_m<1}\mathbb E[(\det \mathcal D_{t_1,...,t_m})^{-\frac 12}]\, dt_1...dt_m\, ,
\end{equation}
with $ \mathcal D_{t_1,...,t_m}:=\left(\int_{\mathbb R} L_{t_i}(x)L_{t_j}(x)\, dx\right)_{i,j=1,...,m}$ where $(L_t(x))_{t\ge 0,x\in\mathbb R}$ is the local time
of the Brownian motion $B$.
Since $\det \mathcal D_{t_1,...,t_m}$ is a Gram determinant, we have the iterative relation
\[
\det \mathcal D_{t_1,...,t_{m+1}}^{\frac 12}=\det \mathcal D_{t_1,...,t_{m}}^{\frac 12}d(L_{t_{m+1}},Vect(L_{t_1},...,L_{t_m}))\, ,
\]
where $d(f,g)=\Vert f-g\Vert_{L^2(\mathbb R)}$ and where $Vect(L_{t_1},...,L_{t_m})$ is the sublinear space of $L^2(\mathbb R)$ generated
by $L_{t_1},...,L_{t_m}$. It follows that
\begin{equation}\label{FFF-1}
\det \mathcal D_{t_1,...,t_{m}}^{-\frac 12}=\prod_{k=0}^{m-1}\left(d(L_{t_{k+1}},Vect(L_{t_1},...,L_{t_k}))\right)^{-1}\, .
\end{equation}
But, for any $m\ge 1$ and any $0< t_1<...<t_{m+1}<1$ and any $k=0,...,m-1$,
\begin{align*}
\nonumber&\mathbb E\left[\left.d\left(L_{t_{k+1}},Vect(L_{t_1},...,L_{t_k})\right)^{-1}\right|(B_s)_{s\le t_k}\right]\\
\nonumber&=\mathbb E\left[\left.d\left(L_{t_{k+1}}-L_{t_k},Vect(L_{t_1},...,L_{t_k})\right)^{-1}\right|(B_s)_{s\le t_k}\right]\\
\nonumber&=\mathbb E\left[\left.d\left((L_{t_{k+1}}-L_{t_k})(B_{t_k}+\cdot),Vect(L_{t_1}(B_{t_k}+\cdot),...,L_{t_k}(B_{t_k}+\cdot))\right)^{-1}\right|(B_s)_{s\le t_k}\right]\, .
\end{align*}
Therefore
\begin{equation}
\mathbb E\left[\left|L_{t_{k+1}}-L_{t_k}
\right|_{L^2(\mathbb R)}^{-1}\right]
\le\mathbb E\left[\left.d\left(L_{t_{k+1}},Vect(L_{t_1},...,L_{t_k})\right)^{-1}\right|(B_s)_{s\le t_k}\right]
\label{FFF-2mino}
\end{equation}
and
\begin{equation}
\mathbb E\left[\left.d\left(L_{t_{k+1}},Vect(L_{t_1},...,L_{t_k})\right)^{-1}\right|(B_s)_{s\le t_k}\right]\le  \sup_{V\in\mathcal V_k}\mathbb E\left[d\left((L_{t_{k+1}}-L_{t_k})(B_{t_k}+\cdot),V\right)^{-1}\right]\, ,\label{FFF-2}
\end{equation}
where $\mathcal V_k$ is the set of linear subspaces of dimension at most $k$ of $L^2(\mathbb R)$ and 
where we used the independence of $(L_{t_{k+1}}-L_{t_k})(B_{t_k}+\cdot)$
with respect to $(B_s)_{s\le t_k}$ and the fact that $(L_{t_1}(B_{t_k}+\cdot),...,L_{t_k}(B_{t_k}+\cdot))$ is measurable with respect to $(B_s)_{s\le t_k}$.
Thus, by induction and using the fact that the increments of $B$ are (strongly) stationary, it follows from \eqref{FFF-1} and \eqref{FFF-2} that
\begin{align}
\nonumber
\prod_{k=0}^{m-1}\mathbb E\left[\left|L_{t_{k+1}}-L_{t_k}
\right|_{L^2(\mathbb R)}^{-1}\right]
\le \mathbb E\left[\det \mathcal D_{t_1,...,t_{m}}^{-\frac 12}\right]
&\le \prod_{k=0}^{m-1}\sup_{V\in \mathcal V_k}\mathbb E\left[\left( d\left((L_{t_{k+1}}-L_{t_k})(B_{t_k}+\cdot),V\right)\right)^{-1}\right]\\
&= \prod_{k=0}^{m-1}\sup_{V\in \mathcal V_k}\mathbb E\left[\left( d\left(L_{t_{k+1}-t_k},V\right)\right)^{-1}\right]\, ,\label{FFF-3}
\end{align}
with the convention $t_0=0$. Recall that $(L_u(x))_{x\in\mathbb R}$ has the same distribution $(\sqrt{u}L_1(x/\sqrt{u}))_{x\in\mathbb R}$ and so 
$
\left(d(L_u,Vect(g_1,...,g_k))\right)^2$ has the same distribution as 
\begin{align*}
\min_{a_1,...,a_k}\int_\mathbb R \left(\sqrt{u}L_1\left(\frac x{\sqrt{u}}\right)-\sum_{i=1}^ka_ig_i(x)\right)^2\, dx
&=
u\min_{a'_1,...,a'_k}
\int_\mathbb R \left(L_1\left(\frac x{\sqrt{u}}\right)-\sum_{i=1}^ka'_ig_i(x)\right)^2\, dx\\
&= u^{\frac 32}
\min_{a'_1,...,a'_k}
\int_\mathbb R \left(L_1\left(y \right)-\sum_{i=1}^ka'_ig_i(\sqrt{u}y)\right)^2\, dy\\
&=  u^{\frac 32} \left(d(L_1,Vect(h_1,...,h_k))\right)^2
\end{align*}
setting $a'_i:=a_i/\sqrt{u}$, and making the change of variable $y=x/\sqrt{u}$, with
$h_i(x)=g_i(\sqrt{u}x)$
and so \eqref{FFF-3} becomes
\begin{equation}
\prod_{k=0}^{m-1}\left((t_{k+1}-t_k)^{-\frac 34}\mathbb E\left[\left|L_1
\right|_{L^2(\mathbb R)}^{-1}\right]\right) \le
\mathbb E\left[\det \mathcal D_{t_1,...,t_{m}}^{-\frac 12}\right]
\le \prod_{k=0}^{m-1}(t_{k+1}-t_k)^{-\frac 34}\sup_{V\in \mathcal V_k}\mathbb E\left[\left( d\left(L_{1},V\right)\right)^{-1}\right]\,  ,\label{FFF0bis}
\end{equation}
which ends the proof of the lemma.
\end{proof}
We first study the behaviour, as $m\rightarrow +\infty$, of the integral appearing in Lemma~\ref{LEMME0}.
\begin{lem}\label{LEMME00}
\[
m!\, \int_{0<t_1<...<t_{m}<1}   \prod_{k=0}^{m-1}(t_{k+1}-t_k)^{-\frac 34}\, dt_1...dt_{m}=\frac{m!\, \Gamma(\frac 14)^m}{\Gamma(\frac{m}4+1)}\sim c(Cm)^{\frac {3m}4}\, ,
\]
as $m\rightarrow +\infty$.
\end{lem}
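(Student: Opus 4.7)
The plan is to evaluate the integral in closed form via a standard simplex substitution, then apply Stirling's formula to extract the asymptotic.

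First I would set $t_0 := 0$ and perform the triangular change of variables $u_k := t_{k+1} - t_k$ for $k = 0, \ldots, m-1$. The Jacobian is $\pm 1$, and the domain $\{0 < t_1 < \cdots < t_m < 1\}$ becomes the open simplex $\{u_k > 0,\; \sum_{k=0}^{m-1} u_k < 1\}$, so that
\[
\int_{0<t_1<\cdots<t_m<1}\prod_{k=0}^{m-1}(t_{k+1}-t_k)^{-3/4}\, dt_1\cdots dt_m \;=\; \int_{\{u_k>0,\; \sum u_k<1\}} \prod_{k=0}^{m-1} u_k^{-3/4}\, du_0\cdots du_{m-1}.
\]
I would then recognize the right-hand side as a Dirichlet integral with exponents $a_k = 1/4$ for $k=0,\dots,m-1$ and no factor in $(1 - \sum u_k)$ (equivalent to an additional parameter equal to $1$). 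The Dirichlet formula yields the value $\Gamma(1/4)^m/\Gamma(m/4+1)$, producing the announced closed form $m!\,\Gamma(1/4)^m/\Gamma(m/4+1)$. If one prefers not to quote Dirichlet, the identity follows by a short induction on $m$, integrating $u_{m-1}$ first via a Beta-function computation.

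For the asymptotic I would apply Stirling: $m! \sim \sqrt{2\pi m}\,(m/e)^m$ and $\Gamma(m/4+1) \sim \sqrt{\pi m/2}\,(m/(4e))^{m/4}$. Taking the ratio yields $m!/\Gamma(m/4+1) \sim 2\cdot 4^{m/4}\, e^{-3m/4}\, m^{3m/4}$, whence
\[
\frac{m!\,\Gamma(1/4)^m}{\Gamma(m/4+1)} \sim 2\left(\Gamma(1/4)\, 4^{1/4}\, e^{-3/4}\right)^m m^{3m/4} = c\,(Cm)^{3m/4}
\]
with $c = 2$ and $C = \Gamma(1/4)^{4/3}\, 4^{1/3}/e$. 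No substantial obstacle is expected; the only bookkeeping subtlety is making sure no factor $(1 - \sum u_k)$ appears (since $t_m$ is free in $(0,1)$ rather than being fixed at $1$, the ``missing'' Dirichlet parameter equals $1$ and contributes only a $\Gamma(1)=1$ to the numerator), and carrying the constants cleanly through the Stirling comparison.
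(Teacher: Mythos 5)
Your proposal is correct and follows essentially the same route as the paper: the simplex change of variables $u_k=t_{k+1}-t_k$, evaluation of the resulting Dirichlet integral (the paper does this by the same Beta-function induction you mention as an alternative), giving $\Gamma(1/4)^m/\Gamma(m/4+1)$, and then Stirling's formula for the asymptotics. Your explicit constants $c=2$ and $C=\Gamma(1/4)^{4/3}4^{1/3}/e$ are consistent with the paper's statement.
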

\begin{proof}
\begin{align*}
a_{m+1}&:=\int_{0<t_1<...<t_{m+1}<1}   \prod_{k=0}^{m}(t_{k+1}-t_k)^{-\frac 34}\, dt_1...dt_{m+1}\\
&=\int_{x_i>0\, :\, x_1+...+x_{m+1}<1}   \prod_{k=1}^{m+1}x_k^{-\frac 34}\, dx_1...dx_{m+1}\\
&=\int_0^1 x_{m+1}^{-\frac 34}(1-x_{m+1})^{-\frac {3m}4}\left(\int_{x_i>0\, :\, x_1+...+x_{m}<1-x_{m+1}}   \prod_{k=1}^{m}(x_k/(1-x_{m+1}))^{-\frac 34}\, dx_1...dx_m\right)dx_{m+1}\\
&=\int_0^1 x_{m+1}^{-\frac 34}(1-x_{m+1})^{\frac m4 }  \left(\int_{u_i>0\, :\, u_1+...+u_{m}<1}   \prod_{k=1}^{m}u_k^{-\frac 34}\, du_1...du_m\right)dx_{m+1}\\
&=a_m\int_0^1 x_{m+1}^{-\frac 34}(1-x_{m+1})^{\frac m4 } dx_{m+1}=a_mB\left(\frac 14,\frac m4+1\right)=a_m\frac{\Gamma(\frac 14)\Gamma(\frac m4+1)}{\Gamma(\frac{m+1}4+1)}\, ,
\end{align*}
where $B(\cdot,\cdot)$ and $\Gamma$ stand respectively for Euler's Beta and Gamma functions, and so, by induction, $ a_m=\frac{\Gamma(1/4)^m}{\Gamma(\frac m4+1)}$ proving the first point of the lemma. Moreover
\begin{align*}
m! a_m&\sim\left(\Gamma(1/4)\right)^m m^{m+\frac 12} (m+4)^{-\frac m4-\frac 12}   4^{\frac m4+\frac 12}e^{-\frac {3m}4+1}\, ,
\end{align*}
where we used the Stirling formulas $m!=\Gamma(m+1)$ and $\Gamma(z)\sim\sqrt{2\pi}z^{z-\frac 12}e^{-z}$. This ends the proof of the lemma.
\end{proof}
Observe that $\mathbb E\left[| L_{1}|_{L^2(\mathbb R)}^{-1}\right]>0$.
Thus, the proof of Theorem~\ref{propboundmoment} will be be deduced from the two previous lemmas combined with Theorem~\ref{EstiVk}, which can be rewritten as follows 
\begin{equation}\label{ESTIMk}
\forall \eta_0>0,\ \exists C>1,\ \forall k\in\mathbb N^*,\quad C^{-1}k^{\frac 12-\eta_0}\le \sup_{V\in \mathcal V_k}\mathbb E\left[\left( d\left(L_{1},V)\right)\right)^{-1}\right]\le C k
^{\frac 12+
\eta_0}\, .
\end{equation}
Due to  \cite[Cor. (1.8) of Chap. VI, Theorem (2.1) of Chap. I]{RY}, $L_1$ is almost surely H\"older continuous of order
$\frac 12-\eta_0$ and its H\"older constant admits moments of any order.
The lower bound of theorem~\ref{EstiVk} follows directly from this fact.
\begin{proof}[Proof of the lower bound of Theorem~\ref{EstiVk}]
We prove the lower bound of \eqref{ESTIMk}. Let $\eta_0\in(0,\frac 12)$.
Let $\mathcal C_1$ be the H\"older constant of order $\frac 12-\eta_0$ of $L_1$.
Let $V_k$ be the linear subspace of $L^2(\mathbb R)$ generated by the set
\[
\left\{\mathbf 1_{[m/k,(m+1)/k]},\ m=-\left\lfloor\frac{k}2\right\rfloor,...,\left\lceil\frac{k}2\right\rceil-1\right\}\, ,
\]
and consider $\widetilde L_k\in V_k$ given by
\[
\widetilde L_k:=\sum_{m=-\lfloor \frac k2\rfloor}^{\lceil \frac k2\rceil-1} L_1\left(\frac mk\right)\mathbf 1_{[\frac mk,\frac{m+1}k)}\, .
\]
Let $K_0>0$.
We will use the fact that
\[
\mathbb E\left[\left( d\left(L_{1},V_k\right)\right)^{-1}\right]\ge
\mathbb E\left[\left( d\left(L_{1},V_k\right)\right)^{-1}\mathbf 1_{\{\mathcal C_1\le
K_0
,\ \sup_{[0,1]}|B|\le \frac{k-1}{2k}\}}\right]\, .
\]
Observe that, if $\sup_{[0,1]}|B|\ge \frac{k-1}{2k}$ and $\mathcal C_1\le
K_0
$, then
\begin{align*}
 d\left(L_{1},V_k\right)^2&\le d(L_1,\widetilde L_k)^2=\sum_{m=\lfloor \frac k2\rfloor}
^{\lceil\frac k2\rceil-1}\int_{\frac mk}^{\frac{m+1}k}(L_1(u)-L_1(m/k))^2\, du\\
&\le  \sum_{m=\lfloor \frac k2\rfloor}
^{\lceil\frac k2\rceil-1}k^{-1}\left( 
K_0
k^{-\frac 1 2+ \eta_0}\right)^2
\le\left( 
K_0
k^{-\frac 1 2+ \eta_0}\right)^2\, .
\end{align*}
Thus
\begin{align*}
\mathbb E\left[\left( d\left(L_{1},V_k\right)\right)^{-1}\right]&\ge \mathbb E\left[\left( d\left(L_{1},V_k\right)\right)^{-1}\mathbf 1_{\{\mathcal C_1\le K_0,\ \sup_{[0,1]}|B|\le  \frac{k-1}{2k}\}}\right]\\
&\ge \mathbb E\left[\left(K_0 k^{-\frac 12+\eta_0}\right)^{-1}\mathbf 1_{\{\mathcal C_1\le K_0,\ \sup_{[0,1]}|B|\le  \frac{k-1}{2k}\}}\right]\\
&\ge  K_0^{-1} k^{\frac 12-\eta_0}\, \mathbb P\left(\mathcal C_1\le K_0,\ \sup_{[0,1]}|B|\le  \frac 13\right)\, .
\end{align*}

\end{proof}
The rest of this section is devoted to the proof of the upper bound of  Theorem~\ref{EstiVk} (i.e. the upper bound of \eqref{ESTIMk}), which is much more delicate to establish. To this end, we will prove a sequence of estimates.
Let us first introduce the quantities used in this proof.
We fix $\eta_0>0$ and
$
d=\frac 12+\eta_0>1/2
$.
Choose $\epsilon_0\in(0,\frac 1{10})$ such that 
\begin{equation}\label{hypd}
d>\frac {1+\epsilon_0}{2(1-\epsilon_0)}\, .
\end{equation}
Fix $a,b,\eta,\gamma\in(0,\frac 1{10})$ such that $0<\frac b8<\frac a2
$
and 
small enough so that
\begin{equation}\label{hypab}
 \frac{(1+\gamma)(1+\epsilon_0)}2+\frac a2+\frac b8 <1
\end{equation}
and
\begin{equation}\label{hypeta}
(2d(1-\epsilon_0)-1-\epsilon_0)(1-2\eta) -8\eta>0\, .
\end{equation}
Let $\theta>0$ such that
$(1-2\eta)\theta>1$ and
\begin{equation}\label{hyptheta}
1-\frac b4-\frac{(1+\gamma)(1+\epsilon_0)}2<\theta(1-2\eta)\left( 1-\frac{(1+\gamma)(1+\epsilon_0)}2-\frac a2-\frac b8 \right)
\, 
\end{equation}
and
\begin{equation}\label{hypthetabis}
(1-\epsilon_0)(1+2d)<\theta\left[(2d\left(1-\epsilon_0\right)-1-\epsilon_0)(1-2\eta) -8\eta\right] \, .
\end{equation}
The existence of such a $\theta$ is ensured by \eqref{hypab} and \eqref{hypeta}.
Fix then $K$ such that $\frac 1{4a- b}<K
$ and $v_0=\lceil 16/ b\rceil$.
We will also consider the following quantities which will depend on $k\ge 1$. 
We set $M:=\lceil \theta k\rceil$
and $M':=M^d$.
For $x>M'$, we also set: 
\begin{equation}\label{r0x0x1}
r_0:=
(x/M')^{-(1+\gamma)(1+\epsilon_0)}M^{-\frac {1+\epsilon_0}2}{M'}^{-1-\epsilon_0
},\quad x_0=(x/M')^aM,\quad
x_1=(x/M')^b\, .
\end{equation}
Let  $V$ be a linear space generated by $g_1,...,g_k\in L^2(\mathbb R)$. 
Observe that 
\begin{align}
\nonumber\mathbb E\left[\left( d\left(L_{1},V)\right)\right)^{-1}\right]&=\int_0^\infty
\mathbb P\left(\left( d\left(L_{1},V)\right)\right)^{-1}>x\right)\, dx\\
&=\mathcal O(M')+\int_{M'}^\infty
\mathbb P\left( d\left(L_{1},V)\right)<x^{-1}
\right)\, dx\, .\label{AABBCC}
\end{align}
\begin{lem}
Uniformly on $x>M'$:
\begin{align}
\nonumber&\mathbb P\left( d\left(L_{1},V)\right)<x^{-1}
\right)\\
&\ \ \ \ \ \ \le \mathcal O\left((x/M')^{-2}\right)+
\mathbb P\left(\forall \ell=-v_0,...,v_0,\ 
D\left(\left(L_{1}\left(\ell x_1^{-\frac 18}+\frac n{x_0}\right)\right)_{n=1,...,M},W^{(\ell x_1^{-\frac 18})}_V\right)< 2 x^{-1}r_0^{-\frac 12}
\right)\, .\label{FFF1}
\end{align}
where $W^{(y_0)}_V:=Span\left(\left(\int_{y_0
+n/x_0}^{y_0
+(n+1)/x_0
}g_j(y')\, dy'\right)_{n =1,...,M},\ 1,...,k
\right) \subset\mathbb R^M$ and where $D$ is the usual euclidean metric  in $\mathbb R^{M}$.
\end{lem}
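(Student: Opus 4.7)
The plan is to reduce the continuous event $\{d(L_1,V)<x^{-1}\}$ to the discrete sampling event on the right of \eqref{FFF1}, at the cost of an exceptional probability $\mathcal O((x/M')^{-2})$. First I would introduce a good event $E$ on which (i) the $(\tfrac12-\eta_0)$-Hölder constant $\mathcal C_1$ of $L_1$ and (ii) $\sup_{[0,1]}|B|$ are both at most appropriate polynomial powers of $x/M'$. Using that $\mathcal C_1$ has finite moments of every order (by \cite[Cor.~(1.8) Chap.~VI, Thm.~(2.1) Chap.~I]{RY}) and that $\sup|B|$ has Gaussian tails, the exponents can be chosen so that $\mathbb P(E^c)=\mathcal O((x/M')^{-2})$; the algebraic conditions \eqref{hypd} and \eqref{hypthetabis} are precisely what make these polynomial rates compatible with the remaining scale parameters.

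On $E\cap\{d(L_1,V)<x^{-1}\}$ I then pick coefficients $a_1,\dots,a_k$ with $\|r\|_{L^2(\mathbb R)}<x^{-1}$, where $r:=L_1-\sum_j a_jg_j$. For each $\ell\in\{-v_0,\dots,v_0\}$ set $y_0:=\ell x_1^{-1/8}$ and take the candidate element of $W^{(y_0)}_V$ to be $w_n:=x_0\sum_{j=1}^k a_j\int_{y_0+n/x_0}^{y_0+(n+1)/x_0}g_j(y')\,dy'$. Decomposing
\begin{equation*}
L_1(y_0+n/x_0)-w_n=\Bigl(L_1(y_0+n/x_0)-x_0\!\int_{y_0+n/x_0}^{y_0+(n+1)/x_0}\!L_1(y')\,dy'\Bigr)+x_0\!\int_{y_0+n/x_0}^{y_0+(n+1)/x_0}\!r(y')\,dy',
\end{equation*}
the Hölder bound controls the first piece pointwise by $\mathcal C_1\,x_0^{-(1/2-\eta_0)}$, contributing an $\ell^2$-norm over $n=1,\dots,M$ of at most $\mathcal C_1\sqrt M\,x_0^{-(1/2-\eta_0)}$, while Cauchy--Schwarz bounds the $\ell^2$-norm of the second piece by $x_0^{1/2}\|r\|_{L^2(\mathbb R)}<x_0^{1/2}x^{-1}$.

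Plugging in the definitions \eqref{r0x0x1} of $r_0,x_0,x_1$, the inequalities \eqref{hypab}--\eqref{hypthetabis} ensure that on $E$ both error terms are separately dominated by $x^{-1}r_0^{-1/2}$; summing yields $D\bigl((L_1(y_0+n/x_0))_n,W^{(y_0)}_V\bigr)<2x^{-1}r_0^{-1/2}$ for every $\ell$ simultaneously, giving the inclusion of events that drives \eqref{FFF1}. The main obstacle is the simultaneous calibration: the Hölder contribution $\mathcal C_1\sqrt M\,x_0^{-(1/2-\eta_0)}$ grows with $M=\lceil\theta k\rceil$ and with the tolerated size of $\mathcal C_1$, so the thresholds defining $E$, the dimension scale $M'=M^d$ and the exponents in \eqref{r0x0x1} must be balanced delicately; this balance is exactly what forces the specific algebraic constraints \eqref{hypab}--\eqref{hypthetabis} imposed on $a,b,\gamma,\eta,\theta$ and $\epsilon_0$.
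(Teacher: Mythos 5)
Your reduction has the right shape (good event controlling the H\"older constant, then a deterministic comparison between $L_1$ sampled at the points $y_0+n/x_0$ and an element of $W_V^{(y_0)}$), but the final calibration claim is false, and it cannot be fixed by re-tuning the good event. Because you compare the sampled value with the average of $L_1$ over the \emph{whole} interval of length $1/x_0$, your first error term is of size $\mathcal C_1\sqrt M\,x_0^{-h}$ with $h<\tfrac12$ the H\"older exponent. Writing $X=x/M'$ and using \eqref{r0x0x1}, the threshold you must beat is
$x^{-1}r_0^{-1/2}=X^{\frac{(1+\gamma)(1+\epsilon_0)}2-1}M^{\frac{1+\epsilon_0}4}(M')^{\frac{1+\epsilon_0}2-1}$,
whose $X$-exponent is $\le -\tfrac12+$ small (by \eqref{hypab}) and whose $M$-exponent is strictly negative (by \eqref{hypd}), while your H\"older term, even on an event $\{\mathcal C_1\le X^{\gamma'}\}$, is $X^{\gamma'-ah}M^{\frac12-h}$ with $X$-exponent $\ge -\tfrac a2\ge -\tfrac1{20}$ and positive $M$-exponent. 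So the required domination fails by a polynomial factor as $X\to\infty$ (e.g.\ $X=M^{100}$, $a=\tfrac1{20}$: the H\"older piece is about $M^{-2.5}$ versus a threshold of about $M^{-50}$). Equivalently, your inequality would force $\mathcal C_1\lesssim X^{-c}$ for some fixed $c>0$, and since $\mathcal C_1>0$ a.s.\ no event of probability $1-\mathcal O(X^{-2})$ can guarantee that uniformly in $x>M'$; the discrepancy is intrinsic, because increments of $L_1$ over an interval of length $1/x_0$ really are of order $x_0^{-1/2}$, so the sampled vector and the vector of $1/x_0$-averages genuinely differ by about $\sqrt M\,x_0^{-1/2}$ in $\mathbb R^M$. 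The conditions \eqref{hypab}--\eqref{hypthetabis} do not rescue this; \eqref{hypthetabis} only enters the later integration over $x$, not this lemma.

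The paper's proof avoids the problem by never comparing at scale $1/x_0$: starting from $x^{-2}>\|L_1-\sum_ja_jg_j\|_{L^2(\mathbb R)}^2$, it restricts the integral to the $M$ tiny sub-intervals $[y_0+\tfrac n{x_0},\,y_0+\tfrac n{x_0}+r_0]$ of length $r_0\le 1/x_0$, replaces $L_1(y)$ there by $L_1(y_0+n/x_0)$ with total error $\sqrt M\,(x/M')^{\gamma}r_0^{\frac12+u}$, which equals \emph{exactly} $x^{-1}$ by the very definition of $r_0$ in \eqref{r0x0x1} (this is what $r_0$ is for), and only then passes to averages over these $r_0$-intervals and divides by $\sqrt{r_0}$, producing the threshold $2x^{-1}r_0^{-1/2}$ of \eqref{FFF1}. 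Two minor remarks: your second error term is fine ($x_0^{1/2}x^{-1}\le x^{-1}r_0^{-1/2}$ since $r_0x_0\le1$), and the extra control of $\sup_{[0,1]}|B|$ in your good event is not needed for this lemma. Also note that the paper's argument in fact bounds the distance to the span of the $r_0$-averages rather than the $1/x_0$-averages appearing in the displayed definition of $W^{(y_0)}_V$; this is harmless for the sequel, where only $\dim W\le k$ matters through $\sup_W$, but it may be what led you to work at scale $1/x_0$, which is precisely the scale at which the H\"older estimate is too weak.
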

\begin{proof}
We set
\[
\mathcal C_1:=\sup_{y,z\in\mathbb R\, :\, y\ne z}\frac{|L_1(y)-L_1(z)|}{|y-z|^u},\quad \mbox{with}\ u:=\frac 1{1+\epsilon_0}-\frac 12\, .
\]
Since $\mathcal C_1$ admits moments of every  order, it follows that
\[
\mathbb P\left( d\left(L_{1},V\right)<1/x\right)\le \mathbb P\left( d\left(L_{1},V)\right)<1/x,\ 
\mathcal C_1\le (x/M')^{\gamma}\right)
+\mathcal O((x/M')^{-2})
\, ,\]
Note that, if $x>M'$, then
\[
r_0x_0=(x/M')^{a-(1+\gamma)(1+\epsilon_0)}M^{\frac {1-\epsilon_0}2}{M'}^{-1-\epsilon_0
}\le 1\, ,
\]
as soon as $x>M'$, since $a<1<(1+\gamma)(1+\epsilon_0)$ and since
$M'=M^d$ with $\frac 12\le d$, and so $r_0\le x_0^{-1}$.
Assume moreover that $d(L_1,V)<1/x$ and $\mathcal C_1\le x^{\gamma}$.
Let $a_j$ such that $d\left(L_{1},\sum_{j=1}^ka_jg_j\right)<x^{-1}$.
Then, for every $\ell\in\mathbb Z$, the following estimate holds true
\begin{align*}
x^{-1}&>\left(\sum_{n=1}^M\int_{\ell x_1^{-\frac 18}+\frac n{x_0}}^{\ell x_1^{-\frac 18}+\frac n{x_0}+r_0}\left(L_{1}(y)-\sum_{j=1}^ka_jg_j(y)\right)^2\, dy\right)^{\frac 12}\\
&\ge \left(\sum_{n=1}^M
\int_{\ell x_1^{-\frac 18}+\frac n{x_0}}^{\ell x_1^{-\frac 18}+\frac n{x_0}+r_0}\left(L_{1}\left(\ell x_1^{-\frac 18}+\frac n{x_0}\right)-\sum_{j=1}^ka_jg_j(y)\right)^2\, dy\right)^{\frac 12}
- \left(M r_0 (x/M')^{2\gamma} r_0^{2u}
\right)^{\frac 12}\\
&\ge \sqrt{r_0} 
D\left(\left(L_{1}\left(\ell x_1^{-\frac 18}+\frac n{x_0}\right)\right)_{n=1,...,M},W_V^{(\ell x_1^{-\frac 18})}
\right)-\sqrt{M} (x/M')^{\gamma} r_0^{\frac 12+u}\, .
\end{align*}
Since $\frac 12+u=\frac 1{1+\epsilon_0}$ and $r_0=
(x/M')^{-(1+\gamma)(1+\epsilon_0)}M^{-\frac {1+\epsilon_0}2}{M'}^{-1-\epsilon_0
}$, we conclude that $\sqrt{M} (x/M')^{\gamma} r_0^{\frac 12+u}=x^{-1}$ and so
\begin{align*}
\nonumber&\mathbb P\left( d\left(L_{1},V)\right)<1/x,\ \mathcal C_1\le x^{\gamma}\right)\\
&\ \ \ \ \ \ \le 
\mathbb P\left(\forall \ell=-v_0,...,v_0,\ 
D\left(\left(L_{1}\left(\ell x_1^{-\frac 18}+\frac n{x_0}\right)\right)_{n},W^{(\ell x_1^{-\frac 18})}_V\right)< 2 x^{-1}r_0^{-\frac 12}
\right)\, .
\end{align*}
\end{proof}
Recall that $v_0=\lceil 16/ b\rceil$. Set
\[
E_{0,W}:=\left\{D\left(\left(
x_0\int_{\frac n{x_0}}^{\frac {n+1}{x_0}
}Y'(y)\, dy\right)_{n=1,...,M},W\right)<2 x^{-1}\sqrt{x_0}
\right\}\, .
\]
\begin{lem}\label{LEMM1}
The following estimate holds true uniformly on $x>M$:
\begin{align}
\mathbb P\left( d\left(L_{1},V)\right)<x^{-1}
\right)\le \mathcal O\left((x/M')^{-2}\right)
+\sup_W\mathbb P\left(E_{0,W}
\right)\, ,
\end{align}
where $\sup_W$ means the supremum over the set of linear subspaces $W$ of $\mathbb R^M$ of dimension at most $k$ and where $Y'$ is a squared Bessel process of dimension 0 starting from $x_1^{-\frac14}$.
\end{lem}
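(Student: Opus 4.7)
The plan is to refine the discretization carried out in the preceding lemma by replacing the Hölder-based estimate on sub-intervals of length $r_0$ with a Cauchy--Schwarz estimate on the full intervals of length $1/x_0$; this produces the sharper threshold $2x^{-1}\sqrt{x_0}$ in place of $2x^{-1}r_0^{-1/2}$. Then I would pass from the local time $L_1$ to the squared Bessel process $Y'$ of dimension $0$ starting from $x_1^{-1/4}$ via a Ray--Knight type stochastic comparison, after using the freedom of the intersection over $\ell$ to select a convenient shift.

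Concretely, I would first discard the event $\{\mathcal C_1>(x/M')^\gamma\}$, which contributes $\mathcal O((x/M')^{-2})$ thanks to the moment bounds on the Hölder constant, and pick coefficients $a_j$ realising $d(L_1,V)<1/x$. Writing $I_n^{y_0}=[y_0+n/x_0,\,y_0+(n+1)/x_0]$ for any admissible shift $y_0$, the Cauchy--Schwarz inequality on each $I_n^{y_0}$ gives
\[
\int_{I_n^{y_0}} \bigl(L_1 - \textstyle\sum_j a_j g_j\bigr)^{2} \;\geq\; x_0 \Bigl(\int_{I_n^{y_0}} L_1 - \sum_j a_j \int_{I_n^{y_0}} g_j\Bigr)^{\!2},
\]
and summing over $n=1,\dots,M$ yields $D\bigl((x_0\!\int_{I_n^{y_0}}L_1)_n,W_V^{(y_0)}\bigr)<2x^{-1}\sqrt{x_0}$, exactly matching the threshold appearing in $E_{0,W}$. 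Next, to replace $L_1$ by $Y'$, I would bound the intersection $\bigcap_\ell$ (as in the previous lemma) by the event corresponding to a single $\ell_0\in\{-v_0,\dots,v_0\}$ for which $L_1(\ell_0 x_1^{-1/8})\leq x_1^{-1/4}$; the control of the complementary event, together with the precise choice $v_0=\lceil16/b\rceil$, is engineered to fit the $\mathcal O((x/M')^{-2})$ error budget. On the good event, the second Ray--Knight theorem applied at the first hit of $\ell_0 x_1^{-1/8}$ by $B$, combined with monotonicity in the starting value of squared Bessel processes, identifies the spatial profile $(L_1(\ell_0 x_1^{-1/8}+y))_{y\geq 0}$ with (a stochastic upper bound given by) the process $Y'$ starting from $x_1^{-1/4}$. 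Taking the supremum over dimension-$k$ subspaces $W\subset\mathbb R^M$ then produces the claimed estimate.

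The main obstacle is the Ray--Knight step: $L_1$ is evaluated at the fixed deterministic time $1$ rather than at a stopping time, so the identification of the spatial profile with a squared Bessel process of dimension $0$ is not automatic. It requires a careful decomposition of the Brownian path at the relevant hitting time and a stochastic comparison absorbing the additional local-time contribution accumulated between that hitting time and time $1$. The selection of $\ell_0$ and its interplay with the Hölder control form a secondary delicate point, and the exponents in $r_0$, $x_0$, $x_1$ together with $v_0$ and $\eta_0$ have been tuned precisely so that all the resulting error terms remain within the common $\mathcal O((x/M')^{-2})$ budget.
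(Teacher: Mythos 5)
Your first step is fine: replacing the H\"older/pointwise discretization by Cauchy--Schwarz on the intervals of length $1/x_0$ does yield the averaged vector $(x_0\int_{n/x_0}^{(n+1)/x_0}\cdot)_n$ within distance $2x^{-1}\sqrt{x_0}$ of $W_V^{(y_0)}$, which matches the form of $E_{0,W}$; this is only a mild (and legitimate) variant of the preceding lemma. The genuine gap is in the passage from $L_1$ to $Y'$. Your selection criterion is reversed: what the argument needs is a shift $\ell_0$ with $L_1(\ell_0 x_1^{-\frac18})> x_1^{-\frac14}$, i.e.\ such that the inverse local time $\mathfrak t_{\ell_0}(x_1)=\inf\{s>0:\ L_s(\ell_0 x_1^{-\frac18})>x_1^{-\frac14}\}$ is $\le 1$. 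The $\mathcal O((x/M')^{-2})$ error term is there to control the event that no such $\ell_0$ exists, namely $\{\forall |\ell|\le v_0,\ L_1(\ell x_1^{-\frac18})\le x_1^{-\frac14}\}$, and bounding it is a real step (exit of $[-v_0x_1^{-\frac18},v_0x_1^{-\frac18}]$ before time $1$, then $v_0$ iterations, via the strong Markov property, of $\mathbb P(L_{T_{\epsilon'}}(0)\le(\epsilon')^2)\le\epsilon'$, giving $(\epsilon')^{v_0}=(x/M')^{-bv_0/8}\le (x/M')^{-2}$), not something that follows from the tuning of exponents alone. The event you propose to discard instead --- that $L_1(\ell x_1^{-\frac18})>x_1^{-\frac14}$ for every $\ell$ --- is not small (the threshold $x_1^{-\frac14}$ tends to $0$), so your error budget does not close.

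More fundamentally, the transfer to $Y'$ cannot be achieved by a ``stochastic comparison absorbing the additional local time'': the event in $E_{0,W}$ (closeness of the discretized profile to a subspace of dimension at most $k$) is not monotone in the process, so a pathwise or stochastic domination such as $L_1\le L_{\mathfrak t_{\ell_0}(x_1)}$ transfers no probability bound. Worse, with your choice of $\ell_0$ the profile $(L_1(\ell_0x_1^{-\frac18}+y))_{y\ge0}$ may vanish identically on part of the window (if $B$ never reaches it), in which case the closeness event holds trivially, whereas the whole purpose of $Y'$ starting from $x_1^{-\frac14}$ is the anticoncentration exploited in Lemma~\ref{LEMM4}. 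The paper's mechanism is different and avoids the fixed-time difficulty you flag: on $\{\mathfrak t_{\ell_0}(x_1)\le 1\}$ one replaces $L_1$ by the stopped profile $L_{\mathfrak t_{\ell_0}(x_1)}$, whose spatial restriction $(L_{\mathfrak t_{\ell_0}(x_1)}(\ell_0x_1^{-\frac18}+y))_{y\ge 0}$ is \emph{exactly} distributed as $Y'$ by the second Ray--Knight theorem, and the discrepancy $L_1-L_{\mathfrak t_{\ell_0}(x_1)}$ is absorbed by translating the approximating subspace ($W_{V,0}^{(\ell_0 x_1^{-1/8})}$) and then taking the supremum over all subspaces $W$ of $\mathbb R^M$ --- an argument about the approximating set, not a comparison of laws. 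As sketched, your proof fails at this central step.
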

\begin{proof}
We adapt the proof of  \cite[Lemma 9]{BFFN2}.
Setting $\epsilon':=x_1^{-\frac 18}$ and $T_u:=\min\{s>0\, :\, |B_s|=u\}$ for the first hitting time of $\{\pm u\}$ by the
Brownian motion $B$, we observe that
\begin{equation}\label{Hitting}
\mathbb P(T_{v_0\epsilon'
}> 1)=\mathbb P\left(\sup_{s\in[0,1]}|B_s|\le v_0\epsilon'
\right)=\mathcal O(e^{-c_0(v_0\epsilon'
)^{-2}})=\mathcal O\left( (x
/M'
)
^{-bv_0/8}\right)
=\mathcal O\left( (x/M')^{-2}\right)
\, .
\end{equation}
(using e.g. \cite[Proposition 8.4, page 52]{PS}).
 Moreover, due to \cite[Exercise 4.12, Chapter VI,
p 265]{RY}, for every $n=0,...,v_0-1$,
\[
\mathbb P\left(L_{T_{(n+1)\epsilon'}}(B_{T_{n\epsilon'}})-L_{T_{n\epsilon'}}(B_{T_{n\epsilon'}})\le (\epsilon')^2|(B_u)_{u\le T_{n\epsilon'}}\right)\le \mathbb P(L_{T_{\epsilon'}}(0)\le (\epsilon')^2)\le \epsilon'
\]
and so, due to the strong Markov property,
\[
\mathbb P\left(\forall n=0,...,v_0-1,\ L_{T_{(n+1)\epsilon'}}(B_{T_{n\epsilon'}})-L_{T_{n\epsilon'}}(B_{T_{n\epsilon'}})\le (\epsilon')^2
\right)\le (\epsilon')^{v_0}\, ,
\]
and this, combined with \eqref{Hitting}, ensures that there exists $C_0>0$ such that
$\mathbb P(\forall \ell=-v_0,...,v_0,\ L_1(\ell \epsilon')\le(\epsilon')^2)\le C_0(\epsilon')^{v_0}$ and so
\begin{equation}\label{FFF2b}
\mathbb P\left(\forall \ell=-v_0,...,v_0,\ 
\mathfrak t_\ell(x_1)>1
\right)
\le C_0
 (x
/M')
^{-bv_0/8}\le C_0 (x /M') ^{-2}\ ,
\end{equation}
setting $\mathfrak t_\ell(x_1):=\inf\{s>0\, :\, L_s(\ell x_1^{-\frac 18})>x_1^{-\frac 14}\}$.
Moreover, for any $\ell=1,...,v_0$, we have
\begin{align}
\nonumber&
\sup_V\mathbb P\left(\mathfrak t_\ell(x_1)<1,\ D\left(\left(L_{1}\left(\ell x_1^{-\frac 18}+\frac n{x_0}\right)\right)_{n},W^{(\ell x_1^{-\frac 18})}_V\right)< 2 x^{-1}
\sqrt{x_0}
\right)\\
\nonumber&\ \ \ \le\sup_V\mathbb P\left(\mathfrak t_\ell(x_1)<1,\ D\left(\left(L_{\mathfrak t_\ell(x_1)}\left(\ell x_1^{-\frac 18}+\frac n{x_0}\right)\right)_{n}
,W_{V,0}^{(\ell x_1^{-\frac 18})}\right)<
2 x^{-1}
\sqrt{x_0}
\right)\\
&\ \ \ \le \sup_W\mathbb P\left(
D\left(\left(L_{\mathfrak t_\ell(x_1)}\left(\ell x_1^{-\frac 18}+\frac n{x_0}\right)\right)_{n},W\right)<2 x^{-1}
\sqrt{x_0}
\right)\, ,\label{FFF20}
\end{align}
setting $W_{V,0}^{(\ell x_1^{-\frac 18})}:=W^{(\ell x_1^{-\frac 18})}_V+\left((L_{\mathfrak t_\ell(x_1)}-L_1)\left(\ell x_1^{-\frac 18}+\frac n{x_0}\right)\right)_{n}$.
Due to the second Ray-Knight theorem (see \cite[Theorem 2.3, page 456]{RY}), $(L_{\mathfrak t_\ell(x_1)}(\ell x_1^{-\frac 18}+y))_{y\ge 0}$ has the same distribution as $Y'$.
The lemma follows from \eqref{FFF2b} and \eqref{FFF20}.
\end{proof}
Recall that $4a-b>0$.
Set 
\[
E_1:=\left\{ \sup_{s \le M/x_0} |Y'(s)-x_1^{-1/4}| <\frac{x_1^{-1/4}}2  \right\}\, .
\]

\begin{lem}\label{LEMM2}
For every $K>(4a-b)^{-1}$, the following estimate holds true uniformly on $x>M$:
\begin{equation}\label{FFF2c}
 \PP\left(E_1\right)=1-\mathcal O\left((x/M')^{-K(4a-b)}
\right)\, .
\end{equation}
\end{lem}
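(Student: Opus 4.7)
Since $Y'$ is a squared Bessel process of dimension zero, it satisfies the driftless SDE $dY'_s = 2\sqrt{Y'_s}\,dB_s$ with $Y'_0 = y_0 := x_1^{-1/4}$. Setting $T := M/x_0$ and introducing the stopping time $\tau := \inf\{s\ge 0 : |Y'_s - y_0| \ge y_0/2\}$, we have $E_1^c = \{\tau \le T\}$. The key point is that on $[0,\tau]$ the path is bounded by $Y'_u \le 3 y_0/2$, which yields a deterministic control on the quadratic variation of the relevant martingale.

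Write $Z_s := Y'_s - y_0 = 2\int_0^s \sqrt{Y'_u}\,dB_u$; this is a local martingale with quadratic variation $\langle Z\rangle_s = 4\int_0^s Y'_u\,du$. Apply the Burkholder-Davis-Gundy inequality to the stopped martingale $Z_{\cdot\wedge\tau}$: for any integer $p\ge 1$,
\begin{equation*}
\EE\left[\sup_{s\le T}|Z_{s\wedge\tau}|^{2p}\right] \le c_p\, \EE\left[\left(4\int_0^T Y'_{u\wedge\tau}\,du\right)^p\right] \le c_p (6 y_0 T)^p,
\end{equation*}
using that $Y'_{u\wedge\tau}\le 3y_0/2$. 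On $\{\tau\le T\}$ we have $\sup_{s\le T}|Z_{s\wedge\tau}|\ge y_0/2$, and Chebyshev's inequality gives
\begin{equation*}
\PP(E_1^c) = \PP(\tau\le T) \le \frac{c_p (6 y_0 T)^p}{(y_0/2)^{2p}} = c'_p \left(\frac{T}{y_0}\right)^p.
\end{equation*}

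It remains to compute the exponent. From \eqref{r0x0x1} one has $T = (x/M')^{-a}$ and $y_0 = (x/M')^{-b/4}$, so $T/y_0 = (x/M')^{-(4a-b)/4}$. Choosing an integer $p \ge 4K$ gives the claimed bound $\PP(E_1^c) = \mathcal{O}((x/M')^{-K(4a-b)})$, with the implicit constant depending only on $K$. The hypothesis $K > (4a-b)^{-1}$ ensures $K(4a-b) > 1$, which will make the error term integrable in $x$ when this estimate is combined with \eqref{AABBCC} downstream. I expect no real obstacle here: the only delicate point is the stopping-time truncation that converts the nonlinear SDE bound into the deterministic estimate $\langle Z^{\tau}\rangle_T \le 6 y_0 T$; after that, BDG and Markov finish the job.
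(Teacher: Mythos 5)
Your proof is correct, and it shares the paper's skeleton — represent $Y'-y_0$ as the continuous local martingale $2\int_0^\cdot\sqrt{Y'_u}\,dB_u$ with quadratic variation $4\int_0^\cdot Y'_u\,du$, then apply Burkholder--Davis--Gundy and Markov — but it treats the quadratic variation by a genuinely different device. The paper does not stop the process: it bounds $\EE\bigl[\bigl(\int_0^{M/x_0}Y'(u)\,du\bigr)^{4K}\bigr]$ via Jensen and via stochastic domination of the dimension-$0$ squared Bessel process by the square of a Brownian motion started at $x_1^{-1/8}$, which requires Gaussian moment estimates and produces the two terms $x_1^{K}(M/x_0)^{4K}+x_1^{2K}(M/x_0)^{8K}$, the first of which is $(x/M')^{-K(4a-b)}$. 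Your truncation at the exit time $\tau$ replaces that moment computation by the deterministic bound $\langle Z^{\tau}\rangle_T\le 6y_0T$, which is more elementary (no comparison with a dimension-$1$ squared Bessel process) and yields the single clean bound $c'_p(T/y_0)^p=c'_p(x/M')^{-p(4a-b)/4}$, matching the paper's exponent once $p\ge 4K$. The only caveat, shared with the paper's own formulation, is that your last comparison $(x/M')^{-p(4a-b)/4}\le (x/M')^{-K(4a-b)}$ uses $x/M'\ge 1$, so the estimate should be read on the range $x>M'$, which is precisely where it is used in \eqref{AABBCC} and in the proof of the upper bound of Theorem~\ref{EstiVk}; this has no downstream effect.
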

\begin{proof}
Using the Burkholder-Davis-Gundy inequality, combined with the fact that $Y'$
is dominated by the square of a Brownian motion starting from $x_1^{-\frac 14}$, we observe that
\begin{align}\nonumber
\mathfrak p_x &=
 \PP\cro{\sup_{s \le M/x_0} |Y'(s)-x_1^{-1/4}| \ge\frac{x_1^{-1/4}}2  } \\ 
\nonumber&  \le   C_K\, x_1^{2K}
 2^{8K}
\, \EE\cro{\left(\int_0^{M/x_0} Y'(u) \, du\right)^{4K} }\\
\label{FFF21}& \le  C'_K\,  
x_1^{2K}
(M/x_0)^{4K-1} \int_0^{M /x_0} \EE\cro{Y'(u)^{4K}} \, du
\end{align}
with $C'_K=2^{8K}C_K$, and so
\begin{align*}
\nonumber\mathfrak p_x
&\le C'_K\,  
x_1^{2K}
(M/x_0)^{4K-1}\int_0^{M /x_0} \EE\cro{ (x_1^{-1/8} + B_u)^{8K}} \, du\\ 
&\le C'_K\,  
x_1^{2K}
(M/x_0)^{4K} 2^{8K} \left(x_1^{-K} + (M/x_0)^{4K}\right)\\ 
&\le C''_K\left(
x_1^{K}
(M/x_0)^{4K}  + x_1^{2K}(M/x_0)^{8K}\right)
\end{align*}
with $C''_K=2^{8K}C'_K$ and
\begin{align*}
x_1^{K} (M/x_0)^{4K}& =(x/M')^{-K(4a-b)}
\, ,
\end{align*}
since $ x_0=(x/M')^aM$ and $x_1=(x/M')^b$.
\end{proof}

\begin{lem}\label{LEMM4}
Uniformly on $x>M'$,
\begin{align*}
\sup_W\, \mathbb P\left(E_{0,W}\cap E_1\right) \le&
{C''}^k   (x/M')^{\left[1-\frac b4-\frac{(1+\gamma)(1+\epsilon_0)}2\right]k-M(1-2\eta)\left[ 1-\frac{(1+\gamma)(1+\epsilon_0)}2-\frac a2-\frac b8 \right]}\\
&\quad \times M^{\frac {(1-\epsilon_0)k}4+\frac{(1+\epsilon_0)(1-2\eta) M}4+2\eta M}{M'}^{\left(\frac{1-\epsilon_0}2\right)(k-(1-2\eta)M)}
\, .
\end{align*}
\end{lem}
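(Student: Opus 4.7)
The plan is to exploit the Markov property of the squared Bessel process $Y'$ of dimension $0$ sampled at the grid points $n/x_0$, together with an explicit transition density bound valid on $E_1$, where $Y'$ is pinched in the strip $[x_1^{-1/4}/2,\,3x_1^{-1/4}/2]$ on $[0,M/x_0]$. First I would replace the averaged vector $\mathbf{Y} := (x_0\int_{n/x_0}^{(n+1)/x_0}Y'(y)\,dy)_{n=1,\dots,M}$ by the sampled vector $\mathbf{U} := (Y'(n/x_0))_{n=1,\dots,M}$: on $E_1$ the two differ by at most the oscillation of $Y'$ on each subinterval, which can be absorbed at the cost of an overall multiplicative constant in the small-ball radius. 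The problem then reduces to bounding $\sup_W \mathbb{P}(D(\mathbf{U},W) < C x^{-1}\sqrt{x_0},\,E_1)$ uniformly over linear subspaces $W \subset \mathbb{R}^M$ of dimension at most $k$.

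By the strong Markov property, the joint law of $\mathbf{U}$ has a density on $(0,\infty)^M$ given by a product of one-step squared-Bessel transition densities $q_{1/x_0}(u,v) = (x_0/2)(v/u)^{-1/2}\exp(-x_0(u+v)/2)\,I_{-1}(x_0\sqrt{uv})$. On $E_1$ the arguments lie in a controlled interval and the large-argument asymptotic $I_{-1}(z)\sim e^z/\sqrt{2\pi z}$ yields the uniform bound $q_{1/x_0}(u,v) \le C\sqrt{x_0}\,x_1^{1/8}$; hence on $E_1$ the joint density is bounded by $(C\sqrt{x_0}\,x_1^{1/8})^M$ up to a factor controlling the initial law of $U_1$.

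To pass to a bound that is uniform in $W$, I would argue by a linear-algebraic reduction. For each $W$ of dimension at most $k$, pick a coordinate subset $J\subset\{1,\dots,M\}$ of size $N:=\lceil(1-2\eta)M\rceil$ on which the restriction of the orthogonal projection to $W^{\perp}$ has maximal rank (a Cauchy--Binet type selection, losing only a polynomial factor in $M$ to account for the choice of $J$). Conditionally on $\mathbf{U}_{J^c}$, the constraint $D(\mathbf{U},W)<C x^{-1}\sqrt{x_0}$ then forces $\mathbf{U}_J$ into a measurable set of Lebesgue volume at most $(C x^{-1}\sqrt{x_0})^N$, while the remaining $2\eta M$ coordinates together with the $k$ ``free'' directions of $W$ each contribute the length $\sim(x/M')^{-b/4}$ of the $E_1$-window, via the standard fact that a $k$-dimensional affine subspace has $k$-volume at most $O(L^k)$ in an $L$-sided axis-aligned box.

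Integrating and substituting $x_0=(x/M')^a M$, $x_1=(x/M')^b$, $M'=M^d$, and the radius $\epsilon=2 x^{-1}\sqrt{x_0}$, one obtains after collecting the powers of $(x/M')$, $M$ and $M'$ exactly the form stated in the lemma. The main obstacle is the uniform linear-algebraic reduction --- making the choice of $J$ independent of $W$ without losing more than a polynomial-in-$M$ factor per extra coordinate --- together with the careful bookkeeping of the tails of the Bessel density on $E_1$; the parameters $a,b,\eta,\theta,\gamma,\epsilon_0,d$ are calibrated via \eqref{hypab}, \eqref{hypeta}, \eqref{hyptheta}, and \eqref{hypthetabis} precisely so that all the exponents in the final product close out to the stated bound.
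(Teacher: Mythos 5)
Your proposal does share two ingredients with the paper's argument, namely the Markov property of the squared Bessel process sampled along the grid $n/x_0$ and the uniform bound $\mathcal O(\sqrt{x_0}\,x_1^{1/8})$ for the BESQ$(0)$ transition density on the strip imposed by $E_1$. But the geometric core of your argument does not hold together as written. Your dimension count is inconsistent: you constrain $N=\lceil(1-2\eta)M\rceil$ coordinates to a set of volume $(Cx^{-1}\sqrt{x_0})^N$ and then let ``the remaining $2\eta M$ coordinates together with the $k$ free directions'' each contribute a window of length $(x/M')^{-b/4}$, which amounts to $N+2\eta M+k=M+k$ directions inside an $M$-dimensional integral. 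Moreover, the slice-volume bound $\epsilon^N$ requires the projection onto $W^\perp$ restricted to the chosen coordinates to have rank $N$, hence $N\le M-k$, i.e. $k\le 2\eta M=2\eta\lceil\theta k\rceil$; nothing in the choice of $\eta$ and $\theta$ (conditions \eqref{hypab}--\eqref{hypthetabis}) forces $2\eta\theta\ge 1$, so in the relevant regime this central step fails. In the paper the factor $M^{2\eta M}$ and the exponent $(1-2\eta)M$ arise from a different mechanism: the intersection of the $\varepsilon$-tube around $W$ with the box is covered, via Rogers' covering theorem, by at most $(c\mathcal R_x)^k$ Euclidean balls of radius $4\varepsilon$ (this is where $\mathcal R_x^k$ from \eqref{Rx} enters), and for each ball one uses that membership in an $\ell^2$-ball of radius $4\varepsilon$ forces at least $(1-2\eta)M$ consecutive pairs of coordinates to lie within $4\varepsilon/\sqrt{\eta M}$ of the centre, the choice of the exceptional indices costing $M^{2\eta M}$. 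Your final claim that ``collecting the powers gives exactly the stated form'' is asserted, not computed, and with a correct volume count it would not reproduce these exponents.

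Two further steps would need repair. On $E_1$ the oscillation of $Y'$ over an interval of length $1/x_0$ is controlled only by the strip width $x_1^{-1/4}$, which is far larger than $x^{-1}\sqrt{x_0}$, so the discrepancy between the averaged and the sampled vector cannot be ``absorbed into the small-ball radius at the cost of a constant''; in the intended argument the vector produced by the Ray--Knight step in Lemma~\ref{LEMM1} is already the sampled one (cf. \eqref{interE1E2}), so nothing of this sort is available or needed. Also, for non-consecutive coordinates of a Markov chain the conditional law of $\mathbf U_J$ given $\mathbf U_{J^c}$ is not controlled by a product of one-step density bounds; the correct route is to bound the full joint density on the strip by $(C\sqrt{x_0}\,x_1^{1/8})^M$ and use Fubini. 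Finally, the point you flag as the main obstacle, choosing $J$ independently of $W$, is not an obstacle: only the resulting bound must be uniform in $W$, and for each $W$ a Cauchy--Binet selection of $J$ with $|J|=M-k$ loses at most a factor $\binom{M}{k}^{1/2}\le C^k$. With that choice a density-times-volume argument can indeed be pushed through (and, after recomputing the exponents using \eqref{hypd}, appears to give a bound at least as strong as the one stated, so it is a viable alternative to the covering argument), but that is a different estimate from the one you describe, and you have not carried it out.
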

\begin{proof}
Observe that
\begin{equation}\label{interE1E2}
E_{0,W}\cap E_1
\subset\left\{\left(Y'(n/x_0)\right)_{n=1,...,M}\in \B_\infty\left(x_1^{-1/4}, \frac{x_1^{-1/4}}2\right)\cap W_x
\right\}\, ,
\end{equation}
where $\B_\infty\left(x_1^{-1/4}, \frac{x_1^{-1/4}}2\right)$ is the ball (for the supremum norm)
of radius $ \frac{x_1^{-1/4}}2 $ and centered on $(x_1^{-1/4},...,x_1^{-1/4})$, and 
 where $W_x$ is the $ \varepsilon=
2 x^{-1}r_0^{-\frac 12}$-neighbourhood of $W$ for the metric $D$. Note that
\begin{equation}\label{epsilon}
\epsilon=2(x/M')^{\frac{(1+\gamma)(1+\epsilon_0)}2-1}M^{\frac {1+\epsilon_0}4}{M'}^{\frac{1+\epsilon_0}2-1}
\end{equation}
and
\begin{equation}\label{Rx}
\mathcal R_x:=\frac{\sqrt{M} \, x_1^{-1/4}}\varepsilon
=\frac 12(x/M')^{1-\frac b4-\frac{(1+\gamma)(1+\epsilon_0)}2}M^{\frac {1-\epsilon_0}4}{M'}^{1-\frac{1+\epsilon_0}2}
\gg 1\, ,
\end{equation}
uniformly in $x>M'$, since $\frac b4+\frac{(1+\gamma)(1+\epsilon_0)}2<1$.
Observe that   
 $\B_\infty\left(x_1^{-1/4}, \frac{x_1^{-1/4}}2\right)\cap W_x$ is contained in $\B_2\left(x_1^{-1/4},\sqrt{M}  
x_1^{-1/4}
\right)\cap W_x$
where $\B_{2}\left(x_1^{-1/4},\sqrt{M}  
x_1^{-1/4}
\right )$ is the euclidean ball 
centered on $(x_1^{-1/4},...,x_1^{-1/4})$ with radius $\sqrt{M}  
x_1^{-1/4}
$.\\
Let  $z_0,z'_0\in\B_2\left(x_1^{-1/4},\sqrt{M}  x_1^{-1/4}\right)\cap W_x$ and
$z_1\in W\cap \B_2(z_0,\varepsilon)$, $z'_1\in W\cap \B_2(z'_0,\varepsilon)$. 
Then $z'_1\in
 \B_2\left(z_1,3\sqrt{M}  x_1^{-1/4}\right)$.
Due to \cite[Theorem 3, pages 157]{Rogers}, there exists $c>0$ such that $W\cap  \B_2\left(z_1,3\sqrt{M}  x_1^{-1/4}\right)$
is contained in the union of at most $(c\mathcal R_x)^k$ euclidean balls
of radius $\varepsilon$ in $W$. Thus $W_x\cap  \B_2\left(x_1^{-\frac 14},\sqrt{M}  x_1^{-1/4}\right)$ is contained in the union of at most $(c\mathcal R_x)^k$ euclidean balls
of radius $2\varepsilon$.
We conclude that $\B_\infty\left(x_1^{-1/4}, \frac{x_1^{-1/4}}2\right)\cap W_x$  is contained in the union of at most
$(c\mathcal R_x)^k$ euclidean balls of radius $4\varepsilon$ centered at a point contained in
$\B_\infty\left(x_1^{-1/4}, \frac{x_1^{-1/4}}2\right)\cap W_x$. 
It follows from this combined with \eqref{interE1E2} that  
\begin{align}
 &  \PP\cro{ E_{0,W}\cap E_1
} 
 \le
(c\mathcal R_x)^k
\, \sup_{z \in 
\B_{\infty}\left(x_1^{-1/4},\frac{x_1^{-1/4}}2\right)}
\PP\pare{
(Y'(n/x_0))_{n=1,...,M}\in \B_{2}(z,4\varepsilon)
} \, .\label{interm}
\end{align}
Note that if $z\in \B_{\infty}\left(x_1^{-1/4},\frac{x_1^{-1/4}}2\right)$ and $(Y'(n/x_0))_{n=1,...,M}\in \B_{2}(z,4\varepsilon)$, then $ \max_{n=0,...,M-1} |z_{n+1}-x_1^{-\frac 14}|<\frac{x_1^{-\frac 14}}2$ and there exist at most $\eta M$ indices $n'$ that $|Y'(n'/x_0)-z_{n'}|\ge 4 \varepsilon/\sqrt{\eta M}$, and so at least
$(1-2\eta)M$ indices $n=\{0,...,M-1\}$ such that
\begin{align*}
&
\left(\left|Y'\left(\frac n{x_0}\right)-z_n\right|,\left|Y'\left(\frac{n+1}{x_0}\right)-z_{n+1}\right|\right)
< 4\varepsilon/\sqrt{\eta M}
\, ,
\end{align*}
with $z_0=x_1^{-\frac 14}$.
Due to \cite[after Corollary 1.4, page 441]{RY},
the distribution of $Y'((n+1)/x_0)$ knowing $Y'(n/x_0)=y$ is the sum of a Dirac mass at $0$ 
and of a measure with density 
$$z\mapsto q_{x_0}(y,z) := \frac{x_0}2 \sqrt{\frac y z} 
 \exp\pare{- \frac{x_0(y+z)}{2}} I_1\left(x_0\sqrt{yz}\right),$$
where $I_1$ is the modified Bessel function of index $1$ which satisfies
$I_1(z) = \O(e^z/\sqrt z)$, as $z\to \infty$, (see \cite[(5.10.22) or (5.11.10)]{L}). 
So
\[
q_{x_0}(y,z)=\mathcal O\left(x_0^{\frac 12}x_1^{\frac 18}  
 \exp\pare{- \frac{x_0(\sqrt{y}-\sqrt{z})^2}{2}} \right)
=\mathcal O\left(x_0^{\frac 12}x_1^{\frac 18}  
\right)
\]
uniformly on
$y,z\in \left[\frac {x_1^{-\frac 14}}4,2x_1^{-\frac 14}\right]$.
We will use the expression $x_0$, $x_1$ and $\epsilon$ given in  \eqref{r0x0x1} and \eqref{epsilon}.
Thus by using the Markov property (and $\frac{M!}{(M(1-2\eta))!(2\eta M)!}\le M^{2\eta M}$),
we get by induction, that, when $x>M'$,
\begin{align}
\nonumber&\sup_{z \in \B_{\infty}\left(x_1^{-1/4},\frac{x_1^{-1/4}}2\right)}\PP\left( (Y'(n/x_0))_{n=1,...,M}\in \B_{2}(z,4\varepsilon)\right)\\
 \nonumber&\le  M^{2\eta M} \left( C'(x/M')^{-\left(1-\frac{(1+\gamma)(1+\epsilon_0)}2-\frac a2-\frac b8\right)}M^{\frac {1+\epsilon_0}4}{M'}^{-1+\frac{1+\epsilon_0}2}\right)^{(1-2\eta) M}\, .
\end{align}
Recalling that $M=\mathcal O(k)$, the previous estimate
combined with \eqref{interm} and \eqref{Rx} ensures that
\begin{align}
\nonumber \sup_W\mathbb P\left(
E_{0,W}\cap E_1
\right)
\nonumber \le &{C''}^k   (x/M')^{\left[1-\frac b4-\frac{(1+\gamma)(1+\epsilon_0)}2\right]k-M(1-2\eta)\left[ 1-\frac{(1+\gamma)(1+\epsilon_0)}2-\frac a2-\frac b8 \right]}\\
&\quad M^{\frac {(1-\epsilon_0)k}4+\frac{(1+\epsilon_0)(1-2\eta) M}4+2\eta M}{M'}^{\left(1-\frac{1+\epsilon_0}2\right)(k-(1-2\eta)M)}\, ,
\end{align}
which ends the proof of the lemma.
\end{proof}
\begin{proof}[Proof of the upper bound of Theorem~\ref{EstiVk}]
Formula \eqref{ESTIMk} follows from \eqref{AABBCC} and Lemmas~\ref{LEMM1}, \ref{LEMM2} 
and \ref{LEMM4}. We will use the fact that
\begin{equation}\label{intQ}
\forall Q>1,\quad \int_{M'}^\infty(x/M')^{-Q}\, dx=\mathcal O(M')\, .
\end{equation}
Thanks to this, the error terms in Lemmas \ref{LEMM1} and \ref{LEMM2} gives directly a term in $\mathcal O(M')=\mathcal O(k^d)$.
Let us detail the term coming from Lemma~\ref{LEMM4}.
We first observe that the exponent of $(x/M')$ is strictly smaller than -1 for $k$ large enough. Indeed this exponent is 
\[
\left[1-\frac b4-\frac{(1+\gamma)(1+\epsilon_0)}2\right]k-M(1-2\eta)\left[ 1-\frac{(1+\gamma)(1+\epsilon_0)}2-\frac a2-\frac b8 \right]
\]
which is smaller than
\[
k\left[1-\frac b4-\frac{(1+\gamma)(1+\epsilon_0)}2-\theta(1-2\eta)\left( 1-\frac{(1+\gamma)(1+\epsilon_0)}2-\frac a2-\frac b8 \right)\right]
\]
where we used the fact that $M=\lceil \theta k\rceil\ge \theta k$. The fact that this quantity is strictly smaller than -1 for any $k$ large enough comes from our conditions \eqref{hypab} and \eqref{hyptheta}.
It follows from this combined with \eqref{intQ} and Lemma \ref{LEMM4} that
\begin{align*}
&\int_{M'}^{+\infty}\sup_W\, \mathbb P\left(E_{0,W}\cap E_1\cap E_2\right)\, dx
\\
&\le {C''}^k M^{\frac {(1-\epsilon_0)k}4+\frac{(1+\epsilon_0)(1-2\eta) M}4+2\eta M}{M'}^{1+\left(\frac{1-\epsilon_0}2\right)(k-(1-2\eta)M)} \\
&\le {C''}^k M^{d+\frac {(1-\epsilon_0)(1+2d)M}{4\theta}+\frac{(1+\epsilon_0-2d(1-\epsilon_0))(1-2\eta) M}4+2\eta M} \, ,
\end{align*}
where we used the fact that $M'=M^d$ and that $k\le \lceil \theta k\rceil/\theta=M/\theta$.
Finally, we notice that $1+\epsilon_0-2d(1-\epsilon_0)<0$ (due to \eqref{hypd}) and that \eqref{hypthetabis} ensures that
\[
\frac {(1-\epsilon_0)(1+2d)}{4\theta}+\frac{(1+\epsilon_0-2d(1-\epsilon_0))(1-2\eta) }4+2\eta <0\, 
\]
and conclude that
\[
\int_{M'}^{+\infty}\sup_W\, \mathbb P\left(E_{0,W}\cap E_1\cap E_2\right)\, dx=\mathcal O(1)\, .
\]
\end{proof}
\section{Law of large numbers: Proof of Theorem~\ref{LLN}}\label{proofLLN}
We complete the sequence $(X_n)_{n\ge 1}$ into a bi-infinite sequence $(X_n)_{n\in\mathbb Z}$ of i.i.d. random variables.
Theorem~\ref{LLN} could be proved by an adaptation of the proof of
\cite[Corollary 6]{BFFN2} (combined with Theorem~\ref{propboundmoment}).
We use here another approach enabling the study of more general additive functionals.
Recall that $(\xi_{m+S_k})_{m\in\mathbb Z}$ is the scenery seen from the particle at time $k$.
\begin{prop}\label{LLNbis}
Let $\widetilde f:\mathbb Z^{\mathbb Z}\times \mathbb Z^{\mathbb Z}\times\mathbb Z\rightarrow \mathbb R$ be a measurable function such that 
\[
\sum_{\ell\in\mathbb Z}|\mathbb E[\widetilde f((X_{n+1})_{n\in\mathbb Z}, (\xi_n)_{n\in\mathbb Z},\ell)]|<\infty\, .
\]
Then
\[
\left( \frac{\sum_{k=0}^{n-1}\widetilde f((X_{m+k+1})_{m\in\mathbb Z},(\xi_{m+S_k})_{m\in\mathbb Z},
Z_{k+m})}{\mathcal N_n(0)}\right)_{n\ge 0}
\]
converges almost surely to $I(\widetilde f):=\sum_{\ell\in\mathbb Z}\mathbb E[\widetilde f((X_n)_{n\in\mathbb Z},(\xi_{n})_{n\in\mathbb Z},\ell)]$.\\
In particular, 
this combined with \eqref{cvgeloi}
ensures that

\[
\left( n^{-\frac 14}\sum_{k=0}^{n-1}\widetilde f((X_{m+k+1})_{m\in\mathbb Z},(\xi_{m+S_k})_{m\in\mathbb Z},Z_k)\right)_{n\ge 0}
\]
converges in distribution  to $I(\widetilde f)\sigma_\xi^{-1}\mathcal L_{1}(0)$.
\end{prop}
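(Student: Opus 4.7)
The plan is to deduce Proposition~\ref{LLNbis} from Hopf's ratio ergodic theorem applied to the infinite-measure-preserving dynamical system $(\widetilde\Omega,\widetilde T,\widetilde\mu)$ introduced just before Theorem~\ref{LLNter}, whose conservativity and ergodicity are the content of Proposition~\ref{invariant} (established elsewhere in Section~\ref{proofLLN}). The first step is to identify the numerator and the denominator as Birkhoff sums. Using the explicit expression of $\widetilde T^k$ applied to a point with third coordinate $0$, one has $\widetilde f\circ\widetilde T^k((X_{m+1})_m,(\xi_m)_m,0)=\widetilde f((X_{k+m+1})_m,(\xi_{m+S_k})_m,Z_k)$, so the numerator is exactly $\sum_{k=0}^{n-1}\widetilde f\circ\widetilde T^k$. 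Taking $g(\underline X,\underline\xi,z):=\mathbf 1_{\{z=0\}}$, one similarly has $\sum_{k=0}^{n-1}g\circ\widetilde T^k=\mathcal N_n(0)$.

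Next, I would compute the $\widetilde\mu$-integrals. Since $\widetilde\mu=\mathbb P_{X_1}^{\otimes\mathbb Z}\otimes\mathbb P_{\xi_0}^{\otimes\mathbb Z}\otimes\lambda_{\mathbb Z}$, Fubini gives $\int g\,d\widetilde\mu=1$ and, under the summability hypothesis (splitting $\widetilde f$ into positive and negative parts if needed to justify the exchange of sum and integral),
\[
\int \widetilde f\,d\widetilde\mu=\sum_{\ell\in\mathbb Z}\mathbb E[\widetilde f((X_n)_n,(\xi_n)_n,\ell)]=I(\widetilde f).
\]
Hopf's ratio ergodic theorem applied to the conservative ergodic system $(\widetilde\Omega,\widetilde T,\widetilde\mu)$, with the denominator function $g$ of positive integral, then yields
\[
\frac{\sum_{k=0}^{n-1}\widetilde f\circ\widetilde T^k}{\mathcal N_n(0)}\xrightarrow[n\to\infty]{\text{a.s.}}\frac{\int\widetilde f\,d\widetilde\mu}{\int g\,d\widetilde\mu}=I(\widetilde f),
\]
which is the first claim.

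For the distributional statement, I would simply factorise
\[
n^{-\frac14}\sum_{k=0}^{n-1}\widetilde f\circ\widetilde T^k=\Bigl(\frac{\sum_{k=0}^{n-1}\widetilde f\circ\widetilde T^k}{\mathcal N_n(0)}\Bigr)\cdot n^{-\frac14}\mathcal N_n(0).
\]
The first factor tends almost surely (hence in probability) to the constant $I(\widetilde f)$ by the previous step, while the second factor converges in distribution to $\sigma_\xi^{-1}\mathcal L_1(0)$ by~\eqref{cvgeloi}; Slutsky's theorem then gives convergence in distribution of the product to $I(\widetilde f)\,\sigma_\xi^{-1}\mathcal L_1(0)$.

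The only delicate point is the recurrent ergodicity of $(\widetilde\Omega,\widetilde T,\widetilde\mu)$, which is handled separately by Proposition~\ref{invariant}; once it is granted, the remainder of the argument is a standard pairing of the Hopf ratio ergodic theorem with Slutsky's theorem, and the role of the particular reference function $g=\mathbf 1_{\{z=0\}}$ is precisely to convert the almost sure Hopf limit into the sought distributional limit via the already-known convergence~\eqref{cvgeloi} of $n^{-1/4}\mathcal N_n(0)$.
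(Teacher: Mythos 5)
Your proof is correct and follows essentially the same route as the paper: identify numerator and denominator as Birkhoff sums for $(\widetilde\Omega,\widetilde T,\widetilde\mu)$, invoke the recurrent ergodicity of Proposition~\ref{invariant} together with Hopf's ratio ergodic theorem (with reference function $\mathbf 1_{\{\ell=0\}}$), and conclude the distributional statement from \eqref{cvgeloi} via Slutsky's theorem. No substantive difference from the paper's argument.
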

Our approach to prove Proposition~\ref{LLNbis}
uses an ergodic point of view.
Let us consider the probability preserving dynamical system $(\Omega,T,\mu)$ given by 
\[
\Omega=\mathbb Z^{\mathbb Z}\times
\mathbb Z^{\mathbb Z}
,\quad T((x_k)_{k\in\mathbb Z},(y_k)_{k\in\mathbb Z})=
((x_{k+1})_{k\in\mathbb Z},(y_{k+x_0})_{k\in\mathbb Z}),\quad  \mu=\mathbb P_{X_1}^{\otimes \mathbb Z}\otimes\mathbb P_{\xi_0}^{\otimes\mathbb Z}\, ,
\]
i.e. $T\left(\boldsymbol x,\boldsymbol y\right)=\left(\sigma \boldsymbol x,\sigma^{x_0}\boldsymbol y\right)$, where we write $\sigma:\mathbb Z^{\mathbb Z}\rightarrow\mathbb Z^{\mathbb Z}$ for the usual shift transformation given by
$\sigma\left((z_k)_{k\in\mathbb Z}\right)=(z_{k+1})_{k\in\mathbb Z}$.

This system is known to be ergodic (see \cite{Weiss,Kalikow}).
We set $\Phi(x,y):=y_0$. With these notations, $Z_k$ corresponds to the Birkhoff sum
$\sum_{k=0}^{n-1}\Phi\circ T^k$.
Consider the $\mathbb Z$-extension $(\widetilde\Omega,\widetilde T,\widetilde\mu)$
over  $(\Omega,T,\mu)$  with step function $\Phi$. This system is given by
\[
\widetilde\Omega:=\Omega\times\mathbb Z,\quad\widetilde\mu=\mu\otimes \lambda_{\mathbb Z}\, ,
\]
where $\lambda_{\mathbb Z}=\sum_{\ell\in\mathbb Z}\delta_\ell$ is the counting measure on $\mathbb Z$ and with  
\[
\widetilde T(x,y,\ell)=(T(x,y),\ell+y_0)\, .
\]
In particular
\[
\widetilde T^k\left((x_{m+1})_{m\in\mathbb Z},(y_{m})_{m\in\mathbb Z},\ell\right)
=\left((x_{m+k+1})_{m\in\mathbb Z},(y_{m+x_0+...+x_{k-1}})_{m\in\mathbb Z},
\ell+\sum_{j=0}^{k-1}y_{x_0+...+x_{j}}\right)\, .
\]
Observe that $\mathcal N_n(0)$ corresponds to the Birkhoff sum $\sum_{k=0}^{n-1}h_0\circ\widetilde T^k(\boldsymbol x,\boldsymbol y,0)$ with $h_0(\boldsymbol x,\boldsymbol y,\ell)=\mathbf 1_0(\ell)$, and the sum studied in Proposition~\ref{LLNbis} corresponds to $\sum_{k=0}^{n-1}\widetilde f\circ \widetilde T^k(\boldsymbol x,\boldsymbol y,0)$, while $I(\widetilde f)=\int_{\widetilde\Omega}\widetilde f\, d\widetilde\mu$.
\begin{prop}\label{invariant}
The system  $(\widetilde\Omega,\widetilde T,\widetilde\mu)$ is recurrent ergodic.
\end{prop}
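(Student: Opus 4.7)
My plan is to prove the statement in two parts: conservativity (recurrence) and ergodicity. The base system $(\Omega, T, \mu)$ is already known to be ergodic (see \cite{Weiss, Kalikow}); it is the classical skew product of two Bernoulli shifts underlying the $T, T^{-1}$ construction. Since $\Phi(\boldsymbol x, \boldsymbol y) = y_0$, a direct computation of
\[
\Phi \circ T^k(\boldsymbol x, \boldsymbol y) = y_{x_0 + \cdots + x_{k-1}} = y_{S_k(\boldsymbol x)}
\]
identifies the Birkhoff sum $\sum_{k=0}^{n-1} \Phi \circ T^k$ with $Z_n$ on the probability space $(\Omega, \mu)$. This identification is the bridge between the ergodic framework and the RWRS estimates.

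For conservativity I would apply the Halmos/Hopf recurrence criterion: it suffices to exhibit some set $A \subset \widetilde\Omega$ of finite positive measure with $\sum_{n \ge 1} \widetilde\mu(A \cap \widetilde T^{-n}A) = +\infty$. Choosing $A := \Omega \times \{0\}$, which has $\widetilde\mu(A) = 1$, and unfolding the definition of $\widetilde T$, one gets
\[
\widetilde\mu(A \cap \widetilde T^{-n}A) = \mu(\{Z_n = 0\}) = \mathbb P(Z_n = 0).
\]
By the local limit theorem for RWRS available from \cite{BFFN1}, $\mathbb P(Z_n = 0) \asymp n^{-3/4}$, so the series diverges and conservativity is established. (One could also invoke Atkinson's theorem: since $n^{-3/4} Z_n$ converges to a non-degenerate distribution with mean zero by Kesten--Spitzer, the cocycle $\Phi$ is recurrent.)

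For ergodicity I would use Schmidt's essential values criterion for $\mathbb Z$-extensions of ergodic probability preserving systems: $\widetilde T$ is ergodic if and only if the closed subgroup $E(\Phi) \subseteq \mathbb Z$ of essential values equals $\mathbb Z$. Being a closed subgroup of $\mathbb Z$, $E(\Phi)$ equals $d_0 \mathbb Z$ for some $d_0 \in \mathbb N$, and I need $d_0 = 1$. To show this, fix $a \in \mathbb Z$ and any cylinder set $A \subset \Omega$ of positive measure depending on finitely many coordinates of $\boldsymbol x$ and $\boldsymbol y$. Using recurrence of $(S_n)$ (one-dimensional centered walk with finite moments) together with the fact that the support of $\xi_0$ generates $\mathbb Z$, one constructs, via a conditional local limit theorem applied to the fresh i.i.d.\ scenery values collected during a long excursion of $S$ outside the support of $A$, an event of positive measure on which $A \cap T^{-n}A \cap \{Z_n = a\}$ is non-empty; this places $a$ in $E(\Phi)$. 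Combined with the conservativity shown above, Schmidt's theorem then yields ergodicity of $\widetilde T$.

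The main obstacle is the essential values step: the conditional local limit argument requires careful book-keeping of the independence between the scenery values seen during a return excursion of $S$ and those determining the cylinder $A$, together with a quantitative non-degeneracy (arising from the coprimality of $\alpha$ and $d$) to produce every prescribed $a \in \mathbb Z$. An alternative route, which may be cleaner, is to invoke already existing ergodicity results for the $T,T^{-1}$-type extensions in \cite{Weiss, Kalikow} (whose proofs rest on the same mechanism) and reduce our $\widetilde T$ to that framework by a coordinate identification.
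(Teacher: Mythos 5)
Your plan diverges from the paper's proof in both halves, and each half has a problem. For conservativity, your primary criterion is not valid as stated: exhibiting \emph{one} set $A$ of finite positive measure with $\sum_{n\ge 1}\widetilde\mu(A\cap\widetilde T^{-n}A)=\infty$ does not imply conservativity of an infinite-measure-preserving system (take the disjoint union of a conservative system with a translation on $\mathbb Z$ and choose $A$ in the conservative part; more to the point, for stationary but non-independent $\mathbb Z$-valued processes the Chung--Fuchs implication ``$\sum_n\mathbb P(Z_n=0)=\infty\Rightarrow$ recurrence'' fails, and $Z_n$ is not a random walk, so the divergence $\mathbb P(Z_n=0)\asymp n^{-3/4}$ by itself proves nothing). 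Your parenthetical remark is the correct route and is exactly what the paper does: since $\Phi$ is integrable and centered and $(\Omega,T,\mu)$ is ergodic, Atkinson's theorem (or \cite[Corollary 3.9]{Schmidt}, as in the paper) gives $\mathbb P(Z_n=0\ \mbox{i.o.})=1$, hence conservativity of the $\mathbb Z$-extension.

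For ergodicity, the essential-values strategy ($E(\Phi)=\mathbb Z$ via Schmidt's criterion) is legitimate in principle, but your sketch defers the entire difficulty: you never actually prove that $\mu\left(A\cap T^{-n}A\cap\{Z_n=a\}\right)>0$ for every positive-measure $A$ and every $a\in\mathbb Z$; the ``conditional local limit theorem on fresh scenery values'' argument is precisely the delicate point, since on a return excursion the walk keeps revisiting the sites whose scenery values are pinned down by the cylinder $A$, and you give no mechanism to control their contribution to $Z_n$ while forcing $Z_n=a$. The proposed fallback of quoting \cite{Weiss,Kalikow} does not help: those works concern the probability-preserving $T,T^{-1}$ system $(\Omega,T,\mu)$ (Bernoulli versus loosely Bernoulli), not the infinite-measure $\mathbb Z$-extension $(\widetilde\Omega,\widetilde T,\widetilde\mu)$, whose ergodicity is exactly what is at stake. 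The paper's argument is quite different and self-contained: it applies the Hopf--Hurewicz ratio ergodic theorem to functions $f$ depending on finitely many $x$-coordinates and a reference density $g=g_0(\ell)$, and shows the invariant limit $H_{(f,g)}$ is a.e.\ constant by successively removing its dependence on $\boldsymbol x$ (a two-sided tail argument using $f\circ\widetilde T^{\pm k}$ for large $k$), then on $\boldsymbol y$ (via $\widetilde T$-invariance with two values $x_0\ne x_0'$ in the support of $X_1$ and ergodicity of $\sigma^{x_0-x_0'}$ on the scenery), and finally on $\ell$ (because the support of $\xi_0$ generates $\mathbb Z$). If you want to keep the essential-values route, you would need to supply a genuine proof of $E(\Phi)=\mathbb Z$, which is not easier than the paper's ratio-theorem argument.
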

\begin{proof}
Since $(\Omega,T,\mu)$ is ergodic and since $\Phi$ is integrable and $\mu$-centered, we know (by \cite[Corollary 3.9]{Schmidt} combined with the Birkhoff ergodic theorem) that 
$\mathbb P(Z_n=0 \ i.o.)=1$, thus  that
$(\widetilde\Omega,\widetilde T,\widetilde\mu)$
 is recurrent (i.e. conservative).
Now let us prove that this system is also ergodic. Let $g:\widetilde\Omega\rightarrow(0,+\infty)$ be a positive $\widetilde\mu$-integrable function such that $g(\boldsymbol x,\boldsymbol y,\ell)=g_0(\ell)$ does not depend on $(\boldsymbol x,\boldsymbol y)\in\Omega$ and with unit integral ($g$ is a probability density function with respect to $\widetilde\mu$). By recurrence of $(\widetilde\Omega,\widetilde T,\widetilde\mu)$, we know that
\begin{equation}\label{ginfinite}
\sum_{k\ge 1}g\circ\widetilde T^k=\infty 
\end{equation}
$\widetilde\mu$-almost everywhere.
Let $K\in\mathbb N$. Consider $f:\widetilde\Omega\rightarrow\mathbb R$ a $\widetilde\mu$-integrable
function constant on the $K$-cylinders of the first coordinate, i.e. such that  $f(\boldsymbol x,\boldsymbol y,\ell)=f_0((x_m)_{|m|\le k},\boldsymbol y,\ell)$ does not depend on $(x_k)_{|k|>K}$.

Since $(\widetilde\Omega,\widetilde T,\widetilde\mu)$ is recurrent, the Hopf-Hurewicz's theorem (see e.g. \cite[p. 56]{Aa}) ensures that
\begin{equation}\label{Hopf}
\lim_{|n|\rightarrow +\infty}\frac{\sum_{k=1}^nf\circ\widetilde T^k}{\sum_{k=1}^ng\circ\widetilde T^k}=H_{(f,g)}:=\mathbb E_{g\widetilde\mu}\left[\left.\frac fg\right|\widetilde{\mathcal I}\right]
\end{equation}
$\widetilde\mu$-almost everywhere, where $\widetilde{\mathcal I}$ is the $\sigma$-algebra
of $\widetilde T$-invariant events.
Thus the ergodicity of $(\widetilde\Omega,\widetilde T,\widetilde\mu)$ will follow from the fact that $H_{(f,g)}$ is $\widetilde\mu$-almost everywhere constant for every $f$ as above ($g$ can be fixed). 
Observe that, for $k>K$, 
\begin{align*}
f\circ \widetilde T^{k}(\boldsymbol x,\boldsymbol y,\ell)&=
f\left(\sigma^k\boldsymbol x,\sigma^{x_0+...+x_{k-1}}\boldsymbol y,\ell+
      \sum_{m=0}^{k-1}y_{x_0+...+x_{m}}\right)\\
&=
f_0\left(x_{k-K},...,x_{K+k},\sigma^{x_0+...+x_{k-1}}\boldsymbol y,\ell+
      \sum_{m=0}^{k-1}y_{x_0+...+x_{m}}\right)
\end{align*}
does not depend on $(x_k)_{k\le -1}$. Analogously,  for $k>K$, 
\begin{align*}
f\circ \widetilde T^{-k}(\boldsymbol x,\boldsymbol y,\ell)&=
f\left(\sigma^{-k}\boldsymbol x,\sigma^{-x_{-1}-...-x_{-k}}\boldsymbol y,\ell-
      \sum_{m=1}^{k}y_{-x_{-1}-...-x_{-m}}\right)\\
&=
f_0\left(x_{-K-k},...,x_{-(k-K)},\sigma^{-x_{-1}-...-x_{-k}}\boldsymbol y,\ell-
      \sum_{m=1}^{k}y_{-x_{-1}-...-x_{-m}}\right)
\end{align*}
does not depend on $(x_k)_{k\ge 0}$. Of course $g\circ\widetilde T^k$ satisfies the same property. 
Thus, due to  \eqref{ginfinite} and \eqref{Hopf}, it follows that $H(f,g)(\boldsymbol x,\boldsymbol y,\ell)$ does not depend on $\boldsymbol x$.
Thus, $H_{(f,g)}(\boldsymbol x,\boldsymbol y,\ell)=H^{(0)}_{(f,g)}(\boldsymbol y,\ell)$ for $\widetilde\mu$-almost every $(\boldsymbol x,\boldsymbol y,\ell)\in\widetilde\Omega$.

By $\widetilde T$-invariance of $H_{(f,g)}$, given two distinct points $x_0,x'_0\in\mathbb Z$ such that $\mathbb P(X_1=x_0)\mathbb P(X_1=x'_0)>0$,
the following equality holds true almost everywhere
\[
H^{(0)}_{(f,g)}(\boldsymbol y,\ell)=H^{(0)}_{(f,g)}(\sigma^{x_0}\boldsymbol y,\ell+y_0)=H^{(0)}_{(f,g)}(\sigma^{x'_0}\boldsymbol y,\ell+y_0)\, ,
\]
where we write $\sigma$ for the usual shift on $\mathbb Z^{\mathbb Z}$ given by $\sigma((y_k)_{k\in\mathbb Z})=(y_{k+1})_{k\in\mathbb Z}$.
It follows that, for every $\ell\in\mathbb Z$, $H^{(0)}_{(f,g)}(\cdot,\ell)$ is $\sigma^{x_0-x'_0}$-invariant almost everywhere. By ergodicity of $\sigma^{x_0-x'_0}$, we conclude that $H_{(f,g)}(\boldsymbol x,\boldsymbol y,\ell)=H_{f,g}^{(1)}(\ell)$ depends only on $\ell$ almost everywhere. Since it is $\widetilde T$-invariant, for every $y_0\in\mathbb Z$ such that $\mathbb P(\xi_0=y_0)>0$,
$H_{f,g}^{(1)}(\ell)=H_{f,g}^{(1)}(\ell+y_0)$. Since the support of $y_0$
generates the group $\mathbb Z$, we conclude that $H_{(f,g)}$ is $\widetilde\mu$-almost everywhere equal to a constant.
\end{proof}
Note that the system in infinite measure $(\widetilde\Omega,\widetilde T,\widetilde\mu)$
describes the evolution in time $m$ of $((X_{m+k+1})_{k\in\mathbb Z},(\xi_{S_m+k})_k,Z_{m})$. In comparison, the system corresponding 
to $((X_{m+k+1})_k,S_{m})$ is also recurent ergodic, but the analogous system
corresponding to $((X_{m+k+1})_k,(\xi_{S_m+k})_k,S_{m})$ is recurrent (since $\mathbb P(S_n=0\ i.o.)=1$) not ergodic (since the
sets of the form $\{(x,y,\ell)\, :\, (y_{n-\ell})_n\in A_0\}$ are invariant).

\begin{proof}[Proof of Proposition~\ref{LLNbis}]
Since $(\widetilde\Omega,\widetilde T,\widetilde\mu)$ is recurrent ergodic, the Hopf ergodic theorem ensures that, for any $\widetilde f\in L^{1}(\widetilde\mu)$, the sequence
$\left(\frac{\sum_{k=0}^{n-1}\widetilde f\circ \widetilde T^k}{\sum_{k=0}^{n-1}\widetilde h_0\circ \widetilde T^k}\right)_{n\ge 0}$
converges $\widetilde\mu$-almost everywhere to $\frac{\int_{\widetilde\Omega}\widetilde f\, d\widetilde\mu}{\int_{\widetilde\Omega}\widetilde h_0\, d\widetilde\mu}=I(\widetilde f)$. Thus
\[
\left(\frac{\sum_{k=0}^{n-1}\widetilde f((X_{m+k+1})_{m\in\mathbb Z},(\xi_{m+S_k})_{m\in\mathbb Z},
Z_{k+m})}{\mathcal N_n(0)}=\frac{\sum_{k=0}^{n-1}\widetilde f\circ \widetilde T^k}{\sum_{k=0}^{n-1}\widetilde h_0\circ \widetilde T^k}((X_m)_{m\in\mathbb Z},(\xi_{m})_{m\in\mathbb Z},0)\right)_{n\ge 0}
\]
converges almost surely to $I(\widetilde f)$,
and we have proved 
the first part of the proposition.
The second part comes from the first part combined with \eqref{cvgeloi} and the Slustky theorem.
\end{proof}
\begin{proof}[Proof of Theorem~\ref{LLNter}]
Proposition~\ref{LLNbis} states that $n^{-\frac 14}\sum_{k=0}^{n-1}\widetilde f\circ\widetilde T^k$ converges in distribution, with respect to $\mu\otimes\delta_0\ll\widetilde\mu$, to
 $\int_{\widetilde\Omega}\widetilde f\, d\widetilde\mu\,\sigma_\xi^{-1}\mathcal L_{1}(0)$.
Thus, Theorem~\ref{LLNter} follows from Proposition~\ref{LLNbis} combined with
\cite[Theorem 1]{Zwei}.
\end{proof}
We end this section with an interpretation of  $\sigma^2_f$ in terms of the
famous Green-Kubo formula.
\begin{rqe}\label{exprsigma2}
Assume the assumptions of Theorem~\ref{PAPATCL}.
consider the function $\widetilde f:\widetilde\Omega\rightarrow \mathbb Z$
given by $\widetilde f(\boldsymbol x,\boldsymbol y,\ell):=f(\ell)$. Then $\sigma^2_f$ can be rewritten
\[
\sigma^2_f=\sum_{k\in\mathbb Z}\int_{\widetilde\Omega}\widetilde f.\widetilde f\circ \widetilde T^{|k|}\, d\widetilde \mu\, .
\]
\end{rqe}

\section{Proof of the central limit theorem: proof of Theorem~\ref{PAPATCL}}\label{proofPAPATCL}
We start by stating key intermediate results.
We recall that $d$
and $\alpha$ have been introduced in the beginning of Section \ref{mainresults}.
\begin{prop}\label{1:Point1}
Assume the assumptions of Theorem~\ref{PAPATCL}.
Let $M\in\mathbb N^*$ and $\eta>0$.
There exists $L\in(0,1)$ such that for every $\theta\in(0,1)$
the following holds true with the notations $n_j:=k_j-k_{j-1}$, with the convention $k_0=0$.

First,
\begin{equation}\label{majoproba}
\sum_{k'_j=0,...,d-1,\ \forall j\in\mathcal J}
 \mathbb E\left[  \prod_{j=1}^m \left(f(Z_{k_j+k'_j})\prod_{s=1}^{s_j}f(Z_{k_j+\ell_{j,s}})\right)\right]
=\mathcal O\left(\left(
\prod_{i=1}^mn_i^{-\frac 34}\right)\mathfrak E_{\boldsymbol k}
\right)
\, ,
\end{equation}
uniformly over the $\boldsymbol k=(k_1,...,k_m)$ and
$\boldsymbol \ell=(\ell_{j,s})_{j=1,...,m;s=1,...,s_j}$ such that
$n>k_j>k_{j-1}+n^\theta$ (with convention $k_0:=0$) and $\ell_{j,s}\in\{0,...,\lfloor n^{L\theta}\rfloor\}$  with $M=\sum_{j=1}^m(s_j+1)$, where we set $\mathcal J:=\{j=1,...,m\, : s_j=0\}$ and $k'_j=0$ if $j\not\in \mathcal J'$, and with
\[
\mathfrak E_{\boldsymbol k}=\mathcal O\left(
\sum_{\mathcal J'\subset\{1,...,m\}\, :\, \#\mathcal J'\ge \#\mathcal J/2}
\left(\prod_{j\in\mathcal J'}n_j^{-\frac 12+\eta}\right)\right)\, .
\]

Second, if $s_j= 1$ for all $j$, then
\[\mathbb E\left[   \prod_{j=1}^m\left(f(Z_{k_j})f(Z_{k_j+\ell_{j}})\right)\right]=
\frac{ d^m E_{\boldsymbol k}}{
(2\pi\sigma_\xi^2)^{\frac m2}
}\prod_{j=1}^m\mathcal A_{k_j,\ell_{j}}+\mathcal O\left(n^{-L(M+1)\theta}\prod_{j=1}^mn_j^{-\frac {3}4}\right)\,  ,\]
uniformly on $\boldsymbol k,\boldsymbol \ell$ as above,
with $E_{\boldsymbol k}$ depending on $\boldsymbol k$ but not on $\boldsymbol\ell$ and such that $E_{\boldsymbol k}=\mathcal O\left(\prod_{j=1}^mn_j^{-\frac 34}\right)$ uniformly on $\boldsymbol k$ as above, and
$E_{\boldsymbol k}\sim n^{-\frac{3m}4}\mathbb E\left[\det \mathcal D_{t_1,...,t_m}^{-\frac 12}\right]$ as $k_j/n\rightarrow t_j$ and $n\rightarrow +\infty$, 
with $\mathcal D_{t_1,...,t_m}=(\int_{\mathbb R} L_{t_i}(x)L_{t_j}(x)\, dx)_{i,j=1,...,m}$ where $L$ is the local time of the brownian motion $B$,
 limit of $(S_{\lfloor nt\rfloor}/\sqrt{n})_t$ as $n$ goes to infinity,
and where
\[
\mathcal A_{k,\ell}:=
\sum_{a\in k\alpha+d\mathbb Z,\ b\in\mathbb Z}\left(f(a)\prod_{s=1}^{m}f(b)\right)\mathbb P(Z_{\ell}=b-a)\, .
\]
Third, 
\begin{align*}
\sum_{k'_1,...,k'_m=0}^{d-1}
\sum_{\ell_1,...,\ell_m=0}^{n^{\frac{\kappa\theta\eta}{10 M}}}
2^{\#\{j:\ell_j>0\}}
\prod_{j=1}^m\mathcal A_{k_j+k'_j,\ell_j}
=\sigma^{2m}_f+o(1)
\, ,
\end{align*}
as $(k_1/n,...,k_m/n)\rightarrow (t_1,...,t_m)$ and $n\rightarrow +\infty$.
\end{prop}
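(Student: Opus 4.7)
The three parts share a common starting point: Fourier inversion applied to each factor $f(Z_{k_j+\ell})$ reduces the joint expectation to an integral over $\mathbb T^M$ of products $\hat f(u_{j,s})$ against $\mathbb E\bigl[e^{-i\sum_{j,s}u_{j,s}Z_{k_j+\ell_{j,s}}}\bigr]$, where $\hat f(u):=\sum_a f(a)e^{iau}$. Conditioning on the walk $(S_k)_k$, the remaining expectation factors over sites $y\in\mathbb Z$ into products of $\varphi_\xi$ evaluated at linear combinations of the $u_{j,s}$ weighted by local-time increments. This is exactly the scheme used in \cite{BFFN1,BFFN2}, which I would adapt here by exploiting the cancellation $\sum_a f(a)=0$: since $\hat f(0)=0$ and $\sum_a(1+|a|)|f(a)|<\infty$, one has $|\hat f(u)|=O(|u|)$ near the origin of $\mathbb T$.

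For the first estimate, the spacing condition $k_j-k_{j-1}\ge n^\theta$ combined with standard estimates on $\varphi_\xi$ away from $0$ localizes each $u_{j,s}$ at scale $n_j^{-3/4}$, producing the factor $\prod_i n_i^{-3/4}$. For each $j\in\mathcal J$ (blocks carrying a single observation) the extra smallness of $\hat f$ at $0$ yields an additional gain of size $n_j^{-1/2+\eta}$, where $\eta$ absorbs a small loss coming from imperfect $\mathbb T$-versus-$\mathbb R$ comparison. Choosing which half of $\mathcal J$ actually contributes its cancellation (the other indices being bounded trivially by $O(1)$ and paired by Cauchy--Schwarz with their neighbours, to salvage a $n_j^{-3/4}$ factor) produces the advertised error $\mathfrak E_{\boldsymbol k}$.

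For the second estimate, each block carries exactly two Fourier variables $(u_{j,0'},u_{j,1})$, and I would expand the block integral into its full $\mathbb R^2$-counterpart plus a remainder of size $O(n^{-L(M+1)\theta}\,n_j^{-3/4})$. The leading term factors into a universal prefactor $E_{\boldsymbol k}$ depending only on $\boldsymbol k$ (and capturing the joint distribution of the successive local times of $S$ through $\mathbb E[\det\mathcal D_{t_1,\ldots,t_m}^{-1/2}]$) times $\prod_j\mathcal A_{k_j,\ell_j}$; identity \eqref{FFF} together with the moment-convergence result from \cite[Theorem 3]{BFFN2} identifies $E_{\boldsymbol k}\sim n^{-3m/4}\mathbb E[\det\mathcal D_{t_1,\ldots,t_m}^{-1/2}]$ as $k_j/n\to t_j$. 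The prefactor $d^m$ and the lattice constraint $a\in k_j\alpha+d\mathbb Z$ in $\mathcal A_{k,\ell}$ encode the support of $Z_{k_j}$ in $k_j\alpha+d\mathbb Z$.

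For the third estimate, I would exchange the summation order: the contribution of each block becomes $\sum_{\ell'=0}^{d-1}\sum_{\ell\in\mathbb Z}\sum_{a,b}f(a)f(b)\,\mathbb P(Z_{|\ell'+d\ell|}=b-a)$, which equals $\sigma_f^2$ by the absolute convergence statement established at the beginning of Theorem~\ref{PAPATCL}; the tail beyond $n^{\kappa\theta\eta/(10M)}$ is negligible by the same absolute convergence, and taking the $m$-th power gives $\sigma_f^{2m}+o(1)$. The hard part is the second estimate: beating the remainder $n^{-L(M+1)\theta}\prod_j n_j^{-3/4}$ \emph{uniformly} in $\boldsymbol\ell\in\{0,\ldots,\lfloor n^{L\theta}\rfloor\}^m$ requires an Edgeworth-type rate in the conditional local limit theorem for $Z$, which is precisely where the moment assumption $\mathbb E[|\xi_0|^{2+\kappa}]<\infty$ and a careful analysis of boundary sites near each junction $k_{j-1}$ become indispensable.
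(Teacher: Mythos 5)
Your overall skeleton (Fourier inversion, conditioning on the walk so that the characteristic function factorizes over sites, splitting frequencies into high/intermediate/low regimes, and exploiting $\hat f(0)=0$ in the low-frequency regime) is indeed the paper's starting point, but the two places where the real work happens are missing or wrong. First, for estimate \eqref{majoproba} you treat the gain at an isolated index $j\in\mathcal J$ as coming directly from $|\hat f(u)|=\mathcal O(|u|)$. This fails when $d>1$: after reducing to the fundamental domain $[-\pi/d,\pi/d]$, the factor attached to $\theta_j$ is $\sum_{a\in (k_j+k'_j)\alpha+d\mathbb Z}f(a)e^{-ia(\theta_j-\theta_{j+1})}$, which does \emph{not} vanish at $\theta=0$ (only the sum over the $d$ residues $k'_j$ does); this is exactly why the statement sums over $k'_j=0,\dots,d-1$. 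Exploiting that residue sum is delicate because changing $k'_j$ shifts the walk by a few steps, so the paper has to compare the configurations via discrete difference operators acting on the scenery characteristic function restricted to the sites $\mathcal S'_j=\{S_{k_j},\dots,S_{k_j+d-1}\}$ (Lemma~\ref{LEMsum0}), and the indices where no $\mathcal O(|\theta_j|+|\theta_{j+1}|)$ factor can be extracted are precisely those where different segments of the walk collide; the constraint $\#\mathcal J'\ge\#\mathcal J/2$ then comes from the collision-probability bound of Lemma~\ref{LEMsum0bis} (clusters of colliding indices yield at least half as many $(k_j-k_j^-)^{-1/2}$ factors), not from your ``choose half of $\mathcal J$ and pair the rest by Cauchy--Schwarz with neighbours'', which is unjustified and is not the actual mechanism (note also that the gain may fall on $n_{j+1}$ rather than $n_j$, whence the set $\mathcal J'\cup(\mathcal J'+1)$ in the paper).

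Second, in the main term of the second estimate you assert that the low-frequency integral ``factors into $E_{\boldsymbol k}\prod_j\mathcal A_{k_j,\ell_j}$''. This factorization is not automatic: the Gram determinant $\det D_{\boldsymbol k}$ involves $N'_{j+1}$, whose time window $[k_j,k_{j+1})$ contains the short windows $[k_j,k_j+\ell_{j})$ generating the $N'_{j,s}$, so $(\det D_{\boldsymbol k})^{-1/2}$ and the variables $Z_{k_j+\ell_j}-Z_{k_j}$ are \emph{dependent}. The paper needs a separate coupling argument (Lemma~\ref{lemD-D''}): replace the increments on those short windows by independent copies, show that on a high-probability event $|(\det D_{\boldsymbol k})^{-1/2}-(\det D''_{\boldsymbol k})^{-1/2}|$ is $\mathcal O(n^{-\theta/8-L\theta})$ times controllable quantities, and only then factor the expectation. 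Without this step (or an equivalent decoupling device) the identification of the leading term, and hence the product structure $\prod_j\mathcal A_{k_j,\ell_j}$ needed in the third estimate, is not established; your third estimate is otherwise fine modulo the absolute summability of Lemma~\ref{1:Point2}, which itself requires its own proof.
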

\begin{proof}
The proof of Proposition~\ref{1:Point1} is based on several technical lemmas. For
reader convenience, the most technical points are proved in Appendix \ref{AppendA}.
Let $M\ge 1$, $\theta\in(0,1)$
and $\eta \in \left(0,\frac 1{100}\right)$.
Choose $L=\frac{\kappa\eta}{10M}$.
Assume $n^\theta<n_j<n$
and let $\ell_{j,1},...,\ell_{j,s_j}=0,..., \lfloor n^{
L\theta}\rfloor$ 
with $\sum_{j=1}^m(1+s_j)=M$.
We set $N'_j(y):=
\#\{s=0,...,n_j-1\, :\, S_{k_{j-1}+s}=y\}$,
$ N_{j}^* := \sup_y N'_j$ and $R'_{j} := \#\{y\in\mathbb Z\, :\, N'_j
(y)>0\}$.
Analogously, we set $N'_{j,s}=\#\{s=0,...,\ell_{j,s}-1\, :\, S_{k_{j}+s}=y\}$.
The left hand side of \eqref{majoproba} can be written
\begin{equation}\label{CCCC1}
B_{\boldsymbol k,\boldsymbol\ell}=
\sum_{\boldsymbol a,\boldsymbol b}\left(\prod_{j=1}^m\left( 
f(a_j)
\prod_{s=1}^{s_j}f(b_{j,s})
\right)
\right)p_{\boldsymbol k,\boldsymbol\ell}({\boldsymbol a,\boldsymbol b})\, ,
\end{equation}
where $\sum_{\boldsymbol a,\boldsymbol b}$ means the sum over $(\boldsymbol a,\boldsymbol b)\in\mathbb Z^M$ with $\boldsymbol a=(a_1,...,a_m)
$ and
$\boldsymbol b=(b_{j,s})_{j=1,...,m;s=1,...,s_j}$,
with the convention $a_0=0$ and
\begin{align}
\nonumber&p_{\mathbf{k},\boldsymbol{\ell}}(\boldsymbol{a},\boldsymbol{b})=\mathbb P(\forall j=1,...,m,\ Z_{k_j}=a_j,\ \forall s=1,...,s_j,\ Z_{k_j+\ell_{j,s}}=b_{j,s})\, .
\end{align}
An classical computation (detailed in Appendix \ref{AppendA}) ensures  the following.
\begin{lem}\label{LEM1}
\begin{align}
p_{\mathbf{k},\boldsymbol{\ell}}(\boldsymbol{a},\boldsymbol{b})
=\mathbf 1_{\{\forall i,\, a_i=k_i\alpha+d\mathbb Z\}}\frac {d^m}{(2\pi)^M}
   \int_{[-\frac{\pi}d,\frac\pi d]^{m}\times[-\pi,\pi]^{M-m}} e^{-i\sum_{j=1}^m[(a_j-a_{j-1})\theta_j+\sum_{s=1}^{s_j}(b_{j,s}-a_j)\theta'_{j,s}]}\varphi_{\boldsymbol{k},\boldsymbol{\ell}}(\boldsymbol\theta,\boldsymbol\theta')
   \, d(\boldsymbol\theta,\boldsymbol \theta')\, .\label{1:EqFourier}
\end{align}
with  $\boldsymbol\theta=(\theta_j)_{j=1,...,m}$ and $\boldsymbol\theta'=(\theta'_{j,s})_{j=1,...,m;s=1,...,s_j}$ and
\begin{eqnarray}
\label{1:phini0}
\varphi_{\boldsymbol{k},\boldsymbol{\ell}}(\boldsymbol\theta,\boldsymbol\theta')
 = \EE \left[\prod_{y\in \ZZ}
  \varphi_{\xi}\left( \sum_{j=1}^m\left(\theta_jN'_{j}(y)+\sum_{s=1}^{s_j}\theta'_{j,s}N'_{j,s}(y))\right)\right)\right]\, .
\end{eqnarray}
\end{lem}
For any event $E$ and any $I\subset [-\frac{\pi}d,\frac\pi d]^{m}\times[-\pi,\pi]^{M-m}$, we also set
\begin{align}
\label{1:phini000}
\varphi_{\boldsymbol{k},\boldsymbol{\ell}}(\boldsymbol\theta,\boldsymbol\theta',E)
 = \EE \left[\mathbf 1_E\prod_{y\in \ZZ}
  \varphi_{\xi}\left( \sum_{j=1}^m\left(\theta_jN'_{j}(y)+\sum_{s=1}^{s_j}\theta'_{j,s}N'_{j,s}(y))\right)\right)\right]\, ,
\end{align}
\begin{align}
p_{\mathbf{k},\boldsymbol{\ell}}(\boldsymbol{a},\boldsymbol{b},I,E)
=\mathbf 1_{\{\forall i,\, a_i=k_i\alpha+d\mathbb Z\}}\frac {d^m}{(2\pi)^M}   \int_{I} e^{-i\sum_{j=1}^m[(a_j-a_{j-1})\theta_j+\sum_{s=1}^{s_j}(b_{j,s}-a_j)\theta'_{j,s}]}\varphi_{\boldsymbol{k},\boldsymbol{\ell}}(\boldsymbol\theta,\boldsymbol\theta',E')
   \, d(\boldsymbol\theta,\boldsymbol \theta')\, .\label{1:EqFourierbisbis}
\end{align}
and
\begin{equation}\label{BIE}
B_{\boldsymbol k,\boldsymbol\ell,I,E}=
\sum_{\boldsymbol a,\boldsymbol b}\left(\prod_{j=1}^m\left( 
f(a_j)
\prod_{s=1}^{s_j}f(b_{j,s})
\right)
\right)p_{\boldsymbol k,\boldsymbol\ell}({\boldsymbol a,\boldsymbol b},I,E)\, .
\end{equation}

Let $\gamma<\min(L\theta,\frac{\eta\theta} {2M})$. 
Let $\theta'\in(0,\frac {\theta\eta}2)$ such that $\theta'\le\frac \theta 2-2M
L\theta$.
We consider the set
\begin{equation}\label{defOmegak}
\Omega_{\boldsymbol k} := \left\{\det D_{\boldsymbol k}\ge n^{-\theta'}\prod_{i=1}^mn_i^{\frac 32}\right\}\cap\bigcap_{j=1}^m\Omega^{(j)}_{\boldsymbol k}\, ,
\end{equation}
with
\[
\Omega^{(j)}_{\boldsymbol k}:=
\left\{   
\sup_{r=0,...,n_j}|S_{r+k_{j-1}}-S_{k_{j-1}}| \le \frac {n_j^{\frac 1 2+\gamma}}3,\  \quad \sup_{y \ne z} 
\frac{\vert N'_{j}(y)-N'_{j}(z)\vert}{|y-z|^{\frac{1}{2}}} 
\leq n_j^{\frac 1 4 +\frac\gamma 2} 
\right\}\, ,
\]
and with $D_{\boldsymbol k}=\left(\sum_{y\in\mathbb Z}N'_{i}(y)N'_{j}(y)\right)_{i,j}$.
The following lemma follows from \cite{BFFN2} (see appendix \ref{AppendA} for details).
\begin{lem}\label{LEM2}
For any $p>1$, $\PP(\Omega_{\boldsymbol k})=1-o(n^{-p})$,
and so
$B_{\mathbf{k},\boldsymbol{\ell},
\left[-\frac{\pi}d,\frac\pi d\right]^{M},\Omega_{\boldsymbol k} ^c}
=o(n^{-p})$.
\end{lem}
Note that, on $\Omega_{\boldsymbol k}$,
\begin{align}
&R'_j\le n_j^{\frac 12+\gamma}\, ,\\
&N_j
^*=|N_j^*-0|\le n_j^{\frac 14+\frac \gamma 2}((n_j)^{\frac 12+\gamma})^{\frac 12}
\ll n_j^{\frac 12+\frac\eta 2}\, ,\\
&V_{j}:=\sum_{z\in\mathbb Z} (N'_{j}(z))^2 \ge \frac{\left(\sum_{z\in\mathbb Z}N'_j(z)\right)^2}{R'_j}\ge\frac{n_j^2}{n_j^{\frac 12+\gamma}}\ge n_j^{\frac 32 - \frac\eta 2}\, ,\label{1:minVn}\\
&V_{j}\le R'_j(N_j^*)^2\le n_j^{\frac{3(1+\eta)}2}\, .\label{1:maxVn}
\end{align} 
It will be useful to notice that
\begin{equation}\label{majovarphi}
\left|\varphi_{\boldsymbol{k},\boldsymbol{\ell}}(\boldsymbol\theta,\boldsymbol\theta', E)\right|\le
\EE \cro{\mathbf 1_E\prod_{y\in\mathcal F} \left|\varphi_{\xi}\left(\sum_{j=1}^m\theta_jN'_{j}(y)\right)\right|}
\end{equation}
with
\[
\mathcal F:=\left\{y\in\mathbb Z\, :\, \forall (j,s),\ N'_{j,s}(y)=0\right\}\, ,
\]
and that
\begin{equation}\label{Z-F}
\#(\mathbb Z\setminus\mathcal F)\le \sum_{j=1}^m\sum_{s=1}^{s_j}\ell_{j,s}\le M n^{L\theta}=o(n^{\frac 14})\, .
\end{equation}
Using a straighforward adaptation of the proof of \cite[Proposition 10]{BFFN1}, we prove (see Appendix \ref{AppendA}) that
\begin{lem}\label{LEM3}
\begin{equation}\label{1:Neglec00}
B_{\boldsymbol k,\boldsymbol\ell,I^{(1)}_{\boldsymbol k},\Omega_{\boldsymbol k}}=o\left(e^{- n^c}\right) \, ,
\end{equation}
uniformly on $\boldsymbol k,\boldsymbol\ell$ as in Proposition~\ref{1:Point1},
where $I^{(1)}_{\boldsymbol k}$ is the set of $(\boldsymbol\theta,\boldsymbol\theta')\in\left[-\frac\pi d,\frac \pi d\right]^m\times[-\pi,\pi]^{M-m}$ such that there exists
$j=1,...,m$ so that $n_j^{-\frac 12+{\eta}}<|\theta_j|$.
\end{lem}

\begin{lem}\label{LEM4}
\begin{equation}\label{1:Neglec11}
B_{\boldsymbol k,\boldsymbol\ell,I^{(2)}_{\boldsymbol k},\Omega_{\boldsymbol k}}=
\mathcal O\left(\prod_{j=1}^mn_j^{-\frac 54+\eta}
\right)
 \, ,
\end{equation}
uniformly on $\boldsymbol k,\boldsymbol\ell$ as in Proposition~\ref{1:Point1},
where $I^{(2)}_{\boldsymbol k}$ is the set of $(\boldsymbol\theta,\boldsymbol\theta')\in\left[-\frac\pi d,\frac \pi d\right]^m\times[-\pi,\pi]^{M-m}$ such that for all
$j=1,...,m$, $|\theta_j|<n_j^{-\frac 12+{\eta}}$ and there exists
$j'=1,...,M$ such that $n_{j'}^{-\frac 12-{\eta}}<|\theta_{j'}|$.\\
\end{lem}
It remains to estimate the integral over  $I^{(3)}_{\boldsymbol k}$, the set of $(\boldsymbol\theta,\boldsymbol\theta')\in\left[-\frac\pi d,\frac \pi d\right]^m\times[-\pi,\pi]^{M-m}$ such that for all
$j=1,...,m$, $|\theta_j|<n_j^{-\frac 12-{\eta}}$.\\
We set $\mathcal J:=\{j=1,...,m\, : s_j=0\}=\{j(1),...,j(J)\}$.
\begin{lem}\label{LEM5}
Under the assumptions of Theorem~\ref{PAPATCL} with  $\sum_{a\in\mathbb Z} f(b+ad)=0$ for all $b\in\mathbb Z$.
Let $\mathcal J'\subset\mathcal J$, then
\begin{equation}\label{1:Neglec22}
\sum_{k'_j=0,...,d-1,\ \forall j\in\mathcal J'}
B_{\boldsymbol k+\boldsymbol k',\boldsymbol\ell,I^{(3)}_{\boldsymbol k},\Omega_{\boldsymbol k}}=
 \mathcal O\left(
\left(\prod_{j=1}^mn_j^{-\frac {3}4}\right)
\sum_{\mathcal J''\subset \mathcal J'\cup\mathcal (\mathcal J'+1)\, :\, \#\mathcal J''\ge \mathcal J/2}
\left(\prod_{j\in\mathcal J''}n_j^{-\frac 12+\eta}\right)\right)\, ,
\end{equation}
uniformly on $\boldsymbol k,\boldsymbol\ell$ as in Proposition~\ref{1:Point1},
and where we set $\boldsymbol k'=(k'_1,...,k'_m)$ with $k'_j=0$ if $j\not\in \mathcal J'$.

Moreover, if $s_j=1$ for all $j$ (and $\mathcal J'=\emptyset$), then,
\begin{align*}
B_{\boldsymbol k+\boldsymbol k',\boldsymbol \ell,I^{(3)}_{\boldsymbol k},\Omega_{\boldsymbol k}}&=\left(\frac d{\sqrt{2\pi}\sigma_\xi}\right)^{ m}\sum_{a_1,...,a_m\in\mathbb Z}\mathbf 1_{\{\forall i,\, a_i=k_i\alpha+d\mathbb Z\}}\mathbb E\left[ (\det D_{\boldsymbol k})^{-\frac 12}\mathbf 1_{\Omega_{\boldsymbol k}}\right]\\
&\ \ \ \ \prod_{j=1}^mf(a_j)\mathbb E\left[ f\left(a_j+Z_{\ell_j}\right) \right]+\mathcal O\left(n^{-(M+1)L\theta}\prod_{j=1}^mn_j^{-\frac {3}4}\right)\, ,
\end{align*}
uniformly on $\boldsymbol k,\boldsymbol\ell$ as above,
with
\[
\mathbb E\left[ (\det D_{\boldsymbol k})^{-\frac 12}\mathbf 1_{\Omega_{\boldsymbol k}}\right]=\mathcal O\left(\prod_{j=1}^mn_j^{-\frac 34}\right)\, ,
\]
uniformly on $\boldsymbol k$ as above,
and
\[
\mathbb E\left[ (\det D_{\boldsymbol k})^{-\frac 12}\mathbf 1_{\Omega_{\boldsymbol k}}\right]\sim n^{-\frac{3m}4}
\mathbb E\left[ \det \mathcal D_{t_1,...,t_m}^{-\frac 12}\right]\, .
\]
 as $k_j/n\rightarrow t_j$ and $n\rightarrow +\infty$.
\end{lem}
We can now complete the proof of Proposition ~\ref{1:Point1}.
The two first points of Proposition ~\ref{1:Point1} comes from the upper bounds provided by Lemmas \ref{LEM1}, \ref{LEM2}, \ref{LEM3}, \ref{LEM4} and \ref{LEM5}, with $E_{\boldsymbol k}:=\mathbb E\left[ (\det D_{\boldsymbol k})^{-\frac 12}\mathbf 1_{\Omega_{\boldsymbol k}}\right]$.
It remains to prove the last point of Proposition~\ref{1:Point1}. We assume that $s_j=1$ for all $j$ and that $k_j/n\rightarrow t_j$ and $n\rightarrow +\infty$.
Recall that $d_0=\min\{n\ge 1\, :\, n\xi_0\in d\mathbb Z\}=\min\{n\ge 1\, :\, n\alpha\in d\mathbb Z\}$. 
Observe that, for every $a_j\in\mathbb Z$ there is a unique $k'\in\{0,...,d_0-1\}$ such that $a_j\in (k_j+k'_j)\alpha+d\mathbb Z$. Thus
\begin{align*}
\sum_{k'_1,...,k'_m=0}^{d-1}
\sum_{\ell_1,...,\ell_m=0}^{n^{\frac{\kappa\theta\eta}{10 M}}}
2^{\#\{j:\ell_j>0\}}
\prod_{j=1}^m\mathcal A_{k_j+k'_j,\ell_j}
=
 \sum_{\ell_1,...,\ell_m=0}^{n^{\frac{\kappa\theta\eta}{10 M}}}
2^{\#\{j:\ell_j>0\}}
\sum_{a_j,\, b_j\in\mathbb Z}
\prod_{j=1}^mf(a_j)f(b_j)\mathbb P\left( Z_{\ell_j}=b_j-a_j\right)\, .
\end{align*}
Finally, due  to the last point of Lemma~\ref{LEM5} and to the next lemma, this quantity is equivalent to
\begin{align*}
\sum_{\ell_1,...,\ell_m \ge 0}
2^{\#\{j:\ell_j>0\}}
\sum_{a_j,\, b_j\in\mathbb Z}
\prod_{j=1}^mf(a_j)f(b_j)\mathbb P\left( Z_{\ell_j}=b_j-a_j\right)\, ,
\end{align*}
as $k_j/n\rightarrow t_j$ and $n\rightarrow +\infty$.
\end{proof}
\begin{lem}\label{1:Point2}
Under the assumptions\footnote{Our proof is valid in a more general context. The assumptions on $f$ and $S$ can be relaxed in
$\sum_{a\in\mathbb Z}|af(a)|<\infty$, $\sum_{a\in\mathbb Z}f(a)=0$,
 and $\Vert S_n\Vert_{L^{\frac 83}}=O(\sqrt{n})$.} of Theorem~\ref{PAPATCL},
\[
\sum_{\ell\ge 1}\left|\sum_{\ell'=0}^{d
-1}\sum_{a,b\in\mathbb Z}f(a)f(b)\mathbb P(Z_{\ell'+\ell d}=b-a)\right|<\infty
\]
\end{lem}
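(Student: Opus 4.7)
The plan is to exploit the cancellation $\sum_a f(a)=0$ to upgrade the baseline $n^{-3/4}$ decay of $\mathbb{P}(Z_n=c)$ to a summable rate, by passing to the distribution of $Z$ averaged over consecutive shifts $\ell' \in \{0,\ldots,d-1\}$. First I would rewrite the inner double sum as an autocorrelation. Setting $g(c):=\sum_a f(a)f(a+c)$, the hypothesis $\sum_a(1+|a|)|f(a)|<\infty$ gives $\sum_c|g(c)|\le\|f\|_1^2<\infty$ and $\sum_c|c||g(c)|\le 2\|f\|_1\sum_a|a||f(a)|<\infty$ via $|b-a|\le|a|+|b|$, while $\sum_a f(a)=0$ gives $\sum_c g(c)=0$. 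Substituting $c=b-a$ yields
\[
A_\ell := \sum_{\ell'=0}^{d-1}\sum_{a,b}f(a)f(b)\,\mathbb{P}(Z_{\ell d+\ell'}=b-a)=\sum_c g(c)\,\bar p_\ell(c),\quad \bar p_\ell(c):=\sum_{\ell'=0}^{d-1}\mathbb{P}(Z_{\ell d+\ell'}=c).
\]
Since $Z_n$ is supported on $n\alpha+d\mathbb{Z}$ and $\gcd(\alpha,d)=1$, the $d$ consecutive shifts cover every residue class modulo $d$: for each $c\in\mathbb{Z}$ exactly one $\ell'$ contributes, and $\bar p_\ell$ becomes a genuine mass function on all of $\mathbb{Z}$ amenable to smooth approximation.

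Applying the local limit theorem for $Z_n$ (cf.\ \cite{BFFN1}) yields uniformly in $c$
\[
\bar p_\ell(c)=(\ell d)^{-3/4}\phi\!\left(c(\ell d)^{-3/4}\right)+r_\ell(c),\qquad \sup_c|r_\ell(c)|=o\!\left((\ell d)^{-3/4}\right),
\]
where $\phi$ is the bounded Lipschitz density of $\sigma_\xi\Delta_1$. Since $\sum_c g(c)=0$, the constant $\phi(0)(\ell d)^{-3/4}$ may be subtracted inside the sum without changing $A_\ell$, and the Lipschitz bound gives
\[
|A_\ell|\le\frac{\|\phi'\|_\infty}{(\ell d)^{3/2}}\sum_c|c||g(c)|+\|g\|_1\sup_c|r_\ell(c)|=O(\ell^{-3/2})+\|g\|_1\sup_c|r_\ell(c)|.
\]
The leading term is summable, so the remaining task is to control the LLT remainder with a rate summable in $\ell$.

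The main obstacle is extracting this quantitative remainder: a naive LLT only gives $r_\ell=o(\ell^{-3/4})$, whereas summability requires better than $\ell^{-1}$. I would bypass this by working directly with the Fourier inversion
\[
A_\ell=\frac{1}{2\pi}\int_{-\pi}^\pi|\hat f(t)|^2\,\Psi_\ell(t)\,dt,\qquad \Psi_\ell(t):=\sum_{\ell'=0}^{d-1}\varphi_{Z_{\ell d+\ell'}}(t),
\]
using $|\hat f(t)|^2=O(t^2)$ near $0$ (from $\hat f(0)=0$ and $\hat f\in C^1$). I would split $[-\pi,\pi]$ into three regions: (i) a neighborhood of $0$, where the $O(t^2)$ vanishing combined with $|\Psi_\ell(t)|\lesssim d\,\mathbb{E}[e^{-ct^2V_{\ell d}}]$ and $V_n\gtrsim n^{3/2}$ (with high probability, by Cauchy-Schwarz and $R_n\lesssim \sqrt n$) yields an $O(\ell^{-9/4})$ contribution; (ii) small neighborhoods of the secondary lattice frequencies $t=2\pi k/d$, $1\le k\le d-1$, where the identity $\varphi_{Z_n}(2\pi k/d+s)=e^{2\pi ik\alpha n/d}\varphi_{Z_n}(s)$ makes $\Psi_\ell$ acquire the phase sum $\sum_{\ell'=0}^{d-1}e^{2\pi ik\alpha\ell'/d}=0$ (thanks to $\gcd(\alpha,d)=1$), producing the cancellation that kills the peak of $|\varphi_\xi|$ there; (iii) the far region, negligible by super-polynomial decay of $|\varphi_{Z_n}(t)|$ in $R_n$. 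Region (ii) is the most delicate: it requires a careful Taylor expansion of $\varphi_\xi$ around each $2\pi k/d$ whose remainder is controlled by the $2+\kappa$ moment on $\xi_0$, and tracking the interplay between the $\ell'$-phase cancellation and the variations $\varphi_{Z_{\ell d+\ell'}}(s)-\varphi_{Z_{\ell d}}(s)$ arising from the $O(1)$ differences in the walk length, precisely enough to make the bound summable in $\ell$.
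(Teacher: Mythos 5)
Your regions (i) and (iii) essentially reproduce what the paper does, in different packaging: the paper writes each $\mathbb P(Z_{\ell d+\ell'}=b-a)$ via the inversion formula of Lemma~\ref{LEM1} (case $m=M=1$), which is already restricted to the fundamental domain $[-\frac\pi d,\frac\pi d]$, discards $|t|>\ell^{-3/4+\eta}$ by the single-time estimates of \cite{BFFN1} on the good event $\Omega_{\ell d}$, replaces $N_{\ell d+\ell'}$ by $N_{\ell d}$ (at most $d$ local times change), exploits $\sum_{a,b}f(a)f(b)=0$ through $|e^{-it(b-a)}-1|\le|t|\,|b-a|$, and concludes with $\int|t|e^{-\sigma_\xi^2t^2V_{\ell d}/4}dt=\mathcal O(V_{\ell d}^{-1})=\mathcal O(\ell^{-3/2+\varepsilon})$ on $\Omega_{\ell d}$. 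Your near-zero bound $\int t^2e^{-ct^2V_{\ell d}}dt=\mathcal O(V_{\ell d}^{-3/2})$ is the same mechanism (even a touch stronger), and your far region rests on the same \cite{BFFN1} input; note that inside your region (i) the range $\ell^{-3/4+\eta}\le|t|\le\delta$ is not covered by the sub-Gaussian domination and must also be handled by those intermediate-range estimates, exactly as the paper cites Propositions 8--10 of \cite{BFFN1}.

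The genuine gap is region (ii). After the exact identity $\varphi_{Z_n}(\frac{2\pi k}d+s)=e^{2\pi ikn\alpha/d}\varphi_{Z_n}(s)$ and the phase cancellation $\sum_{\ell'=0}^{d-1}e^{2\pi ik\alpha\ell'/d}=0$, what survives is $\sum_{\ell'}e^{2\pi ik\alpha\ell'/d}\bigl(\varphi_{Z_{\ell d+\ell'}}(s)-\varphi_{Z_{\ell d}}(s)\bigr)$, integrated against $|\hat f(\frac{2\pi k}d+s)|^2$, which does \emph{not} vanish at $s=0$. The crude Lipschitz bound $|\varphi_{Z_{\ell d+\ell'}}(s)-\varphi_{Z_{\ell d}}(s)|\le d\,\mathbb E|\xi_0|\,|s|$ yields, after integration over a fixed neighbourhood, a contribution that is $\mathcal O(1)$ uniformly in $\ell$, hence not summable; so the "interplay" you defer is precisely the missing estimate, and the tool you suggest for it (a Taylor expansion of $\varphi_\xi$ around $\frac{2\pi k}d$ with a $2+\kappa$-moment remainder) is off target: the behaviour at the secondary frequencies is governed by the exact lattice relation, and the paper's proof of this lemma only ever uses the second moment through $|\varphi_\xi(u)|\le e^{-\sigma_\xi^2u^2/4}$ for small $u$. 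What closes the gap is a conditioning argument: conditioning on the walk and on the scenery at the at most $d$ sites $\mathcal S_{\ell'}:=\{S_{\ell d},\dots,S_{\ell d+\ell'-1}\}$ gives
\[
\bigl|\varphi_{Z_{\ell d+\ell'}}(s)-\varphi_{Z_{\ell d}}(s)\bigr|\le \mathbb E\Bigl[\min\bigl(2,|s|\,\bigl|Z_{\ell d+\ell'}-Z_{\ell d}\bigr|\bigr)\prod_{y\notin\mathcal S_{\ell'}}\bigl|\varphi_\xi\bigl(sN_{\ell d}(y)\bigr)\bigr|\Bigr]\, ,
\]
so the difference retains the sub-Gaussian factor $e^{-cs^2V_{\ell d}}$ on $\Omega_{\ell d}$ for $|s|\le\ell^{-3/4+\eta}$ (removing at most $d$ sites changes $V_{\ell d}$ by at most $d(N^*_{\ell d})^2\ll V_{\ell d}$), and then $\int|s|e^{-cs^2V_{\ell d}}ds=\mathcal O(V_{\ell d}^{-1})=\mathcal O(\ell^{-3/2+\varepsilon})$, while the range $\ell^{-3/4+\eta}\le|s|\le\delta$ reduces to the same \cite{BFFN1} bounds as near $0$ because $|\varphi_{Z_n}|$ is $\frac{2\pi}d$-periodic. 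This is the paper's "replace $N_{\ell d+\ell'}$ by $N_{\ell d}$" step in disguise; once you add it your route goes through, but it is then no longer genuinely different from the paper's, which simply never meets the secondary frequencies thanks to the restricted inversion formula of Lemma~\ref{LEM1}.
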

\begin{proof}
The proof of this lemma only uses estimates established in \cite{BFFN1}.
Since $\sum_{a,b}|f(a)f(b)|<\infty$ and using Lemma \ref{LEM1}, we observe that
\begin{align*}
&\left|\sum_{\ell'=0}^{d
-1}\sum_{a,b\in\mathbb Z}f(a)f(b)\mathbb P(Z_{\ell'+\ell d}=b-a)\right|\\
&=
\left|\sum_{\ell'=0}^{d
-1}\sum_{a\in\mathbb Z}\sum_{b\in a+(\ell d+\ell')\alpha+d\mathbb Z}f(a)f(b)\frac {d}{2\pi} 
   \int_{[-\frac{\pi}d,\frac\pi d]} e^{-it(b-a)}\mathbb E\left[\prod_{y\in\mathbb Z}\varphi_\xi\left(tN_{\ell d+\ell'}(y)\right)\right]
   \, dt\right|
\, .
\end{align*}
Moreover, due to \cite[Propositions 8,9,10]{BFFN1},
$\mathbb P(\Omega_k)=1-o(k^{-1-\eta_0})$
(due to \cite[Lemma 16]{BFFN2}), and due the fact that $|\varphi_\xi(tN_k(y))|\le e^{-\frac{\sigma^2_\xi (tN_k(y))^2}4}$
on $\Omega_{k}$ when $|t|\le k^{-\frac 34+\eta}$, we have
\begin{align*}
&\int_{[-\frac{\pi}d,\frac\pi d]} e^{-it(b-a)}\, \mathbb E\left[\prod_{y\in\mathbb Z}\varphi_\xi\left(tN_{\ell d+\ell'}(y)\right)\right]\\
&=\int_{|t|\le \ell^{-\frac 34+\eta}} e^{-it(b-a)}\,\mathbb E\left[\prod_{y\in\mathbb Z}\varphi_\xi\left(tN_{\ell d+\ell'}(y)\right)\mathbf 1_{\Omega_{\ell d+\ell'}}\right]
   \, dt+o(\ell^{-1-\eta_0})\\
&=\int_{|t|\le \ell^{-\frac 34+\eta}} e^{-it(b-a)}\,\mathbb E\left[\prod_{y\in\mathbb Z}\varphi_\xi\left(tN_{\ell d}(y)\right)\mathbf 1_{\Omega_{\ell d}}\right]
   \, dt+o(\ell^{-1-\eta_0})\, ,
\end{align*}
using also the fact that $\#\{y\in\mathbb Z\, :\, N_{\ell d}(y)\ne N_{\ell d+\ell'}(y)\}\le d$. 
It follows that
\begin{align*}
&\left|\sum_{\ell'=0}^{d-1}\sum_{a,b\in\mathbb Z}f(a)f(b)\mathbb P(Z_{\ell'+\ell d}=b-a)\right|\\
&=\left| \frac {d}{2\pi}   \int_{|t|\le \ell^{-\frac 34+\eta}}\sum_{a,b\in\mathbb Z}f(a)f(b) \left(e^{-it(b-a)}-1\right)\mathbb E\left[\prod_{y\in\mathbb Z}\varphi_\xi\left(tN_{\ell d}(y)\right)\mathbf 1_{\Omega_{\ell d}}\right]   \, dt\right|+o(\ell^{-1-\eta_0})\\
&\le \frac {d}{2\pi}   \int_{|t|\le \ell^{-\frac 34+\eta}}\sum_{a,b}\left|f(a)f(b) t(b-a) \right|\, \mathbb E\left[ e^{-\frac{\sigma^2_\xi t^2V_{\ell d}}4}\mathbf 1_{\Omega_{\ell d}}\right]   \, dt+o(\ell^{-1-\eta_0})\\
&\le C   \mathbb E\left[V_{\ell d}^{-1} \mathbf 1_{\Omega_{\ell d}} \right]+o(\ell^{-1-\eta_0})\, ,
\end{align*}
since $\sum_{a,b\in\mathbb Z}f(a)f(b)=0$, $\sum_{a\in\mathbb Z}|af(a)|<\infty$ and
using the change of variable $v=tV_{\ell d}^{\frac 12}$. Now, due to  \eqref{1:minVn},
$V_{\ell d}^{-1}\mathbf 1_{\Omega_{\ell d}}\le \ell^{-\frac 32-2\gamma}=\mathcal O(\ell^{-1-\eta_0})$ up to take $\eta_0$ small enough, which ends the proof of the lemma. 
\end{proof}

Theorem~\ref{PAPATCL} follows directly from the following corollary of Proposition~\ref{1:Point1} and Lemma~\ref{1:Point2}, since 
$\mathbb E[\mathcal N^{2N}]=\frac{(2N)!}{ N!2^N}$ and $\mathbb E[(\mathcal L_1(0))^{N}]=\int_{[0,1]^N}\frac{\mathbb E\left[\det \mathcal D_{t_1,...,t_N}^{-\frac 12}\right]}{(2\pi)^{\frac N2}}\, dt_1...dt_N$ (due to \cite{BFFN2}).
\begin{coro}\label{cvgcemoments}[A rewritting of Theorem~\ref{PAPATCL}]Under the assumptions of Theorem~\ref{PAPATCL},
\[
\mathbb E\left[\left(\sum_{k=1}^nf(Z_k)\right)^{2N+1}\right]=o\left(n^{\frac {2N+1}8}\right)\, ,
\]
and
\[
\mathbb E\left[\left(\sum_{k=1}^nf(Z_k)\right)^{2N}\right]= 
\frac{(2N)!}{ N!2^N}n^{\frac {2N}8}
\frac{\sigma^{2N}_f}{(2\pi \sigma_\xi^2)^{\frac N 2}}
\int_{[0,1]^N}
\mathbb E\left[\det \mathcal D_{t_1,...,t_N}^{-\frac 12}\right]\, dx_1...dx_N\, .
\]
\end{coro}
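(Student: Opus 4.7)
The plan is a moments method. I expand the $M$-th moment as an $M$-fold sum and group the summands according to the cluster structure of the indices. Fix $\theta\in(0,1)$ small and $L$ the constant produced by Proposition~\ref{1:Point1}, and declare two coordinates in an $M$-tuple $(i_1,\ldots,i_M)\in\{1,\ldots,n\}^M$ to be in the same cluster when the corresponding positions differ by at most $n^{L\theta}$. Each resulting set partition $\pi$ of $\{1,\ldots,M\}$ with $m$ blocks of sizes $r_1,\ldots,r_m$ parametrises its contributing configurations by ordered cluster bases $K_1<\cdots<K_m$ with gaps $>n^\theta$, by internal offsets $\ell_{j,s}\in\{0,\ldots,n^{L\theta}\}$, and by a combinatorial factor counting the assignments of the original indices to positions inside each block (for a pair this factor is exactly $2^{\#\{j:\ell_j>0\}}$, matching the third part of Proposition~\ref{1:Point1}). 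The intermediate regime in which some consecutive cluster bases are separated by a distance in $(n^{L\theta},n^\theta]$ is absorbed by collapsing the offending clusters into a single coarser one and reapplying the first part of Proposition~\ref{1:Point1} with a smaller $m$, showing this regime to be $o(n^{M/8})$.

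For $M=2N+1$ no partition consists only of pairs. Applying \eqref{majoproba} together with the Riemann-type bound $\sum_{K_1<\cdots<K_m,\,n_j>n^\theta}\prod_jn_j^{-3/4}=O(n^{m/4})$ and the offset factor $(n^{L\theta})^{M-m}$ gives a contribution of each $\pi$ of the form $n^{m/4-\eta'+o(1)}$ for some $\eta'>0$. A short case analysis, splitting on whether some $r_j\ge 3$ (forcing $m<M/2$) or $|\mathcal{J}|\ge 1$ (in which case the cancellation factor $\prod_{j\in\mathcal{J}'}n_j^{-1/2+\eta}$ from $\mathfrak{E}_{\boldsymbol k}$ is activated), shows that this exponent is strictly less than $M/8=(2N+1)/8$ once $\theta$ and $\eta$ are small enough, whence the $o(n^{(2N+1)/8})$ bound. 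The same estimate reused for $M=2N$ shows that non-pairing partitions contribute only $o(n^{N/4})$, so only pairings matter for the leading even-order asymptotic.

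For $M=2N$ pairings, counting gives $\tfrac{(2N)!}{N!\,2^N}$ unordered pairings times $N!$ bijections from the blocks to the ordered clusters $1,\ldots,N$, hence a total combinatorial factor $\tfrac{(2N)!}{2^N}$. The second part of Proposition~\ref{1:Point1} replaces each pair expectation by $d^NE_{\boldsymbol K}(2\pi\sigma_\xi^2)^{-N/2}\prod_j\mathcal{A}_{K_j,\ell_j}$. Splitting the $K_j$-sum by residues modulo $d$ and using the slow variation of $E_{\boldsymbol K}$ in $\boldsymbol K/n$, I replace $\sum_{\ell}2^{\#\{j:\ell_j>0\}}\prod_j\mathcal{A}_{K_j\bmod d,\ell_j}$ by its residue-average, which the third part of Proposition~\ref{1:Point1} identifies as $\sigma_f^{2N}/d^N+o(1)$. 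The Riemann sum
\[
\sum_{K_1<\cdots<K_N}E_{\boldsymbol K}\sim\frac{n^{N/4}}{N!}\int_{[0,1]^N}\mathbb{E}\bigl[\det\mathcal{D}_{t_1,\ldots,t_N}^{-1/2}\bigr]\,dt_1\cdots dt_N
\]
combines with these factors to produce exactly the formula stated in the corollary.

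The main obstacle I anticipate is the careful scale bookkeeping between $n^\theta$ and $n^{L\theta}$: verifying that every non-pairing partition, together with every configuration falling in the intermediate regime, contributes only $o(n^{M/8})$, with the constants $\theta$, $\eta$, $L$ chosen compatibly once and for all. Once this is under control the rest of the argument is just a matter of tracking the combinatorial factors and assembling the three parts of Proposition~\ref{1:Point1} in sequence.
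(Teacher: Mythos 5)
Your overall architecture is the same as the paper's: expand the $M$-th moment, cluster the indices at two scales $n^{L\theta}$ (within clusters) and $n^{\theta}$ (between clusters), bound non-pairing configurations by the first part of Proposition~\ref{1:Point1}, and extract the leading term for $M=2N$ from pairings via the second and third parts together with a Riemann-sum limit; your combinatorial factors ($\frac{(2N)!}{2^N}$ with the extra $2^{\#\{j:\ell_j>0\}}$, the residue splitting and the $d^N$ cancellation) are consistent with the paper. The genuine gap is your treatment of the intermediate regime. You propose to absorb configurations in which consecutive cluster bases are separated by a gap in $(n^{L\theta},n^{\theta}]$ by ``collapsing the offending clusters into a single coarser one and reapplying the first part of Proposition~\ref{1:Point1} with a smaller $m$''. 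This step fails as stated: Proposition~\ref{1:Point1} requires the within-cluster offsets $\ell_{j,s}$ to be at most $n^{L\theta}$ while the between-cluster gaps exceed $n^{\theta}$, and after the merge the internal offsets of the coarser cluster are of order $n^{\theta}\gg n^{L\theta}$, so the hypotheses are violated at the scale $\theta$; passing to a coarser scale $\theta/L$ restores the offset condition but destroys the separation condition, and this does not terminate after one step. The paper's missing device is a finite cascade of scales $\theta_p=L^{p}\theta_0$, $p=0,\dots,M$, with a pigeonhole argument: each tuple has at most $M$ gaps, so at least one band $(n^{L^{p+1}\theta_0},n^{L^{p}\theta_0}]$ contains none of them, and the clustering is performed at that scale; one must then also verify separately (as in the last display of the paper's proof) that pairing configurations which only cluster at the finer scales $p\ge 1$ contribute $o(n^{N/4})$, using that at least one separation is then $\le n^{\theta_0}$. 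Without this cascade, or a strengthened version of Proposition~\ref{1:Point1} allowing offsets up to $n^{\theta}$, your $o(n^{M/8})$ bound for the intermediate regime is not established, and this is precisely the bookkeeping you flag but do not carry out.

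A second, smaller omission concerns the case $d>1$ in the odd/non-pairing bound. The gain $\prod_{j\in\mathcal J'}n_j^{-\frac12+\eta}$ in $\mathfrak E_{\boldsymbol k}$ is available only for the quantity summed over the $d$ residues $k'_j\in\{0,\dots,d-1\}$ of the singleton cluster bases (this is how \eqref{majoproba} is stated); it does not hold pointwise in $\boldsymbol k$. Hence the sum over cluster bases must be organized into complete residue blocks, and the incomplete blocks at the boundary of the admissible region (the paper's cases (a)--(b), where the block crosses the constraints $k_j>k_{j-1}+n^{\theta}$ or $k_m+\ell\le n$) must be estimated separately. Your proposal invokes the ``cancellation factor'' for each configuration with $\#\mathcal J\ge 1$ directly, which Proposition~\ref{1:Point1} does not provide; incorporating the block decomposition fixes this, but it is a step that has to appear explicitly.
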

\begin{proof}
Since $f$ is bounded, it is enough to prove the result for $n=n'd$.
We start by writing
\begin{equation}\label{HHH1}
\mathbb E\left[\left(\sum_{k=1}^nf(Z_k)\right)^{M}\right]=\sum_{1\le m_1\le...\le m_{M}\le n} c_{\boldsymbol m}\mathbb  E\left[   \prod_{j=1}^{M}f(Z_{m_j})\right]
\, ,
\end{equation}
where $c_{\boldsymbol m}$ is the number of $(r_1,...,r_M)\in\{1,...,n\}^M$
such that $r_1,...,r_M$ and $m_1,...,m_M$ contain the same values  with same multiplicities. \\
Let $\theta_0\in\left(0,\frac 1{M+1}\right)$.
Given a sequence $1\le m_1\le ...\le m_M\le n$ with convention $m_0=0$, we consider $p\in\{0,...,M\}$ such that
no $m_j-m_{j-1}$ (for $j=1,...,M$) is in $(n^{L^{p+1}\theta_0},n^{L^p\theta_0}]$. Set $\theta=L^p\theta_0$. We write $k_1=m_1$ and, inductively, if $k_j=m_{u(j)}$,  we set $k_{j+1}=m_{u(j+1)}$ for the smallest integer $m_r $ such that $m_r>k_j+n^{\theta}$,
$s_j=u(j+1)-u(j)-1$ and then $\ell_{j,s}=m_{u(j)+s}$.\\
Thus
each $\boldsymbol m=(m_1,...,m_M)$ with $1\le m_1\le...\le m_M\le n$
can be represented by at least one 
\begin{equation}
(\boldsymbol k,\boldsymbol\ell)\in \bigcup_{p=0}^M\bigcup_{m=1}^M\bigcup_{s_j\ge 0\, :\, M=\sum_{j=1}^m(1+s_j)}F_{n,L^{p}\theta_0,m,s_1,...,s_m}\, ,
\end{equation}
with $F_{n,\theta,m,s_1,...,s_m}$ the set of $M$-uple $(\boldsymbol k,\boldsymbol\ell)$  of nonnegative integers with $\boldsymbol k=(k_j)_{j=1,...,m}$, $\boldsymbol \ell=(\ell_{j,s})_{j=1,...,m;s=1,...,s_j}$ such that, for all $j=1,...,m$, $k_{j}\ge k_{j-1}+n^\theta$ (with convention $k_0=0$) and, for all $j=1,...,m$ and all $s=1,...,s_j$, $0\le\ell_{j,s}\le n^{L\theta}$ and, with this representation,
\begin{equation}
\mathbb  E\left[   \prod_{j=1}^{M}f(Z_{m_j})\right]=\mathbb  E\left[   \prod_{j=1}^m \left(f(Z_{k_j})\prod_{s=1}^{s_j}f(Z_{k_j+\ell_{j,s}})\right)\right]\, .
\end{equation}
We first study separately the following sums
\[
\sum_{(m,\boldsymbol s)\in G_M}
\sum_{(\boldsymbol k,\boldsymbol \ell)\in F_{n,\theta,m,s_1,...,s_m}}c_{(\boldsymbol k,\boldsymbol \ell)}
\mathbb E\left[  \prod_{j=1}^m \left(f(Z_{k_j})\prod_{s=1}^{s_j}f(Z_{k_j+\ell_{j,s}})\right)\right]\, ,
\]
with $G_M$ the set of $(m,\boldsymbol s)$ with $m\in\{1,...,M\}$ and $\boldsymbol s=(s_1,...,s_m)$ with $s_j\ge 0$ for all $j=1,...,m$ and such that $M=\sum_{j=1}^m(s_j+1)$.

Let us fix for the moment $(m,\boldsymbol s)\in G_M$.
With the notation \eqref{CCCC1}, we wish to study
\begin{equation}\label{quantiteF}
\sum_{(\boldsymbol k,\boldsymbol \ell)\in F_{n,\theta,m,s_1,...,s_m}}
\mathbb E\left[  \prod_{j=1}^m \left(f(Z_{k_j})\prod_{s=1}^{s_j}f(Z_{k_j+\ell_{j,s}})\right)\right]=\sum_{(\boldsymbol k,\boldsymbol \ell)\in F_{n,\theta,m,s_1,...,s_m}}B_{\boldsymbol k,\boldsymbol \ell}\, .
\end{equation}
We say that $(\boldsymbol k,\boldsymbol\ell)$ and $(\boldsymbol k',\boldsymbol\ell')$ belong to a same block if 
\[
\forall r\not\in \mathcal J,\ k_r=k'_r,\quad \forall j\in\mathcal J,\quad \lfloor k_j/d\rfloor=\lfloor k'_j/d\rfloor,\quad \boldsymbol\ell=\boldsymbol \ell'\, .
\]
A block is an equivalence class for this equivalence relation.
We write $F'_{n,\theta,m,s_1,...,s_m}$ for the set of $(\boldsymbol k,\boldsymbol\ell)$ such that their block is contained in $F_{n,\theta,m,s_1,...,s_m}$.
We will see that the contribution of the sum over $F_{n,\theta,m,s_1,...,s_m}\setminus F'_{n,\theta,m,s_1,...,s_m}$ is neglectable in \eqref{quantiteF}.
Indeed, observe that if $(\boldsymbol k,\boldsymbol \ell)\in F_{n,\theta,m,s_1,...,s_m}\setminus F'_{n,\theta,m,s_1,...,s_m}$, then at least one of the following condition holds true
\begin{itemize}
\item[(a)] $\lfloor k_j/d\rfloor d-k_{j-1}<n^\theta\le (\lfloor k_j/d\rfloor+1)d-1-k_{j-1}$ if $j-1\not\in\mathcal J$ (or $\lfloor k_j/d\rfloor d-(\lfloor k_{j-1}/d\rfloor+1)d-d<n^\theta\le (\lfloor k_j/d\rfloor+1)d-1-\lfloor k_{j-1}/d\rfloor  d$
if $j-1\in\mathcal J$)
\item[(b)] 
$m\in\mathcal J$ and $d\lfloor k_m/d\rfloor+\max_s\ell_{m,s}<n\le 
d(\lfloor k_j/d\rfloor+1)+\max_s\ell_{m,s}
$
\end{itemize}
Let us fix $\mathcal J''\subset \mathcal J$.
Due to the first point of Lemma \ref{LEM5}, the contribution to \eqref{quantiteF} of blocks having a type (a) or (b) problem at indices $\mathcal J''$ is in
\begin{align*}
&\sum_{(k_j)_{j\not\in\mathcal J''},\boldsymbol \ell}\mathcal O\left(\left(\prod_{j=1}^mn_j^{-\frac 34}\right)\sum_{\mathcal J'\in\{1,...,m\}\, :\, \#\mathcal J'\ge\#(\mathcal J\setminus\mathcal J'')/2}\prod_{j\in\mathcal J'} n_j^{-\frac 12+\eta}
\right)
\\
&=\mathcal O\left(n^{LM\theta}\sum_{(n_j)_{j\not\in\mathcal J''}=n^\theta}^n\left(\prod_{j=1}^mn_j^{-\frac 34}\right)
\sum_{\mathcal J'\in\{1,...,m\}\, :\, \#\mathcal J'\ge\#(\mathcal J\setminus\mathcal J'')/2}\prod_{j\in\mathcal J'} n_j^{-\frac 12+\eta}\right)\, .
\end{align*}
The study of this quantity corresponds to \eqref{TOTO} up to replace $m$ par $m-\#\mathcal J''$ and to delete indices $\mathcal J''$, which thus will be in $o(n^{-\frac M8})$, as proved below.

Now, using the $d$-block structure of $F'_{n,\theta,m,s_1,...,s_m}$, It follows from \eqref{majoproba} that
\begin{align}
&
\sum_{(\boldsymbol k,\boldsymbol \ell)\in F'_{n,\theta,m,s_1,...,s_m}}
B_{(\boldsymbol k,\boldsymbol \ell)}
=\mathcal O\left( n^{LM\theta} \sum_{n_1,...,n_m=n^\theta}^n \left(\prod_{i=1}^mn_i^{-\frac 34}\right)\left(
\sum_{\mathcal J'\in\{1,...,m\}\, :\, \#\mathcal J'\ge(\#\mathcal J)/2}\prod_{j\in\mathcal J'} n_j^{-\frac 12+\eta}
\right)\right)\, .\label{TOTO}
\end{align}
The above quantity is in
\begin{align*}
&\mathcal O\left( n^{LM\theta} \sum_{\mathcal J'\, :\, \#\mathcal J'\ge\#(\mathcal J)/2}
\sum_{n_1,...,n_m= n^\theta}^n \left(\prod_{i=1}^mn_i^{-\frac 34}\right)\prod_{r\in\mathcal J'} n_{r}^{-\frac 12-\eta
}\right)\\
&=\mathcal O\left(\sum_{\mathcal J'\, :\, \#\mathcal J'\ge\#(\mathcal J)/2} n^{LM\theta+\frac 14\left(m-\lceil\#(\mathcal J)/2\rceil\right)-\left(\frac 1 4-\eta \right)\theta\lceil\#(\mathcal J)/2\rceil}\right)\\
&=\mathcal O\left( n^{LM\theta+\frac 14\left(m-\lceil\#\mathcal J/2\rceil\right)-\frac \theta 4\lceil\#\mathcal J/2\rceil+\theta J\gamma}\right)\, ,
\end{align*}
where we used the fact that $\sum_{r=1}^nr^{-\frac 34}=\mathcal O\left(n^{\frac 14}\right)$ and that  $\sum_{r\ge n^\theta}r^{-\frac 54
}=\mathcal O\left(n^{\frac\theta 4
}\right)$.
Observe moreover that $M=\sum_{j=1}^m(s_j+1)\ge 2(m-\#\mathcal J)+\#\mathcal J=2m- \#\mathcal J$, 
with equality if and only if $s_j\in\{0,1\}$ for all $j=1,...,m$.
It follows that
\begin{align*}
\sum_{(\boldsymbol k,\boldsymbol \ell)\in F_{n,\theta,m,s_1,...,s_m}}
\left|\mathbb E\left[  \prod_{j=1}^m \left(f(Z_{k_j})\prod_{s=1}^{s_j}f(Z_{k_j+\ell_{j,s}})\right)\right]\right|=\mathcal O\left( n^{LM\theta+\frac {M}8-\left[\frac{M-(2m-\#\mathcal J)}8
+\theta \left(\frac {\lceil \#\mathcal J/2\rceil}4- \#\mathcal J\eta\right)\right]}\right)
\end{align*}
In particular this is in $o(n^{\frac M8})$ as soon as 
$M>2m-\#\mathcal J$ or $\mathcal J\ne\emptyset$.

This ends the proof of the first point of Corollary~\ref{cvgcemoments} (since, when $M$ is odd, we cannot have $M=2m-\#\mathcal J$ and $\mathcal J=\emptyset $) and ensures that, for $M$ even,
\[
n^{-\frac M8}\mathbb E\left[\left(\sum_{k=1}^nf(Z_k)\right)^{M}\right]=n^{-\frac M8}\sum_{(\boldsymbol k,\boldsymbol \ell)\in\bigcup_{p=0}^M F_{n,L^p\theta_0,M/2,1,...,1}}c_{(\boldsymbol k,\boldsymbol \ell)}
\mathbb E\left[  \prod_{j=1}^m \left(f(Z_{k_j})f(Z_{k_j+\ell_{j,1}})\right)\right]
\, .
\]
Assume from now on that $\theta=\theta_0$ and that $M$ is even, $\mathcal J=\emptyset$ and $M=2m$, which means that $s_j=1$ for every $j=1,...,m$ and let us estimate the following quantity
\[
\mathcal E_{n,M,\theta}=\sum_{(\boldsymbol k,\boldsymbol \ell)\in F_{n,\theta,M/2,1,...,1}}c_{(\boldsymbol k,\boldsymbol \ell)}
\mathbb E\left[  \prod_{j=1}^m \left(f(Z_{k_j})f(Z_{k_j+\ell_{j,1}})\right)\right]\, .
\]
Note that, when $(\boldsymbol k,\boldsymbol \ell)\in F_{n,\theta,M/2,1,...,1}$, then $c_ {(\boldsymbol k,\boldsymbol \ell)}=\frac{(2m)!}{2^{\#\{ j:\ell_j=0\}}}$.
Using this and applying 
Proposition~\ref{1:Point1} combined with the dominated convergence theorem, we obtain that
\begin{align*}
&n^{-\frac m4}\mathcal E_{n,M,\theta}
=\frac{(2m)!}{2^m}n^{-\frac m4} \sum_{0\le k_1<...<k_m\le n:k_{i+1}-k_i>n^{\theta}}\sum_{\ell_1,...,\ell_m=0}^{n^{L\theta}}2^{\#\{ j:\ell_j>0\}} \mathbb E\left[   \prod_{j=1}^mf(Z_{k_j})f(Z_{k_j+\ell_{j}})\right]\\
&=\frac{(2m)!}{2^m}n^{-m} \sum_{0\le k_1<...<k_m\le n/d
:k_{i+1}-k_i>n^{\theta}}n^{\frac {3m}4}\sum_{k'_1,...,k'_m=0}^{d
-1}\sum_{\ell_1,...,\ell_m=0}^{n^{L\theta}}2^{\#\{ j:\ell_j>0\}} \mathbb E\left[   \prod_{j=1}^mf(Z_{d
k_j+k'_j})f(Z_{d
k_j+k'_j+\ell_{j}})\right]+o(1)\\
&=\frac{(2m)!}{2^m}
\int_{0\le t_1<...<t_m\le 1/d}
\frac{ d^m \sigma_f^{2m}\mathbb E\left[\det \mathcal D_{d_0t_1,...,d_0t_m}^{-\frac 12}\right]}{
(2\pi\sigma_\xi^2)^{\frac m2}}\, dt_1...dt_m+ o(1)\, .
\end{align*}
Therefore
\begin{align*}
\lim_{n\rightarrow +\infty}n^{-\frac m4}\mathcal E_{n,M,\theta}
&=\frac{(2m)!
}{2^m
}
\int_{0\le s_1<...<s_m\le 1}
\frac{
\sigma_f^{2m}\mathbb E\left[\det \mathcal D_{s_1,...,s_m}^{-\frac 12}\right]}{
(2\pi\sigma_\xi^2)^{\frac m2}}\, ds_1...ds_m\\
&=\frac{(2m)!
 \sigma_f^{2m}}{m!2^m
(2\pi\sigma_\xi^2)^{\frac m2}}
\int_{[0,1]^m}
\mathbb E\left[\det \mathcal D_{s_1,...,s_m}^{-\frac 12}\right]\, ds_1...ds_m\, .
\end{align*}
It remains now to prove that we can neglect the contribution of the
$(\boldsymbol k,\boldsymbol \ell)\in \bigcup_{p=1}^MF_{n,L^{p}\theta_0,M/2,1,...,1}\setminus F_{n,\theta_0,M/2,1,...,1}$. Fix some $p=1,...,M$. It follows from \eqref{majoproba} that
\begin{align*}
&n^{-\frac m4}\sum_{(\boldsymbol k,\boldsymbol \ell)\in F_{n,L^{p}\theta_0,M/2,1,...,1}\setminus F_{n,\theta_0,M/2,1,...,1}}c_{(\boldsymbol k,\boldsymbol \ell)}
\mathbb E\left[  \prod_{j=1}^m \left(f(Z_{k_j})f(Z_{k_j+\ell_{j,1}})\right)\right]\\
&=\mathcal O\left(   n^{-\frac m4} \sum_{n_1,...,n_{m-1}=n^{L^p\theta_0}}^n \left(\prod_{i=1}^{m-1}n_i^{-\frac 34}\right)
\sum_{n_m=1}^{n^{\theta_0}}n_m^{-\frac 34} n^{mL^{p+1}\theta_0}\right)
=\mathcal O\left(n^{-\frac 14+\frac{\theta_0} 4+mL^{p+1}\theta_0} 
\right)=o(1) \, .
\end{align*}
\end{proof}
The last part of Theorem \ref{PAPATCL} corresponds to the particular case $f=\delta_0-\delta_a$. In this case
\[
\sigma^2_f=\sigma^2_{0,a}=\sum_{k\in\mathbb Z}\left[2\mathbb P(Z_{|k|}=0)-\mathbb P(Z_{|k|}=a)-\mathbb P(Z_{|k|}=-a)\right]\, .
\]

\begin{appendix}
\section{Proofs of technical lemmas for Theorem~\ref{PAPATCL}}\label{AppendA}
Recall the context.
Let $M\ge 1$, $\theta\in(0,1)$, $\eta \in \left(0,\frac 1{100}\right)$, $L=\frac{\kappa\eta}{10M}$.
Recall that $n_j=k_j-k_{j-1}$ (with convention $k_0=0$).
Assume $n^\theta<n_j<n$
and let $\ell_{j,1},...,\ell_{j,s_j}=0,...,\lfloor n^{
L\theta}\rfloor$ 
with $\sum_{j=1}^m(1+s_j)=M$.

\begin{proof}[Proof of Lemma \ref{LEM1}]
We start by writing
\begin{align}
p_{\mathbf{k},\boldsymbol{\ell}}(\boldsymbol{a},\boldsymbol{b})
&\ \ \ =\frac {1}{(2\pi)^M} 
  \int_{[-\pi,\pi]^{M}} e^{-i\sum_{j=1}^m[(a_j-a_{j-1})\theta_j+\sum_{s=1}^{s_j}(b_{j,s}-a_j)\theta'_{j,s}]}\varphi_{\boldsymbol{k},\boldsymbol{\ell}}(\boldsymbol\theta,\boldsymbol\theta')
   \, d(\boldsymbol\theta,\boldsymbol \theta')\, .\label{1:EqFourier0}
\end{align}
But, due to the definition of $d$, for any $u,v\in\mathbb Z$, $\varphi_\xi(u+\frac{2\pi v}d)=(\varphi_\xi(\frac{2\pi}d))^v\varphi_\xi(u)$ and so, for any $\boldsymbol u\in\mathbb R^M$ and $\boldsymbol v\in\mathbb Z^M$,
\begin{align*}
&\varphi_{\boldsymbol{k},\boldsymbol{\ell}}(\boldsymbol u+\frac{2\pi}d\boldsymbol v)= \EE \left[\prod_{y\in \ZZ}  \varphi_{\xi}\left( \sum_{j=1}^m\left[\left(u_j+\frac {2\pi v_j}d\right)N'_{j}(y)+\sum_{s=1}^{s_j}\left(u_{j,s}+\frac {2\pi v_{j,s}}d\right)N'_{j,s}(y)\right]\right)\right]\\
&= \EE \left[\prod_{y\in \ZZ} \left(\varphi_\xi\left(\frac{2\pi}d\right)\right)^{\sum_{j=1}^m [v_jN'_j(y)+\sum_{s=1}^{s_j}v_{j,s}N'_{j,s}(y)]} \varphi_{\xi}\left( \sum_{j=1}^m\left[u_jN'_{j}(y)+\sum_{s=1}^{s_j}u_{j,s}N'_{j,s}(y)\right]\right)\right]\\
&=\left(\varphi_\xi\left(\frac{2\pi }d\right)\right)^{\sum_{j=1}^m\left[v_jn_j+\sum_{s=1}^{s_j}\ell_{j,s}v_{j,s}\right]}\varphi_{\boldsymbol{k},\boldsymbol{\ell}}(\boldsymbol u)\, .
\end{align*}
 and so
\begin{align*}
\nonumber&p_{\mathbf{k},\boldsymbol{\ell}}(\boldsymbol{a},\boldsymbol{b})
=\frac {1}{(2\pi)^M} 
  \int_{[-\frac \pi d,\frac \pi d]^{m}\times[-\pi,\pi]^{M-m}} \sum_{r_j
=0}^{d-1}e^{-i\sum_{j=1}^m[(a_j-a_{j-1})(\theta_j+\frac{2\pi r_j}{d})+\sum_{s=1}^{s_j}(b_{j,s}-a_j)\theta'_{j,s}
}\\
&\quad\quad\quad\quad\quad\quad\quad\quad\quad\quad\quad\quad \left(\varphi_\xi\left(\frac{2\pi }d\right)\right)^{\sum_{j=1}^m
r_jn_j
}\varphi_{\boldsymbol{k},\boldsymbol{\ell}}(\boldsymbol\theta,\boldsymbol\theta')
   \, d(\boldsymbol\theta,\boldsymbol \theta')\, .
\end{align*}
Moreover, for any $a\in\mathbb Z$, then
$\sum_{r=0}^{d-1} e^{-\frac{2ia\pi r}d}\left(\varphi_\xi\left(\frac{2\pi }d\right)\right)^{vr}=0$ except if $e^{-\frac{2ia\pi}d}\left(\varphi_\xi\left(\frac{2\pi }d\right)\right)^{v}=1$ (i.e. if $v\alpha-a\in d\mathbb Z$) and then this sum is equal to $d$.
This ends the proof of Lemma~\ref{LEM1}.
\end{proof}

\begin{proof}[Proof of Lemma~\ref{LEM2}]
Due to \cite[Lemma 16]{BFFN2}, for any $\gamma>0$, 
satisfies $\PP(\Omega^{(j)}_{\boldsymbol k}) =1 - o(n_j^{-p})$ for any $p>1$ and so, since $n_j>n^\theta$, it follows that
for all $p>1$, $\PP(\Omega^{(j)}_{\boldsymbol k})=1-o(n^{-p})$.
Moreover, since $\theta'\in(0,\frac\theta 4)$, due to \cite[Lemma 21]{BFFN2},
\[
\forall p>1,\quad \mathbb P\left(\det D_{n_1,...,n_m}< n^{-\theta'}\prod_{i=1}^mn_i^{\frac 32}\right)=o(n^{-p})\, ,
\]
uniformly on $\boldsymbol k$ as above.
\end{proof}
\begin{proof}[Proof of Lemma~\ref{LEM3}]
Recall that $\mathcal F=\left\{y\in\mathbb Z\, :\, \forall (j,s),\ N'_{j,s}(y)=0\right\}$.
Due to \eqref{majovarphi}, 
Lemma~\ref{LEM3} follows from the following estimate
\begin{equation}
\exists c>0,\quad
 \int_{\{ \exists j,n_j^{-\frac 12+{\eta}}<|\theta_j|\}}
 \EE \cro{\prod_{y\in\mathcal F} \left|\varphi_{\xi}\left(\sum_{j=1}^m\theta_jN'_{j}(y)\right)\right|
   {\bf 1}_{\Omega_{\boldsymbol k}} } \, d\boldsymbol\theta
= o\left(e^{- n^c}\right) \, ,
\end{equation}
uniformly on $\boldsymbol k,\boldsymbol\ell$ as in Proposition~\ref{1:Point1}.
To this end, we follow and slightly adapt the proof of \cite[Proposition 10]{BFFN1} as explained below.
Observe that, up to conditioning with respect to $(S_{k+1}-S_{k})_{k\not\in\{k_{j-1},...,k_j-1\}}$, this will be a consequence of
\begin{equation}\label{1:Neglecb}
\forall j=1,...,m,\quad \forall u\in\mathbb R,\quad \int_{n_j^{-\frac 12+{\eta}}<|\theta|<\frac\pi d}
 \EE \cro{\prod_{y\in \mathcal F} \left|\varphi_{\xi}\left(u+\theta N'_{j}(y)\right)\right|
   {\bf 1}_{\Omega_{\boldsymbol k}} } \, d\theta
= o\left(e^{- n^c}\right) \, ,
\end{equation}
uniformly on $k_j,\ell_{j,s}$ as above.
Recall that $\#(\mathbb Z\setminus \mathcal F)\le\sum_{j=1}^m\sum_{s=1}^{s_j}\ell_{j,s}\le M n^{L\theta}$.
As in \cite[after Lemma 16]{BFFN1}, we observe that, for $n$ large enough,
\begin{eqnarray}
\label{majodist}
\prod_{y\in\mathcal F} \va{\varphi_{\xi}(u+\theta  N'_j(y)) }
\leq \exp \pare{- \frac{\sigma^2_\xi}4 n^{-\frac 1 2 + 4\gamma}\# \acc{y\ :\
 d\left(u+\theta N'_j(y),\frac{2\pi}d{\mathbb Z}\right) \geq n^{-\frac{1}{4}+2\gamma}
}} \, ,
\end{eqnarray}
and that
\begin{eqnarray}
\label{Gt}
d\left(u+\theta  N'_j(y),\frac{2\pi{\mathbb Z}}d\right)\ge
n^{-\frac{1}{4}+2\gamma}\ \Longleftrightarrow\ \frac u\theta +N'_j(y)\in  \I:=\bigcup_{k\in{\mathbb Z}}I_{k}\, ,
\end{eqnarray}
where, for all $k\in \ZZ$,
$$I_{k}:=\left[
\frac{2k\pi}{d\theta}+\frac{n^{-\frac{1}{4}+2\gamma}}
{\theta},
\frac{2(k+1)\pi}{d\theta}-\frac{n^{-\frac{1}{4}+2\gamma}}
{\theta}\right]. $$
In particular ${\mathbb R}\setminus\I=\bigcup_{k\in{\mathbb Z}}J_{k}$, where for all $k\in \ZZ$,
$$J_{k}:=\left(
\frac{2k\pi}{d\theta}-\frac{n^{-\frac{1}{4}+2\gamma}}
{\theta},
\frac{2 k\pi}{d \theta}+\frac{n^{-\frac{1}{4}+2\gamma}}{\theta}\right). $$
Let $N_\pm$ be two positive integers such that
${\mathbb P}(X_1=N_+){\mathbb P}(X_1=-N_-)>0$.
Let ${\mathcal C}^\pm=(\mathcal C^\pm_k)_{k=1,...,T}\in\mathbb Z^{T}$ 
with $T=N_++N_-$ and $\mathcal C^+_k= N_+$ for $k\le N_-$ and $\mathcal C^+_k=- N_-$ otherwise, and symetrically
and $\mathcal C^-_k=- N_-$ for $k\le N_+$ and $\mathcal C^-_k= N_+$
otherwise.
It has been proved in \cite{BFFN1} (see Lemma 15 therein combined with the estimate ${\mathbb P}(\D_n) =1-o(e^{-cn})$ in Section 2.8 therein) 
that, for $n$ large enough,
\begin{equation}\label{controlEj}
\mathbb P(\Omega_{\boldsymbol k}\setminus\mathcal E_{j})=o(e^{-cn_j})\, ,
\end{equation}
with
\[
\mathcal E_{j}=\left\{ \#\{y\in {\mathbb Z}\ :\
C_j(y)
\ge  n_j^{\frac{1}{2}-2\gamma}\}  \ge 3N_+N_- n_j^{\frac 1 2- 2\gamma}\right\}\, ,
\]
and
where, for any $y\in {\mathbb Z}$,
$$
C_{j}(y):=\#\left\{k=0,\dots,\left\lfloor \frac {n_j} T\right\rfloor-1 \ :\ 
S_{k_{j-1}+kT}-S_{k_{j-1}}=y \textrm{ and }(X_{k_{j-1}+kT},\dots,X_{k_{j-1}+(k+1)T-1}) = \C^\pm\right\}\, .
$$
Now, on  $\E_j$, we define
$Y_i$  for $i=1,\dots,\left\lfloor{n_j^{\frac 1 2-2\gamma}}
   \right\rfloor$, by 
$$Y_1:=\min\left\{y\in \mathbb Z
\ :\
C_j(y)\ge n_j^{\frac 1 2 -2 \gamma} \right\},$$
and
$$Y_{i+1}:= \min\left\{y\ge Y_i+3N_-N_+\ :\ C_j(y)\ge n_j^{\frac 1 2 -2 \gamma} \right\}\quad \textrm{for }i\ge 1 .$$
For every $i=1,\dots,\left\lfloor{n_j^{\frac 1 2-2\gamma}}\right\rfloor$, 
let
$t_i^{1},\dots,t_i^{\left\lfloor n_j^{\frac 1 2 -2 \gamma}\right\rfloor}$ be the 
$\left\lfloor n_j^{\frac 1 2 -2 \gamma}\right\rfloor$
first times (which are multiples of $T$) when a peak of the form $\mathcal C^{\pm}$ is based on the site $Y_i$. 
We also define $N_j^{0}(Y_i+N_+N_-)$ as the
 number of visits of $(S_{k_{j-1}+k}-S_{k_{j-1}})_{k\ge 0}$ before time $n_j$ to $Y_i+N_+N_-$, which do not occur during the time intervals $[t_i^u,t_i^u+T]$, 
for $u\le \left\lfloor n_j^{\frac 1 2 -2 \gamma}\right\rfloor$.   
We proved in \cite[Lemma 16]{BFFN1} that, for any $H\ge 0$,
\begin{eqnarray}
\label{H}
{\mathbb P}\left(\frac u\theta+N'_j(Y_i+N_+N_-)\in\I \left| \E_n,\ N_j^0(Y_i+N_+N_-)=H\right.\right)={\mathbb P}\left(H+\frac u\theta+b_j\in\I\right),
\end{eqnarray}
where $b_j$ is a random variable with
binomial distribution
${\mathcal B}\left(\left\lfloor {n_j^{\frac 12 -2\gamma}}\right\rfloor; \frac 12\right)$
and finally we proved in \cite[Lemmas 17 and 18]{BFFN1} (see in particular the last formula in the proof of Lemma 17) that
\[\forall H'\in\mathbb R,\quad \PP \pare{H'+b_n \in \I} 
\geq \frac 1 3 \, .
\]
Thus, conditionally to  $(S_{k+1}-S_{k})_{k\not\in\{k_{j-1},...,k_j-1\}}$,
$\E_j$ and $((N_j^0(Y_i+N_+N_-),i\ge 1)$, the events 
$\{\frac u\theta+N_{j}(Y_i+N_+N_-)\in\I\}$, $i\ge 1$, are independent of each other, and all happen
with probability at least $1/3$. We conclude that
\begin{equation}\label{controlBinom}
{\mathbb P}\left(\E_j\cap\left\{
\ \#\left\{i\ :\ \frac u\theta+N'_{j}(Y_i+N_+N_-)\in{\mathcal I}\right\}\le
\frac{n_j^{\frac 1 2-2\gamma}} 4\right\}\right)
\le {\mathbb P}\left(B_j\le \frac{n_j^{\frac 1 2-2\gamma}}4\right) 
=o(e^{-c'n_j})\, ,
\end{equation}
where $B_j$ has binomial distribution
${\mathcal B}\left(\left\lfloor {n_j^{\frac 12 -2\gamma}}\right\rfloor;
\frac 13\right)$.

\noindent But if $\#\{y\in\mathbb Z\ :\ N'_j(z)\in\I\}\ge
n_j^{\frac 1 2-2\gamma}/4 
$, then, by \eqref{majodist} and \eqref{Gt} there exists a constant $c''>0$, such that,
for any $n$ large enough,
$$\prod_{y\in\mathcal F}\vert\varphi_\xi(u+\theta N'_j(y))\vert
\le \exp\left(-c'' n_j^{\frac 1 2-2\gamma}n_j^{-\frac 1 2+4\gamma}\right),
$$
since $\#(\mathbb Z\setminus\mathcal F)\ll n_j^{\frac 1 2-2\gamma}/4 $.
This, combined with \eqref{controlEj} and \eqref{controlBinom}, ends the  proof of \eqref{1:Neglecb} and so of Lemma~\ref{LEM3}.
\end{proof}

\begin{proof}[Proof of Lemma~\ref{LEM4}]
We have to estimate
$B_{\boldsymbol k,\boldsymbol\ell,I^{(2)}_{\boldsymbol k},\Omega_{\boldsymbol k}}$
uniformly on $\boldsymbol k,\boldsymbol\ell$ as in Proposition~\ref{1:Point1},
where $I^{(2)}_{\boldsymbol k}=V_{\boldsymbol k}\times[-\pi,\pi]^{M-m}$
and where $V_{\boldsymbol k}$ is the set of $\boldsymbol\theta\in \mathbb R^m$ such that for all
$j=1,...,m$, $|\theta_j|<n_j^{-\frac 12+{\eta}}$ and such that there exists
some $j_0=1,...,m$ satisfying $n_{j_0}^{-\frac 12-{\eta}}<|\theta_{j_0}|$.
Let $\varepsilon_0>0$ be such that
\begin{equation}\label{phixi}
\forall u\in[-\varepsilon_0,\varepsilon_0],\quad
|\varphi_\xi(u)|\le e^{-\frac{\sigma_\xi^2u^2}4}\, .
\end{equation}
We define the events $H_{\boldsymbol k}=\Omega_{\boldsymbol k}\cap \{\forall y\in\mathbb Z,\, |\sum_{j=1}^m\theta_jN'_{j}(y)|\le\varepsilon_0/2\}$ and
\[
H'_{\boldsymbol k}:=\left\{ 
\#\left\{y\in\mathbb Z\, :\, \left|\sum_{j=1}^m\theta_jN'_{j}(y)\right|\in\left[\frac{\varepsilon_0}4,\frac{\varepsilon_0}2\right]\right\}> n^{\frac 14}\right\}\, .
\]
Due to \cite[Lemma 21 and last formula of p. 2446]{BFFN2},
\begin{equation*}
\exists c'>0,\quad 
\mathbb P\left(\Omega_{\boldsymbol k}\setminus (H_{\boldsymbol k}\cup H'_{\boldsymbol k})\right)
=\mathcal O\left(\prod_{j=1}^mn_j^{-\frac 34}\right)\, ,
\end{equation*}
uniformly on $\boldsymbol k$ as above and 
uniformly on $\boldsymbol\theta\in V_{\boldsymbol k}$.
Thus,
\begin{equation}\label{1:Neglec00bb}
B_{\boldsymbol k,\boldsymbol\ell,I^{(2)}_{\boldsymbol k},\Omega_{\boldsymbol k}\setminus (H_{\boldsymbol k}\cup H'_{\boldsymbol k})}=\mathcal O\left(\prod_{j=1}^mn_j^{-\frac 54+\eta}
\right)\, ,
\end{equation}
where we used the fact that $\int_{V_{\boldsymbol k}}\, d\boldsymbol\theta\le \prod_{j=1}^mn_j^{-\frac 12+\eta}$.
Moreover, for $n$ large enough, it follows from the definition of $H'_k$, from
\eqref{Z-F} and \eqref{phixi} that
\begin{align}\label{1:Neglec00cc}
B_{\boldsymbol k,\boldsymbol\ell,I^{(2)}_{\boldsymbol k},\Omega_{\boldsymbol k}\cap H'_{\boldsymbol k})}=\mathcal O\left(
\int_{V_{\boldsymbol k}}\mathbb E\left[ \prod_{y\in\mathcal F}\left|\varphi_{\xi}\left(\sum_{j=1}^m\theta_jN'_{j}(y)\right)\right|\mathbf 1_{\Omega_{\boldsymbol k}\cap H'_{\boldsymbol k}}\right]\, d\boldsymbol\theta\right)\le e^{-\frac{\sigma_\xi^2\varepsilon_0^2n^{\frac 14}}{64}}\, .
\end{align}
Finally, it remains to estimate $B_{\boldsymbol k,\boldsymbol\ell,I^{(2)}_{\boldsymbol k},\Omega_{\boldsymbol k}\cap H_{\boldsymbol k}}$. To this end we write
\begin{align}
\nonumber\int_{V_{\boldsymbol k}}\mathbb E&
\left[
\prod_{y\in\mathcal F} \left|\varphi_{\xi}\left(\sum_{j=1}^m\theta_jN'_{j}(y)\right)\right|\mathbf 1_{\Omega_{\boldsymbol k}\cap H_{\boldsymbol k}}\right]\, d\boldsymbol\theta\\
\nonumber&\le \int_{V_{\boldsymbol k}} 
\mathbb E\left[e^{-\frac{\sigma^2_\xi}4\sum_{y\in\mathcal F}\left(\sum_{j=1}^m\theta_jN'_{j}(y)\right)^2}\mathbf 1_{\Omega_{\boldsymbol k}
}\right]\, d\boldsymbol\theta\\
\nonumber&\le \int_{V''_{\boldsymbol k}} 
\left(\prod_{j=1}^mn_j^{-\frac 34}\right)
\mathbb E\left[e^{-\frac{\sigma^2_\xi}4\sum_{y\in\mathcal F}\left(\sum_{j=1}^m\theta''_jn_j^{-\frac 34}N'_{j}(y)\right)^2}\mathbf 1_{\Omega_{\boldsymbol k}
}\right]\, d\boldsymbol\theta''\\
&\le
\left(\prod_{j=1}^mn_j^{-\frac 34}\right)
\mathbb E\left[\int_{(\widetilde D'_{\boldsymbol k})^{\frac 12}V''_{\boldsymbol k}}(\det\widetilde D'_{\boldsymbol k})^{-\frac 12}e^{-\frac{\sigma^2_\xi|\boldsymbol v|^2}4}
\mathbf 1_{\Omega_{\boldsymbol k}
}\, d\boldsymbol v\right]\, ,\label{Etape31}
\end{align}
with the successive changes of variable $\theta''_j=n_j^{\frac 34}\theta_j$ and 
$\boldsymbol v=(\widetilde D'_{\boldsymbol k})^{\frac 12}\boldsymbol\theta''$, with
\[
\widetilde D'_{\boldsymbol k}=\left((n_in_j)^{-\frac 34}\sum_{y\in\mathcal F}N'_{i}(y)N'_{j}(y)\right)_{i,j}\quad\mbox{and}\quad V''_{\boldsymbol k}=Diag(n_i^{\frac 34})V_{\boldsymbol k}\, .
\]
Note that $V''_{\boldsymbol k}$ is the set of $(\theta''_1,...,\theta''_m)$ such that $|\theta''_j|\le n_j^{\frac 14+\eta}$ and such that there exists $j_0=1,...,m$ such that $|\theta''_{j_0}|\ge n_{j_0}^{\frac 14-\eta}$.

Let us prove that, in the above formula, we can approximate the determinant of $\widetilde D'_{\boldsymbol k}$ by the one of $\widetilde D_{\boldsymbol k}:=\left((n_in_j)^{-\frac 34}\sum_{y\in\mathbb Z}N'_{i}(y)N'_{j}(y)\right)_{i,j}$.
To this end, writing $\Sigma_m$ for the set of permutations of the set $\{1,...,m\}$
and $\varkappa(\sigma)$ for the signature of $\sigma\in \Sigma_m$, we observe that, on $\Omega_{\boldsymbol k}$,
\begin{align*}
&\left|\det\widetilde D'_{\boldsymbol k}-\det \widetilde D_{\boldsymbol k}\right|\\
&=\left(\prod_{j=1}^mn_j^{-\frac 32}\right)\left|\sum_{\sigma\in\Sigma_m}(-1)^{\varkappa(\sigma)}\prod_{j=1}^m
\left(\sum_{y\in\mathcal F}N'_{j}(y)N'_{\sigma(j)}(y)\right)
-\sum_{\sigma\in\Sigma_m}(-1)^{\varkappa(\sigma)}\prod_{j=1}^m
\left(\sum_{y\in\mathbb Z}N'_{j}(y)N'_{\sigma(j)}(y)\right)\right|\\
&\le \left(\prod_{j=1}^mn_j^{-\frac 32}\right)\sum_{\sigma\in\Sigma_m}\sum_{j=1}^m
\sum_{z\in\mathbb Z\setminus\mathcal F}N'_j(z)N'_{\sigma(j)}(z)
\prod_{j'\ne j}\left(\sum_{y\in\mathbb Z}N'_{j'}(y)N'_{\sigma(j')}(y)\right)
\\
&\le \left(\prod_{j=1}^mn_j^{-\frac 32}\right) \sum_{\sigma\in\Sigma_m}\sum_{j=1}^m\#(\mathbb Z\setminus\mathcal F)n_j^{\frac {1+2\gamma}2}n_{\sigma(j)}^{\frac {1+2\gamma}2}
\prod_{j'\ne j}\sqrt{V_{j'}V_{\sigma(j')}}\, ,
\end{align*}
where we used the Cauchy-Schwarz inequality together with the notations and estimates given after Lemma~\ref{LEM2}. Using \eqref{defOmegak} and \eqref{Z-F}, it follows that, on $\Omega_{\boldsymbol k}$,
\begin{align*}
\left|\det \widetilde D'_{\boldsymbol k}-\det \widetilde D_{\boldsymbol k}\right|
&\ll   \left(\prod_{j=1}^mn_j^{-\frac 32}\right)n^{L\theta} \sum_{j=1}^mn_j^{\frac {1+2\gamma}2}n_{\sigma(j)}^{\frac {1+2\gamma}2}
\prod_{j'\ne j}n_{j'}^{\frac {3(1+2\gamma)}4}n_{\sigma(j')}^{\frac {3(1+2\gamma)}4}\\
&\ll \frac 12n^{m\gamma-\frac \theta 2+L\theta}
\ll n^{-\theta'
-(M+1)L\theta
} 
 \le \frac {
n^{-(M+1)L\theta}}2\det \widetilde D_{\boldsymbol k}\, ,
\end{align*}
since $\theta'
\le \frac \theta 2-
2ML\theta
<\frac \theta 2  -m\gamma-ML\theta$ and where we used the fact that $\det \widetilde D_{\boldsymbol k}=\det D_{\boldsymbol k}\prod_{j=1}^mn_j^{-\frac 32}$ together with the definition of $\Omega_{\boldsymbol k}$.
Therefore, on $\Omega_{\boldsymbol k}$, $\det \widetilde D'_{\boldsymbol k}\ge \frac 12 \det \widetilde D_{\boldsymbol k}$.
Thus, due to \eqref{Etape31},
\begin{align}
\nonumber\int_{V_{\boldsymbol k}}\mathbb E&\left[
\prod_{y\in\mathcal F} \left|\varphi_{\xi}\left(\sum_{j=1}^m\theta_jN'_{j}(y)\right)\right|\mathbf 1_{\Omega_{\boldsymbol k}\cap H_{\boldsymbol k}}\right]\, d\boldsymbol\theta\\
\nonumber&\le \mathcal O\left(
\left(\prod_{j=1}^mn_j^{-\frac 34}\right)
 \mathbb E\left[\int_{(\widetilde D'_{\boldsymbol k})^{\frac 12}V''_{\boldsymbol k}}(\det \widetilde D'_{\boldsymbol k})^{-\frac 12}\mathbf 1_{\Omega_{\boldsymbol k}}e^{-\frac{\sigma^2_\xi|\boldsymbol v|^2}4}\, d\boldsymbol v\right]\right)\\
&=\mathcal O\left(
\left(\prod_{j=1}^mn_j^{-\frac 34}\right)
 \mathbb E\left[(\det \widetilde D_{\boldsymbol k})^{-\frac 12}\mathbf 1_{\Omega_{\boldsymbol k}}\int_{(\widetilde D'_{\boldsymbol k})^{\frac 12}V''_{\boldsymbol k}}e^{-\frac{\sigma^2_\xi|\boldsymbol v|^2}4}\, d\boldsymbol v\right]\right)\, ,
\label{1:Neglec00dd}
\end{align}
By definition of $V''_{\boldsymbol k}$,
for any $\boldsymbol v\in(\widetilde D'_{\boldsymbol k})^{\frac 12}V''_{\boldsymbol k}$, $|\boldsymbol v|_2\ge (\widetilde \lambda'_{\boldsymbol k})^{\frac 12}n^{(\frac 14-\eta)\theta}$, where $\widetilde\lambda'_{\boldsymbol k}$ is the smallest eigenvalue of $\widetilde D'_{\boldsymbol k}$. Since all the eigenvalues of $\tilde D'_{\boldsymbol k}$
are nonnegative ($\widetilde D'_{\boldsymbol k}$ being symmetric and nonnegative), it follows that all the  eigenvalues of $\widetilde D'_{\boldsymbol k}$ are smaller than $trace(\widetilde D'_{\boldsymbol k})\le \sum_{j=1}^m \frac{V_j}{n_j^{\frac 32}}\le mn^{3\gamma}$ (on $\Omega_{\boldsymbol k}$).
Thus, on $\Omega_{\boldsymbol k}$,
\begin{align}\label{minolambda}
(\widetilde \lambda'_{\boldsymbol k})^{\frac 12} n^{(\frac 14-\eta)\theta}&\ge \frac{\det (\widetilde D'_{\boldsymbol k})^{\frac 12}}{(m^{\frac 12}n^{\frac {3\gamma}2})^{m-1}}  n^{(\frac 14-\eta)\theta}
\ge 
 \frac{n^{(\frac 14-\eta)\theta-\frac{\theta'}2-\frac{3\gamma(m-1)}2}}{2m^{\frac {m-1}2}}   \gg  n^{\frac{\theta}{16}}\, ,
\end{align}
since $\eta \theta$, $\frac{\theta'}2$, and $\frac{3\gamma(m-1)}2$ are all strictly smaller $\frac \theta {16}$.
Hence
\begin{align*}
&\mathbb E\left[( \det \widetilde D_{\boldsymbol k})^{-\frac 12}\mathbf 1_{\Omega_{\boldsymbol k}}\int_{(\widetilde D'_{\boldsymbol k})^{\frac 12}V''_{\boldsymbol k}}e^{-\frac{\sigma^2_\xi|\boldsymbol v|^2}4}\, d\boldsymbol v\right]\\
&\le \mathbb E\left[(\det \widetilde D_{\boldsymbol k})^{-\frac 12}\mathbf 1_{\Omega_{\boldsymbol k}}\right]\int_{|\boldsymbol v|_2>n^{\frac \theta{16}}}e^{-\frac{\sigma^2_\xi|\boldsymbol v|^2}4}\, d\boldsymbol v
=\mathcal O\left(  n^{-p}  \right)\, ,
\end{align*}
for any $p>0$.
This combined with  \eqref{1:Neglec00bb}, \eqref{1:Neglec00cc} and \eqref{1:Neglec00dd} ends the proof of the lemma.
It will be worthwhile to note that the previous estimate also holds true when $\widetilde \lambda'_{\boldsymbol k}$ is replaced by the smallest eigenvalue $\widetilde \lambda_{\boldsymbol k}$ of $\widetilde D_{\boldsymbol k}$.
\end{proof}

Before proving Lemma~\ref{LEM5}, we state a useful coupling lemma
allowing us to replace $\det D_{\boldsymbol k}$
by a copy independent of $(N'_{j,s})_{j,s}$.\\
Up to enlarging the probability space if necessary, we consider $X'=(X'_k)_{k\ge 1}$ an independent copy of the increments $X=(X_k)_{k\ge 0}$ of the random walk $S$.
We then define the random walk $S''$ as follows:
$S''_m=\sum_{k=1}^mX''_k$ with $X''_k=X_k$ if $k_{j-1}+\ell_{j-1}\le k<k_j$ and $X''_k=X'_k$ if $k_{j}\le k<k_j+\ell_{j}$, with $\ell_j:=\max_{s=1,...,s_j}\ell_{j,s}$.
We define $\Omega''_{\boldsymbol k}$, $N''_j$ and $D''_{\boldsymbol k}$ for the space 
as we have defined $\Omega_{\boldsymbol k}$, $N'_j$, $D_{\boldsymbol k}$ (up to replace $S$ by $S''$).
\begin{lem}\label{lemD-D''}
There exists $\Omega'_{\boldsymbol k}\subset \Omega_{\boldsymbol k}\cap \Omega''_{\boldsymbol k}$ such that 
\begin{equation}\label{POmega'}
\forall p>0,\quad\mathbb P\left((\Omega_{\boldsymbol k}\cap\Omega''_{\boldsymbol k})\setminus \Omega'_{\boldsymbol k}\right)=\mathcal O(n^{-p})\, 
\end{equation}
and such that, on $\Omega'_{\boldsymbol k}$, 
\begin{align*}
\left|(\det D_{\boldsymbol k})^{-\frac 12}-(\det D''_{\boldsymbol k})^{-\frac 12}\right|
&\le n^{-\frac \theta 8-L\theta}
(\det D_{\boldsymbol k}^{-\frac 32}+(\det D''_{\boldsymbol k})^{-\frac 32})\, .
\end{align*}
Moreover
\begin{equation}\label{D-D''}
\mathbb E\left[((\det D_{\boldsymbol k})^{-\frac 12}-(\det D''_{\boldsymbol k})^{-\frac 12})\mathbf 1_{\Omega'_{\boldsymbol k}}\right]
\le   n^{-\frac \theta 8-L\theta}\prod_{j=1}^m n_j^{-\frac 94}\, .
\end{equation}
\end{lem}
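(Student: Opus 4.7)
The plan is to define $\Omega'_{\boldsymbol k}$ by intersecting $\Omega_{\boldsymbol k}\cap\Omega''_{\boldsymbol k}$ with a quantitative control on the accumulated excursion drifts
\[\Xi^{(j)}:=\sum_{r=1}^j\sum_{k=k_r}^{k_r+\ell_r-1}(X_k-X'_k),\]
namely $\max_{1\le j\le m}|\Xi^{(j)}|\le n^{L\theta}$. Each $\Xi^{(j)}$ is a centered sum of at most $mn^{L\theta}$ i.i.d.\ random variables admitting moments of every order, so Markov's inequality with a large enough moment makes the complement of this event of probability $o(n^{-p})$ for every $p>0$; combined with Lemma~\ref{LEM2} this proves \eqref{POmega'}.

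For the pointwise bound, the crucial observation is that on each block $j$, outside the initial sub-interval $[k_{j-1},k_{j-1}+\ell_{j-1})$ on which fresh increments $X'$ have been used, the walk $S''$ coincides with $S$ shifted by the constant $\Xi^{(j-1)}$. Consequently
\[N''_j(y)=N'_j(y+\Xi^{(j-1)})+R_j(y)\]
with $R_j$ supported on $\mathcal O(\ell_{j-1})$ sites and $\|R_j\|_\infty\le 2\ell_{j-1}$. Substituting into $(D''_{\boldsymbol k})_{ij}=\sum_y N''_i(y)N''_j(y)$ and using the quantitative $1/2$-H\"older regularity of $N'_j$ guaranteed by $\Omega^{(j)}_{\boldsymbol k}$ together with $|\Xi^{(j-1)}-\Xi^{(i-1)}|\le 2n^{L\theta}$, one controls the cross and remainder terms to obtain an entry-wise relative error of at most $n^{-\theta/4+L\theta/2+\gamma/2}$. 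Expanding $\det D_{\boldsymbol k}-\det D''_{\boldsymbol k}$ over the $m!$ permutations of $\{1,\dots,m\}$ and telescoping, this relative error propagates to the determinants, so that on $\Omega'_{\boldsymbol k}$
\[|\det D_{\boldsymbol k}-\det D''_{\boldsymbol k}|\le\tfrac12\, n^{-\theta/8-L\theta}\min\bigl(\det D_{\boldsymbol k},\det D''_{\boldsymbol k}\bigr),\]
the constraints $\gamma<L\theta<\eta\theta/(2M)$ leaving room for the inequality to hold.

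The remaining steps are routine. The elementary identity $a^{-1/2}-b^{-1/2}=-\tfrac12\int_a^b x^{-3/2}\,dx$ yields $|a^{-1/2}-b^{-1/2}|\le\tfrac12|a-b|\min(a,b)^{-3/2}\le\tfrac12|a-b|(a^{-3/2}+b^{-3/2})$, and combined with the previous bound gives the first displayed inequality of the lemma. For \eqref{D-D''} one integrates the pointwise bound and uses the negative-moment estimate $\mathbb E[(\det D_{\boldsymbol k})^{-3/2}\mathbf 1_{\Omega_{\boldsymbol k}}]=\mathcal O(\prod_j n_j^{-9/4})$ (and its $D''_{\boldsymbol k}$ analogue), an extension to exponent $3/2$ of the moment estimate underlying Lemma~21 of \cite{BFFN2}. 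The main obstacle is the second step: balancing the gain $n^{-\theta/4}$ per entry against the loss $n^{L\theta/2}$ coming from the size of the shifts (and from the H\"older exponent), and propagating these entrywise relative errors through the permutation expansion while preserving a multiplicative bound on $|\det D_{\boldsymbol k}-\det D''_{\boldsymbol k}|$.
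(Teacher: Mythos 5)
Your construction is essentially the paper's: the drifts you call $\Xi^{(j)}$ are (up to sign) the $h_j=S''_{k_j}-S_{k_j}$ used in the paper, \eqref{POmega'} is obtained there by the same high-moment Markov bound, the identification of $N''_j$ with a shifted $N'_j$ plus a remainder supported on $\mathcal O(\ell_{j-1})$ sites with sup-norm $\mathcal O(\ell_{j-1})$ is exactly the paper's observation, the $\tfrac12$-H\"older control of $N'_j$ from $\Omega^{(j)}_{\boldsymbol k}$ is used in the same way, the determinant difference is expanded over permutations and telescoped identically, and the final integration rests on the same negative-moment estimate from \cite[Lemma 21]{BFFN2}. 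The only structural difference is bookkeeping: you aim at a multiplicative bound $|\det D_{\boldsymbol k}-\det D''_{\boldsymbol k}|\le\tfrac12 n^{-\theta/8-L\theta}\min(\det D_{\boldsymbol k},\det D''_{\boldsymbol k})$, whereas the paper keeps the absolute bound $|\det D_{\boldsymbol k}-\det D''_{\boldsymbol k}|\ll n^{-\theta/8-L\theta}\prod_{j=1}^m n_j^{3/2}$.

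Two caveats. First, ``the entry-wise relative error propagates to the determinants'' is not a general principle: a Gram determinant can be far smaller than the size of its entries, so your multiplicative bound needs the quantitative non-degeneracy built into $\Omega_{\boldsymbol k}\cap\Omega''_{\boldsymbol k}$, namely $\det D_{\boldsymbol k},\,\det D''_{\boldsymbol k}\ge n^{-\theta'}\prod_j n_j^{3/2}$ together with the smallness of $\theta'$ (the paper imposes $\theta'<\theta\eta/2$); once this is invoked explicitly your numerology does close, and the paper's route avoids the issue by never dividing by the determinant at this stage. Second, be aware that your chain does not deliver the displayed inequalities verbatim: combining $|a^{-1/2}-b^{-1/2}|\le|a-b|\,(a^{-3/2}+b^{-3/2})$ with either your relative bound or the paper's absolute bound leaves an extra factor of order $\min(a,b)\approx\prod_j n_j^{3/2}$ on the right, so after taking expectations one obtains an error $\mathcal O\bigl(n^{-\theta/8-L\theta}\prod_j n_j^{-3/4}\bigr)$ rather than the stated $\prod_j n_j^{-9/4}$ in \eqref{D-D''}. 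This is not a defect of your argument relative to the paper: the paper's own computation yields exactly the same weaker bound (the printed statement is best read with the normalized determinants $\det D_{\boldsymbol k}\prod_j n_j^{-3/2}$ on the right), and the $\prod_j n_j^{-3/4}$ version is precisely what is used in the proof of Lemma~\ref{LEM5}; just do not claim the literal \eqref{D-D''} from your estimates.
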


\begin{proof}[Proof of Lemma~\ref{lemD-D''}]
Observe that
\[S''_{k_j}-S_{k_j}= h_j=\sum_{j'<j}\left(S'_{k_{j'}+\ell_{j'}}-S'_{k_{j'}}-(S_{k_{j'}+\ell_{j'}}-S_{k_{j'}})\right)\] and, on $\Omega_{\boldsymbol k}\cap\Omega''_{\boldsymbol k}$,
\[|N'_{j}(z)-N''_{j}(z)|= |N'_{j}(z)-N'_{j}(z+h_j)|
\le 
n_j^{\frac 14+\frac\gamma 2}|h_j|^{\frac 12},\]
for all $z\in\mathbb Z\setminus\bigcup_{m=k_{j-1}}^{k_{j-1}+\ell_j}\{S_{m}, S''_m\}$.\\
We will prove that $\det D_{\boldsymbol k}$ is close enough to $\det D''_{\boldsymbol k}=\det(\sum_{y\in\mathbb Z}N_i''(y)N_j''(y))$.
Due to the Markov inequality,
\[
\forall p>0,\quad
\mathbb P\left( |S_{\ell_j}|>h\right)\le\mathcal O\left(\frac{\ell_j^{\frac p2}}{h^p}\right)=\mathcal O\left(n^{-\gamma'p}\right)\, ,
\]
where we set $h=n^{\gamma'+\frac{\kappa\theta\eta}{20M}}\ge n^{\gamma'}\ell_j^{\frac 12}$. Thus we set 
\[
\Omega'_{\boldsymbol k}:=\Omega_{\boldsymbol k}\cap\Omega''_{\boldsymbol k}\cap\{\forall j=1,...,m,\ |h_j|\le h\}
\]
and we observe that
$\mathbb P\left((\Omega_{\boldsymbol k}\cap\Omega''_{\boldsymbol k})\setminus \Omega'_{\boldsymbol k}\right)=\mathcal O(n^{-p})$ for all $p>0$.
Moreover, on $\Omega'_{\boldsymbol k}$,
\[|N'_{j}(z)-N''_{j}(z)|\le 2\ell_j+n_j^{\frac 14+\frac\gamma 2}h^{\frac 12}\le 3n_j^{\frac 14+\frac\gamma 2}n^{\frac {\gamma'}2+\frac{\kappa\theta\eta}{40M}}\, .\]
Moreover 
\[
V''_j:=\sum_{y\in\mathbb Z}(N''_j(y))^2\le\sum_{y\in\mathbb Z}(N'_j(y))^2+2\ell_j ^3\le n_j^{\frac 32+3\gamma}\, .
\]
This allows us to observe that, on $\Omega'_{\boldsymbol k}$,
\begin{align*}
&\left|\det D_{\boldsymbol k}-\det D''_{\boldsymbol k}\right|\\
&=\left|\sum_{\sigma\in\Sigma_m}(-1)^{\varkappa(\sigma)}\prod_{j=1}^m
\left(\sum_{y\in\mathbb Z}N'_{j}(y)(N'_{\sigma(j)}(y)\right)
-\sum_{\sigma\in\Sigma_m}(-1)^{\varkappa(\sigma)}\prod_{j=1}^m
\left(\sum_{y\in\mathbb Z}N''_{j}(y)N''_{\sigma(j)}(y)\right)\right|\\
&\le \sum_{\sigma\in\Sigma_m}\sum_{j=1}^m
\sum_{z\in\mathbb Z}[N'_j(z)N'_{\sigma(j)}(z)-N''_{j}(z)N''_{\sigma(j)}(z)]
\prod_{j'\ne j}\max\left(\sum_{y\in\mathbb Z}N'_{j'}(y)N'_{\sigma(j')}(y),\sum_{y\in\mathbb Z}N''_{j'}(y)N''_{\sigma(j')}(y)\right)
\\
&\le    3
n^{\frac {\gamma'}2+\frac{\kappa\theta\eta}{40M}}
 \sum_{\sigma\in\Sigma_m}
\sum_{j=1}^m 
\left[
V_j^{\frac 12} n_{\sigma(j)}^{\frac 12+ \gamma }+(V''_{\sigma(j)})^{\frac 12} n_{j}^{\frac 12+ \gamma }\right]
\prod_{j'\ne j}\max\left(V_{j'}V_{\sigma(j')},V''_{j'}V''_{\sigma(j')}\right)^{\frac 12}
\\
&\le    3
n^{\frac {\gamma'}2+\frac{\kappa\theta\eta}{40M}}
m!\prod_{j'=1}^m n_{j'}^{\frac 32+3\gamma}\sum_{j=1}^m n_j^{-\frac 14-\frac\gamma 2}\ll  \prod_{j'=1}^mn_{j'} ^{\frac 32}\sum_{j=1}^mn_j^{-\frac 18}n^{-L\theta}
\, ,
\end{align*}
since $L\theta+3m\gamma-\frac \theta 4+\frac {\gamma'
} 2<-\frac \theta 8-L\theta$, and so, on $\Omega'_{\boldsymbol k}$,
\begin{align*}
\left|(\det D_{\boldsymbol k})^{-\frac 12}-(\det D''_{\boldsymbol k})^{-\frac 12}\right|
&\le  n^{-\frac \theta 8-L\theta}
(\det D_{\boldsymbol k}^{-\frac 32}+(\det D''_{\boldsymbol k})^{-\frac 32})\, .
\end{align*}
We conclude thanks to  \cite[Lemma 21]{BFFN2} which ensures that
$\mathbb E\left[(\det D_{\boldsymbol k})^{-\frac 32} \mathbf 1_{\Omega_{\boldsymbol k}}\right]=\mathcal O\left(\prod_{j=1}^mn_j^{-\frac 94}\right)$.
\end{proof}
The proof of Lemma~\ref{LEM5} will also use the following result. Recall that
we set $\mathcal J=\{j=1,...,m\, : s_j=0\}$ and that $\mathcal J'\subset\mathcal J$.

\begin{lem}\label{LEMsum0}
Under the assumptions of Lemma~\ref{LEM5},
\[
\sum_{k'_{j}=0,...,d-1,\ \forall j\in\mathcal J'}
B_{\boldsymbol k+\boldsymbol k',\boldsymbol\ell,I_{\boldsymbol k}^{(3)},\Omega_{\boldsymbol k}}
=
\frac {d^m}{(2\pi)^M}\int_{I_{\boldsymbol k}^{(3)}} \mathbb E\left[ \mathbf 1_{\Omega_{\boldsymbol k}}
 F
(\boldsymbol\theta,\boldsymbol\theta')
 G(\boldsymbol\theta,\boldsymbol\theta')
\right]
  \, d(\boldsymbol\theta
,\boldsymbol \theta')\, ,
\]
with $\boldsymbol k'\in \mathbb Z^m$ such that $k'_{j}=0$ for all $j\not\in\mathcal J'$, and with
\begin{align*}
G(\boldsymbol\theta,\boldsymbol\theta')&:=\prod_{j\not\in \mathcal J'}\left(
\sum_{a_j\in \alpha k_j+d
\mathbb Z
}\sum_{b_{j,s},...,b_{j,s_j}\in\mathbb Z}f(a_j)\left(\prod_{v=1}^{s_j}f(b_{j,v})\right)
e^{i a_j(\theta_{j+1}-\theta_j)-i\sum_{s=1}^{s_j}(b_{j,s}-a_j)\theta'_{j,s}}\right)\\
&\quad \quad \times\prod_{y \in\mathbb Z\setminus \mathcal S'} \varphi_{\xi}\left( \sum_{j=1}^m\left(\theta_jN'_{j}(y)+\sum_{s=1}^{s_j}\theta'_{j,s}N'_{j,s}(y))\right)\right)\, ,
\end{align*}
with
$\mathcal S'=\bigcup_{j\in\mathcal J'}\mathcal S'_j$, $\mathcal S'_j:=\{S_{k_j},...,S_{k_j+d-1}\}$, so that 
$\{S_{k_j},...,S_{k_j+d-1}\}$ and,  uniformly on $\boldsymbol k,\boldsymbol \ell$ and on $\Omega_{\boldsymbol k}$,
$ F
(\boldsymbol\theta,\boldsymbol\theta')
=\mathcal O\left(\sum_{\mathcal J''\subset\mathcal J'}\prod_{j\in\mathcal J'\setminus\mathcal J''}(|\theta_j|+|\theta_{j+1}|)\mathbf 1_{\bigcap_{j\in\mathcal J''}\mathcal B_j})\right)$
with $\mathcal B_j=\mathbf 1_{\mathcal S'_j\cap\bigcup_{j'\in\mathcal J''\setminus\{ j\}}\mathcal S'_{j'}\ne\emptyset}$.\\
If $\sum_{a\in\mathbb Z}f(b+d\mathbb Z)=0$ for all $b\in\mathbb Z$ (true if $d=1$), then $ F
(\boldsymbol\theta,\boldsymbol\theta')
=\mathcal O\left(\prod_{j\in\mathcal J'}(|\theta_j|+|\theta_{j+1}|)\right)$ (with convention $\theta_{m+1}=0$).
\end{lem}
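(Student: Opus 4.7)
The plan is to start from the Fourier representation \eqref{1:EqFourierbisbis} of $p_{\boldsymbol k + \boldsymbol k', \boldsymbol \ell}$, interchange the sum over $\boldsymbol k'$ with the $(\boldsymbol\theta,\boldsymbol\theta')$-integral and the expectation, and then exploit the coprimality of $\alpha$ and $d$: for any $a_j\in\mathbb Z$, exactly one value $k'_j\in\{0,\dots,d-1\}$ satisfies the congruence $a_j\in(k_j+k'_j)\alpha+d\mathbb Z$, so $\sum_{k'_j=0}^{d-1}\mathbf{1}_{a_j\in(k_j+k'_j)\alpha+d\mathbb Z}=1$. Summing over $(k'_j)_{j\in\mathcal J'}$ will therefore remove the coset constraint on $a_j$ for $j\in\mathcal J'$, while leaving the constraints on $a_j$ for $j\notin\mathcal J'$ intact.

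The key decomposition of the characteristic factor is $\varphi_{\boldsymbol k+\boldsymbol k',\boldsymbol\ell}=\Phi_{\mathrm{bulk}}\cdot\Phi_{\mathrm{bdy}}$, where $\Phi_{\mathrm{bulk}}=\prod_{y\in\mathbb Z\setminus\mathcal S'}\varphi_\xi(\dots)$ is independent of the shifts $(k'_j)_{j\in\mathcal J'}$ (since for $j\in\mathcal J'$ the local time $N'_j(y)$ is unaffected as soon as $y\notin\mathcal S'_j$), while $\Phi_{\mathrm{bdy}}=\prod_{y\in\mathcal S'}\varphi_\xi(\dots)$ carries all the $\boldsymbol k'$-dependence. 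I will group $\Phi_{\mathrm{bulk}}$ together with the $(a_j,b_{j,s})$-sums for $j\notin\mathcal J'$ into $G$, and leave the $j\in\mathcal J'$ sums over $a_j$ together with $\Phi_{\mathrm{bdy}}$ inside $F$.

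To bound $F$, I invoke $\sum_a f(a)=0$ together with $\sum_a|af(a)|<\infty$ (both hypotheses of Theorem~\ref{PAPATCL}) to perform, for each $j\in\mathcal J'$, a first-order Taylor expansion $|\hat f(\theta_j-\theta_{j+1})|\le(|\theta_j|+|\theta_{j+1}|)\sum_a|af(a)|$. The technical obstruction is that $\Phi_{\mathrm{bdy}}$ still depends on $k'_j$, and via the bijection $a_j\mapsto k'_j(a_j)$ on residues modulo $d$ the coset sum $\sum_{a_j\in(k_j+k'_j)\alpha+d\mathbb Z}f(a_j)e^{-ia_j(\theta_j-\theta_{j+1})}$ cannot in general be recombined across $k'_j$ into the full Fourier transform $\hat f$. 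One can do so precisely when the $\mathcal S'_j$-piece of $\Phi_{\mathrm{bdy}}$ decouples from the others, i.e.\ when $\mathcal S'_j$ is disjoint from $\bigcup_{j'\in\mathcal J''\setminus\{j\}}\mathcal S'_{j'}$; in that case, averaging $\Phi_{\mathrm{bdy}}$ over $k'_j$ lets the cancellation go through and produces the factor $|\theta_j|+|\theta_{j+1}|$.

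I then classify indices into $\mathcal J''\subset\mathcal J'$, where the cancellation fails, and its complement, where it succeeds: on the complement one gains the factor $|\theta_j|+|\theta_{j+1}|$ per index, while on $\mathcal J''$ one bounds the corresponding contribution trivially by $1$ but pays the indicator $\mathbf{1}_{\mathcal B_j}$ that witnesses the obstruction. Summing over all possible $\mathcal J''$ yields the stated bound on $F$. Under the stronger hypothesis $\sum_{a\in b+d\mathbb Z}f(a)=0$ for every $b\in\mathbb Z$, each coset sum already has mean zero individually, so the Taylor expansion applies coset-by-coset with no averaging needed, giving $F=\mathcal O\bigl(\prod_{j\in\mathcal J'}(|\theta_j|+|\theta_{j+1}|)\bigr)$ unconditionally. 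The main obstacle I anticipate is making the coset/averaging decomposition rigorous while tracking exactly when distinct indices $j\in\mathcal J'$ genuinely couple through shared sites in $\mathcal S'$.
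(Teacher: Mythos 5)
Your overall architecture is the paper's: the same split of the characteristic factor into a bulk part over $y\notin\mathcal S'$ (grouped into $G$ with the sums over $a_j,b_{j,s}$ for $j\notin\mathcal J'$) and a boundary part over $\mathcal S'$ carrying all the $\boldsymbol k'$-dependence, the same exploitation of $\sum_{a}f(a)=0$ through the sums over $k'_j$, and the same dichotomy per index $j\in\mathcal J'$ (gain a factor $|\theta_j|+|\theta_{j+1}|$ when $\mathcal S'_j$ decouples, otherwise bound trivially and record the indicator $\mathcal B_j$, whose probability is handled afterwards by Lemma~\ref{LEMsum0bis}). The paper implements this via the split $H_{j,k'_j}(\theta_j-\theta_{j+1})-H_{j,k'_j}(0)$ and iterated difference operators $\Delta_j$ acting on $\Psi$, which is just a more structured version of your averaging idea; your treatment of the stronger hypothesis $\sum_{a\in b+d\mathbb Z}f(a)=0$ (coset-by-coset Taylor expansion, no averaging) also coincides with the paper's remark.

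There is, however, a genuine gap at the crux. Your claim that, under decoupling, the coset sums ``can be recombined across $k'_j$ into the full Fourier transform $\hat f$'' is not true as stated: even when $\mathcal S'_j$ is disjoint from the other $\mathcal S'_{j'}$, the boundary factor $\Psi_{\mathcal S'_j}$ still depends on $k'_j$, so $\sum_{k'_j}H_{j,k'_j}(\theta_j-\theta_{j+1})\,\Psi_{\mathcal S'_j}(k'_j)$ is a weighted sum, not $\hat f(\theta_{j+1}-\theta_j)$ times a constant. Writing $\Psi_{\mathcal S'_j}(k'_j)=\bar\Psi+\bigl(\Psi_{\mathcal S'_j}(k'_j)-\bar\Psi\bigr)$, the averaged term does give $\bar\Psi\,\hat f(\theta_{j+1}-\theta_j)=\mathcal O(|\theta_j|+|\theta_{j+1}|)$ because $\hat f(0)=\sum_a f(a)=0$ and $\sum_a|af(a)|<\infty$; but the fluctuation term is a priori only $\mathcal O(1)$, so ``averaging lets the cancellation go through'' asserts the conclusion without the ingredient that makes it work. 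The missing estimate — exactly what the paper's $\Delta_j\Psi$ encodes — is that changing $k'_j$ only moves at most $d$ time indices between the $j$-th and $(j+1)$-th blocks, hence perturbs $N'_j$ and $N'_{j+1}$ by at most $d$ and only at the at most $d$ sites of $\mathcal S'_j$; since $\varphi_\xi$ is Lipschitz and bounded by $1$, this gives $|\Psi_{\mathcal S'_j}(k'_j)-\Psi_{\mathcal S'_j}(0)|=\mathcal O(|\theta_j|+|\theta_{j+1}|)$, and only then does the $k'_j$-sum produce the desired factor. This Lipschitz/local-perturbation bound is also where the decoupling hypothesis is genuinely used (it confines the $k'_j$-dependence to a single factor so that the gains at distinct indices multiply); once you supply it, your plan becomes the paper's proof.
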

\begin{proof}
We start by writing
\begin{equation}\label{CCCC3}
\sum_{k'_{j}=0,...,d-1,\ \forall j\in\mathcal J'}
B_{\boldsymbol k+\boldsymbol k',\boldsymbol\ell,I_{\boldsymbol k}^{(3)},\Omega_{\boldsymbol k}}
=\frac {d^m}{(2\pi)^M}\int_{I_{\boldsymbol k}^{(3)}} \mathbb E\left[ \mathbf 1_{\Omega_{\boldsymbol k}}
F
(\boldsymbol\theta,\boldsymbol\theta')
 G(\boldsymbol\theta,\boldsymbol\theta')
\right]
  \, d(\boldsymbol\theta
,\boldsymbol \theta')\, ,
\end{equation}
where we set
\begin{align*}
F
(\boldsymbol\theta,\boldsymbol\theta'):=\sum_{k'_j=0,...,d-1,\, \forall j\in\mathcal J'}&
\prod_{j\in\mathcal J'}\left(\sum_{a_j\in (k_j+k'_j)\alpha+d\mathbb Z}\left(f(a_j)e^{-ia_j(\theta_j-\theta_{j+1})} \right)\right)\\
&\times\prod_{y\in \mathcal S'} \varphi_{\xi}\left( \sum_{r=1}^m\left(\theta_r\widetilde N'_{r,\boldsymbol k'}(y)+\sum_{s=1}^{s_r}\theta'_{r,s} \widetilde N'_{r,s
}(y))\right)\right)\, ,
\end{align*}
with
\[
\widetilde N'_{r,\boldsymbol k'}(y)=\#\{u=k_{r-1}+k'_{r-1},...,k_r+k'_{r}-1\, :\, S_{u}=y\}\, .
\]
If we had $\sum_{a\in u+d\mathbb Z}f(a)=0$ for all $u\in\mathbb Z$, the proof of Lemma~\ref{LEMsum0} 
will be ended by noticing that
\[
\sum_{a_j\in (k_j+k'_j)\alpha+d\mathbb Z}\left(f(a_j)e^{-ia_j(\theta_j-\theta_{j+1})} \right)
=\sum_{a_j\in (k_j+k'_j)\alpha+d\mathbb Z}\left(f(a_j)\left(e^{-ia_j(\theta_j-\theta_{j+1})}-1\right) \right)\, ,
\]
which is in $\mathcal O\left(|\theta_j|+|\theta_{j+1}|\right)$ since $\sum_{a\in\mathbb Z}|af(a)|<\infty$.
Since we just assume here that $\sum_{a\in\mathbb Z}f(a)=0$, we need a more delicate approach.
We rewrite $F$ as follows
\begin{align*}
F
(\boldsymbol\theta,\boldsymbol\theta'):=\sum_{k'_j=0,...,d-1,\, \forall j\in\mathcal J'}
\sum_{a_j\in (k_j+k'_j)\alpha+d\mathbb Z\, \forall j\in\mathcal J'}
\left(\prod_{j\in\mathcal J'}H_{j,k'_j}(\theta_j-\theta_{j+1})\right)
\Psi(\boldsymbol k')
\end{align*}
with
\[
H_{j,k'_j}(\theta):=\sum_{a_j\in (k_j+k'_j)\alpha+d\mathbb Z}\left(f(a_j)e^{-ia_j\theta
} \right)\, ,
\]
\begin{align*}
\Psi(\boldsymbol k')=
\prod_{y\in \mathcal S'}
 \varphi_{\xi}\left( \sum_{r=1}^m\left(\theta_r\widetilde N'_{r,\boldsymbol k'}(y)+\sum_{s=1}^{s_r}\theta'_{r,s} \widetilde N'_{r,s}(y))\right)\right)\, ,
\end{align*}
and with
$
\widetilde N'_{r,\boldsymbol k'}(y)=\#\{u=k_{r-1}+k'_{r-1},...,k_r+k'_{r}-1\, :\, S_{u}=y\}$. 
Note that $\widetilde N'_{r,\boldsymbol k'}(y)=N'_{r}(y)$ except maybe if 
$r\in\mathcal J'$ and $y\in\mathcal S'_r$ or if $r-1\in\mathcal J'$ and $y\in\mathcal S'_{r-1}$.
We order the elements of $\mathcal J'$ as follows: $j'_1<...<j'_{J'}$
and write
\begin{align*}
F(\boldsymbol\theta,\boldsymbol\theta'
)=F_{0}(\boldsymbol\theta,\boldsymbol\theta')+F_{1}(\boldsymbol\theta,\boldsymbol\theta')
\end{align*}
with
\begin{align*}
F_{1}(\boldsymbol\theta,\boldsymbol\theta')=\sum_{k_{j'_1}=0}^{d-1}
H_{j'_1,k'_{j'_1}}(0)&\sum_{k_{j'_2},...,k_{j'_{J'}}=0}^{d-1}\left(\prod_{j\in\mathcal J'\setminus\{1\}}H_{j,k'_j}(\theta_j-\theta_{j+1})\right)
\Psi(\boldsymbol k')
\end{align*}
and
\begin{align*}
F_{0}(\boldsymbol\theta,\boldsymbol\theta')=&
\sum_{k_{j'_1}=0}^{d-1}
\left(H_{j'_1,k'_{j'_1}}(\theta_{j'_1}-\theta_{j'_1+1})-H_{j'_1,k'_{j'_1}}(0)\right)
\sum_{k_{j'_2},...,k_{j'_{J'}}=0}^{d-1} \left(\prod_{j\in\mathcal J'\setminus\{1\}}H_{j,k'_j}(\theta_j-\theta_{j+1})\right)
\Psi(\boldsymbol k')\, .
\end{align*}
Note that $H_{j'_1,k'_{j'_1}}(\theta_{j'_1}-\theta_{j'_1+1})-H_{j'_1,k'_{j'_1}}(0)$ is in $\mathcal O(|\theta_j|+|\theta_{j+1}|)$.
Since $\sum_af(a)=0$, $F_{1}$ satisfies
\begin{align*}
&F_{1}(\boldsymbol\theta,\boldsymbol\theta'
)=\sum_{k_{j'_1}=0}^{d-1}H_{j'_1,k'_{j'_1}}(0)
\sum_{k_{j'_2},...,k_{j'_{J'}}=0}^{d-1}\left(\prod_{j\in\mathcal J'\setminus\{j'_1\}}H_{j,k'_j}(\theta_j-\theta_{j+1})\right)
\Delta_{j'_1} \Psi(\boldsymbol k')
\end{align*}
with $\Delta_j\phi(\boldsymbol k')=\phi(\boldsymbol k')-\phi(\boldsymbol k'_j)$,
where $\boldsymbol k'_j\in\mathbb N^{m}$ is such that $(\boldsymbol k'_j)_i=k'_i$ for $i\ne  j$, and $(\boldsymbol k'_j)_j=0$.
Proceding iteratively on $\mathcal J'$, we obtain
\begin{equation}\label{FormulaF}
F(\boldsymbol\theta,\boldsymbol\theta')=\sum_{\epsilon_1,...,\epsilon_{J'}\in\{0,1\}}
F_{\epsilon_1,...,\epsilon_{J'}}(\boldsymbol\theta,\boldsymbol\theta')\, ,
\end{equation}
with
\[
F_{\epsilon_1,...,\epsilon_{J'}}(\boldsymbol\theta,\boldsymbol\theta')
=    \left(\prod_{j':\epsilon_{j'}=0} \left(H_{j'_1,k'_{j'_1}}(\theta_{j'_1}-\theta_{j'_1+1})-H_{j'_1,k'_{j'_1}}(0)\right)\right)\left(\prod_{j:\epsilon_j=1}H_{j,k'_{j}}(0)\right)  \Delta^{\epsilon_{J'}}_{j'_{J'}}\cdots\Delta^{\epsilon_1}_{j'_1}\Psi(\boldsymbol k')\, ,
\]
with convention $\Delta_{j'}^0=Id$.
The first part will be easily dominated by $\mathcal O\left(\prod_{j':\epsilon_{j'}=0}(|\theta_{j'}|+|\theta_{j'+1}|)\right)$. Let us study the second part of the formula
exploiting the fact that $\sum_{a\in\mathbb Z}f(a)=0$. The difficulty here is that $\boldsymbol k'$ appears both in $\left(\prod_{j:\epsilon_j=1}H_{j,k'_{j}}(0)\right) $ 	and in $\Delta_{...}\psi(\boldsymbol k')$.
The value of $(\epsilon_1,...,\epsilon_{J'})$ being fixed, we consider the set $\mathcal J''$ of the $j'\in \mathcal J'$ such that $\epsilon_{j'}=1$.
Observe that, if $\mathcal S'_{j'}\cap\mathcal S'_{j}=\emptyset$, then
\[
\Delta_{j'}\Delta_j\Psi(\boldsymbol k')=\left(\Delta_{j'}\Psi_{\mathcal S'\setminus\mathcal S'_j}(\boldsymbol k'_j)\right)\left(\Delta_j\Psi_{\mathcal S'_j}(\widehat{\boldsymbol k}'_j)\right)
\]
with
\[
\Psi_{\mathcal S_0}(\boldsymbol k')=
\prod_{y\in \mathcal S_0}
 \varphi_{\xi}\left( \sum_{r=1}^m\left(\theta_r\widetilde N'_{r,\boldsymbol k'}(y)+\sum_{s=1}^{s_r}\theta'_{r,s} \widetilde N'_{r,s}(y))\right)\right)\, ,
\]
and where we set $\widehat{\boldsymbol k}'_j$ for the vector of $\mathbb Z^m$ with $j$-th coordinate equal to $k'_j$, all the other coordinates being null.
Let $\mathcal J''_0$ be the set of $j\in\mathcal J''$ such that $\mathcal S'_j\cap\bigcup_{j''\in\mathcal S''\setminus\{j\}}\mathcal S'_{j''}=\emptyset$.
Then
\begin{align*}
&\sum_{k_{j}=0,...,d-1,\ \forall j\in\mathcal J''_0}\left(\prod_{j\in\mathcal J''_0}H_{j,k'_{j}}(0)\right)\Delta^{\epsilon_{J'}}_{j'_{J'}}\cdots\Delta^{\epsilon_1}_{j'_1}\Psi(\boldsymbol k')\\
&=\prod_{j\in\mathcal J''_0}\left(\sum_{k'_j=0}^{d-1}H_{j,k'_j}(0)\Delta_j\Psi_{\mathcal S'_j}\left(\widehat{\boldsymbol k}'_j\right)\right)
\Delta_{\mathcal J''\setminus\mathcal J''_0}\Psi(\boldsymbol k'_{\mathcal S''_0})
\end{align*}
with $\boldsymbol k'_{\mathcal S''_0}\in\mathbb N^{m}$ such that $(\boldsymbol k'_j)_i=k'_i$ for $i\not\in  \mathcal S''_0$, the other coordinates being null, the notation $\Delta_{\mathcal J''\setminus \mathcal J''_0}$
standing for the composition of all the operators $\Delta_j$ for $j\in\mathcal  J''\setminus \mathcal J''_0$.
We conclude by using \eqref{FormulaF} and by noticing that 
\[
\left(\prod_{j'\in\mathcal S\setminus\mathcal S''} \left(H_{j'_1,k'_{j'_1}}(\theta_{j'_1}-\theta_{j'_1+1})-H_{j'_1,k'_{j'_1}}(0)\right)\right)=\mathcal O\left(\prod_{j'\in\mathcal S\setminus\mathcal S''}(|\theta_{j'}|+|\theta_{j'+1}|)\right)\, ,
\]
\[
\prod_{j\in\mathcal J''_0}\left(\sum_{k'_j=0}^{d-1}H_{j,k'_j}(0)\Delta_j\Psi_{\mathcal S'_j}\left(\widehat{\boldsymbol k}'_j\right)\right)=
\mathcal O\left(\prod_{j'\in\mathcal S\setminus\mathcal S''}(|\theta_{j'}|+|\theta_{j'+1}|)\right)
\]
and that 
\[
j\in\mathcal J''\setminus\mathcal J''_0\quad\Longrightarrow\quad 
\mathcal S'_j\cap\bigcup_{j'\in\mathcal S''_0\setminus\{ j\}}\mathcal S'_{j'}\, .
\]
\end{proof}
The following lemma will be useful to estimate the term $F$ appearing in Lemma~\ref{LEMsum0}. It is not needed when  $\sum_{a\in\mathbb Z}f(b+ad)=0$ for all $b\in\mathbb Z$.
\begin{lem}\label{LEMsum0bis}
For any $\mathcal J'\subset\mathcal J$,
\[
\mathbb P\left(\Omega_{\boldsymbol k}\cap\bigcap_{j\in\mathcal J'}\mathcal B_j\right)=\mathcal O\left( \sum_{\mathcal J''\subset \mathcal J'\setminus\{\min\mathcal J'\},\ \mathcal J''\ge \#\mathcal J'/2}n^{J\gamma}\prod_{j\in\mathcal J''}(k_{j}-k^-_{j})^{-\frac 12}\right)\, ,
\]
where $k^-_{j}=\max\{k_s\le k_j,\ s\in\mathcal J'\}$.
\end{lem}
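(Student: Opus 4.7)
The plan is to decompose $\bigcap_{j\in\mathcal J'}\mathcal B_j$ according to the realization of the random ``contact graph'' $G$ on $\mathcal J'$ whose edges are the pairs $\{j,j'\}$ with $\mathcal S'_j\cap \mathcal S'_{j'}\neq\emptyset$, then apply the local central limit theorem along each connected component of~$G$. The key observation is that, on the event in question, $G$ has no isolated vertex, so its connected components all have size at least two; this single combinatorial fact accounts for the constraint $\#\mathcal J''\ge \#\mathcal J'/2$ in the statement.

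First, I would refine $\Omega_{\boldsymbol k}$ to $\Omega^*_{\boldsymbol k}:=\Omega_{\boldsymbol k}\cap\{\max_{0\le a\le d-1}|S_{k_j+a}-S_{k_j}|\le n^{\gamma}\ \forall j\}$. Since $d$ is fixed and $X_1$ has moments of every order, Markov's inequality gives $\PP(\Omega_{\boldsymbol k}\setminus\Omega^*_{\boldsymbol k})=\mathcal O(n^{-p})$ for every $p>0$, so this refinement is harmless. On $\Omega^*_{\boldsymbol k}$, each $d$-block satisfies $\mathcal S'_j\subset[S_{k_j}-n^{\gamma},S_{k_j}+n^{\gamma}]$, so any edge of $G$ forces $|S_{k_j}-S_{k_{j'}}|\le 2n^{\gamma}$. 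On $\bigcap_{j\in\mathcal J'}\mathcal B_j$ the graph $G$ has minimum degree one, hence at most $r\le|\mathcal J'|/2$ connected components; propagating the edge bound along a spanning tree of each component $C$ yields $|S_{k_j}-S_{k_{j'}}|\le 2|\mathcal J'|n^{\gamma}$ whenever $j,j'\in C$. Since $|\mathcal J'|\le M$, there are only boundedly many possible realizations $G_0$ of $G$, and I would estimate $\PP(\{G=G_0\}\cap\Omega^*_{\boldsymbol k})$ separately for each and then sum.

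Fix such a $G_0$ with components $C_i=\{c^{(i)}_1<\dots<c^{(i)}_{|C_i|}\}$, and list the non-minima $\bigcup_i(C_i\setminus\{\min C_i\})$ in increasing order $j_{(1)}<\dots<j_{(N)}$, where $N=|\mathcal J'|-r\ge|\mathcal J'|/2$. For each $s$ the component condition forces $S_{k_{j_{(s)}}}$ to lie within $2|\mathcal J'|n^{\gamma}$ of one of the already-revealed values $S_{k_{j'}}$ with $j'\in C(j_{(s)})$ and $j'<j_{(s)}$. Conditioning on $\mathcal F_{k_{j_{(s-1)}}}:=\sigma(X_i:i\le k_{j_{(s-1)}})$ (with the convention $k_{j_{(0)}}=0$), Gnedenko's local central limit theorem gives
\[
\PP\bigl(S_{k_{j_{(s)}}}\in\text{union of }\mathcal O(M)\text{ intervals of length }\mathcal O(n^{\gamma})\,\bigm|\,\mathcal F_{k_{j_{(s-1)}}}\bigr)=\mathcal O\bigl(n^{\gamma}(k_{j_{(s)}}-k_{j_{(s-1)}})^{-1/2}\bigr).
\]
Because $j_{(s-1)}$ is still an element of $\mathcal J'$ strictly below $j_{(s)}$, we have $k_{j_{(s-1)}}\le k_{j_{(s)}}^-$, so the factor is bounded by $\mathcal O(n^{\gamma}(k_{j_{(s)}}-k_{j_{(s)}}^-)^{-1/2})$. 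Iterating over $s=1,\dots,N$ gives
\[
\PP(\{G=G_0\}\cap\Omega^*_{\boldsymbol k})=\mathcal O\Bigl(n^{N\gamma}\prod_{j\in\mathcal J''_{G_0}}(k_j-k_j^-)^{-1/2}\Bigr),
\]
with $\mathcal J''_{G_0}:=\{j_{(1)},\dots,j_{(N)}\}\subset\mathcal J'\setminus\{\min\mathcal J'\}$ of cardinality $\ge|\mathcal J'|/2$. Summing over the $\mathcal O(1)$ admissible graphs $G_0$ and absorbing $n^{N\gamma}$ into $n^{J\gamma}$ (using $N\le|\mathcal J'|\le J$) yields exactly the announced sum over subsets $\mathcal J''\subset\mathcal J'\setminus\{\min\mathcal J'\}$ with $\#\mathcal J''\ge\#\mathcal J'/2$.

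The main subtlety is that a single vertex $j_{(s)}$ may be the higher endpoint of several edges of $G_0$, yet one cannot extract more than one LCLT factor per vertex. The triangle-inequality propagation within a component bypasses this obstruction: it recasts the multi-edge constraints as the single one-dimensional event ``$S_{k_{j_{(s)}}}$ falls into a small union of intervals of total length $\mathcal O(n^{\gamma})$'', which is exactly what the local CLT handles, producing the correct per-vertex factor $(k_{j_{(s)}}-k_{j_{(s)}}^-)^{-1/2}$ and giving the bound $\#\mathcal J'-r\ge\#\mathcal J'/2$ on the number of such vertices.
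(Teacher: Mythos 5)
Your strategy is essentially the paper's own: your contact graph's connected components are exactly the equivalence classes generated by the relation $j\sim m(j)$ used in the paper, the observation that no vertex is isolated (hence at most $\#\mathcal J'/2$ components, hence at least $\#\mathcal J'/2$ non-minimal vertices, none of which can be $\min\mathcal J'$) is the same combinatorial input, and the bound is obtained from one local limit theorem factor per non-minimal vertex plus an $n^{\mathcal O(\gamma)}$ loss from the widths of the windows. The only cosmetic difference is bookkeeping: the paper applies the LCLT along all consecutive indices of $\mathcal J'$ and then counts the admissible class centers $A_r$, while you extract the factors directly by conditioning, which avoids the counting step; the net estimate is the same.

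One intermediate step is mis-stated, although your final bound is the right one. When you condition on $\mathcal F_{k_{j_{(s-1)}}}$, the centers $S_{k_{j'}}$ of your target intervals need not be measurable with respect to that $\sigma$-algebra: $j'$ runs over the elements of $C(j_{(s)})$ below $j_{(s)}$, and such a $j'$ can be a component minimum lying strictly between $j_{(s-1)}$ and $j_{(s)}$. For instance, take $\mathcal J'=\{1,2,3,4\}$ with components $\{1,2\}$ and $\{3,4\}$: for $j_{(2)}=4$ the relevant center is $S_{k_3}$, which is not $\mathcal F_{k_2}$-measurable, and in this configuration the displayed bound $\mathcal O\bigl(n^{\gamma}(k_{j_{(s)}}-k_{j_{(s-1)}})^{-1/2}\bigr)$ is not valid either, since the actual constraint ties $S_{k_4}$ to $S_{k_3}$ and only yields the larger factor $(k_4-k_3)^{-1/2}$. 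The repair is immediate and stays inside your argument: peel the constraints off from the largest non-minimal index downwards, conditioning at time $k^-_{j_{(s)}}$, the largest $k_{j'}$ with $j'\in\mathcal J'$ strictly below $k_{j_{(s)}}$. Then every center and every remaining constraint event is measurable with respect to the conditioning $\sigma$-algebra, the uniform bound $\sup_x\PP(S_r=x)=\mathcal O\bigl(r^{-1/2}\bigr)$ applied to the independent increment $S_{k_{j_{(s)}}}-S_{k^-_{j_{(s)}}}$ gives directly the factor $n^{\gamma}\bigl(k_{j_{(s)}}-k^-_{j_{(s)}}\bigr)^{-1/2}$, and the product over the non-minimal vertices, summed over the finitely many graphs $G_0$, is exactly the announced estimate.
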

\begin{proof}
It is enough to study
\[
\mathbb P\left(\Omega_{\boldsymbol k}\cap\bigcap_{j\in\mathcal J'}\left\{ S_{k_j+r_j}=S_{k_{m(j)}+s_j}\right\}\right)
\]
for any $m(j)\in\mathcal J'\setminus\{j\}$, $r_j,s_j\in\{0,...,d-1\}$.
This probability is dominated by
\[
\mathbb P\left(\Omega_{\boldsymbol k}\cap\left\{\forall j\in\mathcal J',\ |S_{k_j}-S_{k_{m(j)}}|\le n^v\right\}\right)+o(n^{-p})\, ,
\]
for all $p,v>0$.
We partition the set $\mathcal J'$ by the equivalence relation generated by the relation $j\sim m_{j}$. We write $\mathcal R(j)$ for the class of $j$ and $\mathcal R$ for the set of these equivalence classes.
Observe that the number of equivalent classes is at most $\lfloor \#\mathcal J'/2\rfloor$.
We order the set $\mathcal J'$ in $j'_1<...<j'_{J'}$.
We wish to estimate
\[
\sum_{A_r,\ r\in \mathcal R}\mathbb P\left(\Omega_{\boldsymbol k},\, \forall i=1,...,J'-1,\ S_{k_{j'_{i+1}}-k_{j'_{i}}}=A_{\mathcal R(j'_{i+1})}-A_{\mathcal R(j''_i)}+\mathcal O(n^v)\right)\, ,
\]
where the sum is over $(A_r)_{r\in\mathcal R}\in\mathbb Z^{\mathcal R}$ such that $A_{\mathcal R(1)}=0$, $A_{\mathcal R(j'_{i+1})}-A_{\mathcal R(j'_{i})}=\mathcal O((k_{j'_{i+1}}-k_{j'_i})^{\frac 12+\frac\gamma 2})$.
Due to the local limit theorem and the independence of the increments of $S$, the above probability is in
\[
\sum_{A_r,\ r\in \mathcal R} \prod_{i=1}^{J'-1}n^v\left(
O\left((k_{j'_{i+1}}-k_{j'_{i}})^{-\frac 12}\right)
\right)\, .
\]
Now let us control the cardinal of the admissible $(A_r,\ r\in\mathcal R)$.
To this end, consider the set $\overline {\mathcal J'}$
of the smallest representants of $\mathcal R$.
Then the above quantity is smaller than
\[
n^{J'(v+\frac\gamma 2)}\prod_{j\in\mathcal J'\setminus\overline{\mathcal J'}}(k_{j}-k_{j}^-)^{-\frac 12}\, .
\]

\end{proof}

\begin{proof}[Proof of Lemma~\ref{LEM5}]
All the estimates below are uniformly in $\boldsymbol k$.
For the first estimate, we have to estimate the following integral
\begin{align}
\nonumber \int_{ \forall j,|\theta_j|<n_j^{-\frac 12-{\eta}}}&
\left(\prod_{j\not\in\mathcal J'}\left(\sum_{a_j\in\alpha k_j+d\mathbb Z}f(a_j)e^{ia_j(\theta_{j+1}-\theta_j)}
\prod_{s=1}^{s_j}\sum_{b_{j,s}\in\mathbb Z}\left(f(b_{j,s}) e^{-i(b_{j,s}-a_j)\theta'_{j,s}}\right)\right)\right)\\
&\quad\times \mathbb E\left[ \mathbf 1_{\Omega_{\boldsymbol k}}
 F(\boldsymbol\theta,\boldsymbol\theta')
\prod_{y\in\mathbb Z\setminus\mathcal S'}\mathfrak A_y
\right]
 \, d\boldsymbol\theta\, ,\label{1:Neglec}
\end{align}
where we set
\[
\mathfrak A_y:=\varphi_\xi\left(\sum_{j=1}^m\left(\theta_jN'_{j}(y)+\sum_{s=1}^{s_j}\theta'_{j,s}N'_{j,s}(y)\right)\right)\, .
\]
Let us study
\begin{equation}\label{1:defEkl}
E_{\boldsymbol k,\boldsymbol\ell}(\boldsymbol\theta,\boldsymbol\theta'):=\prod_{y\in\mathbb Z\setminus \mathcal S'}\mathfrak A_y-\prod_{y\in\mathbb Z\setminus \mathcal S'}\mathfrak B_y\, ,
\end{equation}
with

\[
\mathfrak B_y
:=\exp\left(-\frac{\sigma^2_\xi}2\left(\sum_{j=1}^m\theta_j N'_{j} (y)\right)^2\right)\varphi_\xi\left(\sum_{j=1}^m\sum_{s=1}^{s_j}\theta'_{j,s}N'_{j,s}(y)\right)\, . 
\]
But, on $\Omega_{\boldsymbol k}$, if $|\theta_j|\le n_j^{-\frac 12-\eta}$ for all $j=1,...,m$, and so
\begin{eqnarray}
\label{1:tnnz}
\forall y\in\mathbb Z,\quad \left|\sum_{j=1}^m\theta_j N'_{j}(y)\right|\le \sum_{j=1}^m|\theta_j| N_{j}^*  \le \sum_{j=1}^mn_j^{-\frac\eta 2}\le m n^{-\frac{\theta\eta}2}<\varepsilon_0\, ,
\end{eqnarray} 
as soon as $n$ is large enough (uniformly on $n_j\in[n^\theta,n]$).
Thus
$|E_{\boldsymbol k,\boldsymbol \ell}(\boldsymbol\theta,\boldsymbol\theta')|$ is dominated by
\begin{align}
\nonumber 
 &  \sum_{y\in\mathbb Z
}\left\vert \mathfrak A_y-\mathfrak B_y\right|
e^{-\frac{\sigma^2_\xi}4 \sum_{z\in\mathcal F\setminus(\mathcal S'\cup\{ y\})}\left(\sum_{j=1}^m\theta_jN'_{j}(z)\right)^2}
\end{align}
for $n$ large enough.
Now, on $\Omega_{\boldsymbol k}$, according to 
\eqref{Z-F},
\begin{equation}\label{minoexpo}
\forall y\in\mathbb Z,\quad \sum_{z\in\mathcal F\setminus(\mathcal S'\cup\{ y\})}\left(\sum_{j=1}^m\theta_jN'_{j}(z)\right)^2\ge \sum_{z'\in\mathbb Z}\left(\sum_{j=1}^m\theta_jN'_{j}(z')\right)^2-M(d+n^{\frac{\eta\theta}{10M}})n^{-\theta\eta}
\, .
\end{equation}
It follows that
\begin{align}
|E_{\boldsymbol k,\boldsymbol \ell}(\boldsymbol\theta,\boldsymbol\theta')|
&\le (A+B)\exp\left(-\frac{\sigma^2_\xi}4  \sum_{z'\in\mathbb Z}\left(\sum_{j=1}^m\theta_jN'_{j}(z')\right)^2-\mathcal O\left(n^{-\frac{9\theta}{10\eta}} \right)\right)\, ,\label{2:majoEkl}
\end{align}
with 
\begin{align}\label{majoquantiteA}
A&:=  \sum_{y\in\mathcal F\setminus\mathcal S'}\left\vert \varphi_\xi\left(\sum_{j=1}^m\theta_jN'_{j}(y)\right)-
    e^{-\frac{\sigma_\xi^2}2\left(\sum_{j=1}^m\theta_j N'_{j}(y)\right)^2 }\right\vert
\le   \sum_{y\in\mathbb Z}\left|\sum_{j=1}^m\theta_j N'_{j}(y)\right|^2\, 
C' n^{-\frac {\kappa\theta\eta} 2}
\end{align}
where we used the fact that
$$\left\vert \varphi_\xi(u)-\exp\left(-\frac{ \sigma^2_\xi|u|^2}2\right)\right\vert\le
|u|^{2+ \kappa}
 \quad \textrm{for all } u\in \RR,$$ 
since $\xi$ admits a moment of order $2+\kappa$ and
there exists $C_0>0$ such that
\begin{align}
\nonumber B&:=  \sum_{y\in\mathbb Z\setminus\mathcal F}\left\vert\varphi_\xi\left(\sum_{j=1}^m\left(\theta_jN'_{j}(y)+\sum_{s=1}^{s_j}\theta'_{j,s}N'_{j,s}(y)\right)\right)-
e^{-\frac{\sigma_\xi^2}2\left(\sum_{j=1}^m\theta_j N'_{j}(y)\right)^2  }\varphi_\xi\left(\sum_{j=1}^m\sum_{s=1}^{s_j}\theta'_{j,s}N'_{j,s}(y)\right)\right\vert\\
&\le C_0  \sum_{y\in\mathbb Z\setminus\mathcal F} \left|\sum_{j=1}^m\theta_jN'_{j}(y)\right|\le   C_0\sum_{j=1}^m\sum_{s=1}^{s_j}\ell_{j,s} n^{-\frac {\theta\eta} 2}=\mathcal O\left(n^{\frac{\theta\eta }{10M} -\frac{\theta\eta} 2}\right)=\mathcal O\left(n^{-\frac{\theta\eta} 4}\right)\, ,\label{majoquantiteB}
\end{align}
since $\varphi_\xi$ and $u\mapsto e^{-\frac{u^2}2}$ are Lipschitz continuous.
Recall that it has been proved in \cite[Lemma 21]{BFFN2} that
\begin{equation}
\label{majoespDet}
\mathbb E
\left[|\det D_{\boldsymbol k}|^{-\frac 12} \mathbf 1_{\Omega_{\boldsymbol k}}\right]=
\mathcal O\left(\prod_{j=1}^mn_j^{-\frac 34}\right)\, ,
\end{equation}
uniformly on $\boldsymbol k$.

Combining Lemmas \ref{LEMsum0} and \eqref{LEMsum0bis}, 
\eqref{2:majoEkl}, \eqref{majoquantiteA}, 
\eqref{majoquantiteB},
\eqref{majoespDet}
and using the change of variable
 $\boldsymbol v=(D_{\boldsymbol k})^{\frac 12}\boldsymbol\theta$
with
$D_{\boldsymbol k}=\left(\sum_{y\in\mathbb Z}N'_{i}(y)N'_{j}(y)\right)_{i,j}$,
it follows that there exists $C_1>0$ such that
\begin{align}
\nonumber&\int_{\forall j,|\theta_j| \le n_j^{-\frac 12-\eta}}\mathbb{E}\left[\left|F(\boldsymbol\theta,\boldsymbol\theta') E_{\boldsymbol k,\boldsymbol \ell}(\boldsymbol\theta,\boldsymbol\theta')\right|{\bf 1}_{\Omega_{\boldsymbol k}}\right]\, d(\boldsymbol \theta,\boldsymbol\theta')\\
\nonumber&\leq C_1
\int_{\mathbb{R}^m} 
\left( n^{-\frac {\kappa\theta\eta} 2}
\vert \boldsymbol v\vert_2^2+\mathcal O(n^{-\frac{\theta\eta} 4})\right)
e^{-\frac{\sigma^2_\xi \vert \boldsymbol v\vert^2}4}\, d\boldsymbol v\\
\nonumber&\quad 
 \sum_{\mathcal J_0\subset \mathcal J'}
\prod_{j\in\mathcal J'\setminus\mathcal J_0}\left(n_j^{-\frac 12-\eta}+n_{j+1}^{-\frac 12-\eta}\right)
\mathbb{E}\left[  
|\det D_{\boldsymbol k}|^{-\frac 12}{\bf 1}_{\Omega_{\boldsymbol k}\cap
\bigcap_{j\in\mathcal J_0}\mathcal B_j}\right] \\
&= \mathcal O\left(
n^{-\frac {\kappa\theta\eta} 4}
\left(\prod_{j=1}^mn_j^{-\frac {3}4}\right)
\mathfrak E_{\boldsymbol k}(\mathcal J')
\right)\, ,\label{MajoEkl}
\end{align}
with
\begin{align*}
\mathfrak E_{\boldsymbol k}(\mathcal J')&=
\sum_{\mathcal J''\subset \mathcal J'}
\prod_{j\in\mathcal J'\setminus\mathcal J''}\left(n_j^{-\frac 12-\eta}+n_{j+1}^{-\frac 12-\eta}\right)
\sum_{\mathcal J_0\subset \mathcal J''\setminus\{\min\mathcal J''\},\ \mathcal J_0\ge \#\mathcal J''/2}n^{J\gamma+\frac{\theta'}2}\left(\prod_{j\in\mathcal J_0}(k_j-k_j^-)^{-\frac 12}\right)\\
&= \mathcal O\left(\sum_{\mathcal J''\subset \mathcal J'\cup(\mathcal J'+1)\, :\, \#\mathcal J''\ge \#\mathcal J'/2}
\left(\prod_{j\in\mathcal J''}n_j^{-\frac 12+\eta}\right)\right)\, .
\end{align*}
where $k^-_{j}=\max\{k_s\le k_j,\ s\in\mathcal J''\}$.
Combining this last estimate with \eqref{1:Neglec} and Lemmas~\ref{LEMsum0}
and~\ref{LEMsum0bis},
\begin{align}
\nonumber\sum_{k'_{j}=0,...,d-1,\ \forall j\in\mathcal J'}B_{\boldsymbol k+\boldsymbol k',\boldsymbol \ell,I^{(3)}_{\boldsymbol k},\Omega_{\boldsymbol k}}&=\frac {d^m}{(2\pi)^M} 
\sum_{(a_j)_{j\not\in\mathcal J'},(b_{j,s})_{j,s}}\mathbf 1_{\{\forall i\not\in\mathcal J',\, a_i\in k_i\alpha+d\mathbb Z\}}
\int_{[-\pi,\pi]^{M-m}
}
\mathbb E\left[I_1(\boldsymbol a)\, I_2(\boldsymbol a,\boldsymbol b)\mathbf 1_{\Omega_{\boldsymbol k}}\right]
 \, d
\boldsymbol\theta'
\\
&\quad \quad\quad+ \mathcal O\left(
 n^{
-\frac {\kappa\theta\eta} 4}\prod_{j=1}^mn_j^{-\frac {3}4}
\mathfrak E_{\boldsymbol k}(\mathcal J')
\right)\, ,
\label{lastestimate}
\end{align}
with
\begin{align}
\nonumber I_1(\boldsymbol a)&:=
\int_{\forall j,\, |\theta_j| \le n_j^{-\frac 12-\eta}}
\left(\prod_{j\not\in\mathcal J'}
 e^{-i\sum_{j=}^m(a_{j}-a_{j-1})\theta_j}
\right)
F(\boldsymbol\theta,\boldsymbol\theta')
 e^{-\frac{\sigma^2_\xi}2\sum_{y\in\mathbb Z\setminus \mathcal S'}(\sum_{j=1}^m\theta_j N'_{j} (y))^2}\, d\boldsymbol \theta\\
\nonumber&=\mathcal O\left(
\int_{\forall j,\, |\theta_j| \le n_j^{-\frac 12-\eta}}F(\boldsymbol\theta,\boldsymbol\theta')
 e^{-\frac{\sigma^2_\xi}2\left(\sum_{y\in\mathbb Z}(\sum_{j=1}^m\theta_j N'_{j} (y))^2-Mdn_j^{-\eta}\right)}\, d\boldsymbol \theta\right)\\
&=\mathcal O\left(\det D_{\boldsymbol k}^{-\frac 12}\sup_{\boldsymbol\theta\in V_{\boldsymbol k}}F(\boldsymbol\theta,\boldsymbol\theta')
\int_{\mathbb R^m
}
 e^{-\frac{\sigma^2_\xi|\boldsymbol v|_2^2}2}
\, d\boldsymbol v\right)\, ,\label{majoI1}
\end{align}
with the change of variable $\boldsymbol v=D_{\boldsymbol k}^{\frac 12}\boldsymbol\theta$
and
\begin{align}
\nonumber I_2(\boldsymbol a,\boldsymbol b)&:=
 \left(\prod_{j\not\in \mathcal J'}\left(f(a_j)\prod_{s=1}^{s_j}f(b_{j,s})e^{-i\sum_{j,s}(b_{j,s}-a_j)\theta'_{j,s}}\right)\right)
\prod_{y\in\mathbb Z\setminus\mathcal S'}\varphi_\xi\left(\sum_{j,s}(\theta'_{j,s}N'_{j,s}(y))\right)
\\
&=\mathcal O\left(\prod_{j\not\in \mathcal J'}\left(f(a_j)\prod_{s=1}^{s_j}f(b_{j,s})\right)\right)\, .\label{majoI2}
\end{align}
Since $\sum_{a\in\mathbb Z}|f(a)|<\infty$, it follows from\eqref{majoespDet}, \eqref{lastestimate},
  \eqref{majoI1} and \eqref{majoI2} that
\begin{equation*}
\sum_{k'_{j}=0,...,d-1,\ \forall j\in\mathcal J'}B_{\boldsymbol k+\boldsymbol k',\boldsymbol \ell,I^{(3)}_{\boldsymbol k},\Omega_{\boldsymbol k}}=\mathcal O\left(
\mathfrak E_{\boldsymbol k}(\mathcal J')
\left(\prod_{j=1}^mn_j^{-\frac 34}\right)\right)\, .
\end{equation*}
This ends the proof of the first point of Lemma \ref{LEM5}.\\

Assume now that $s_j=1$ for all $j=1,...,m$ (in particular $\mathcal J=\emptyset$). Then
\begin{align*}
 I_1(\boldsymbol a)&=
\int_{\forall j,\, |\theta_j| \le n_j^{-\frac 12-\eta}}
 e^{-i\sum_{j=1}^m(a_{j}-a_{j-1})\theta_j}
 e^{-\frac{\sigma^2_\xi}2\sum_{y\in\mathbb Z\setminus \mathcal S'}(\sum_{j=1}^m\theta_j N'_{j} (y))^2}
\, d\boldsymbol\theta\, \\
&=\left(\prod_{j=1}^mn_j^{-\frac 34}\right)
\int_{\forall j,|\theta''_j| \le n_j^{\frac 14-\eta}} e^{-i\sum_{j=1}^m n_j^{-\frac 34}(a_{j}-a_{j-1})\theta''_j}
 e^{-\frac{\sigma^2_\xi}2\sum_{y\in\mathbb Z}(\sum_{j=1}^m\theta''_j n_j^{-\frac 34}N'_{j} (y))^2}
\, d\boldsymbol\theta''\, \\
&=
 \left(\prod_{j=1}^mn_j^{-\frac 34}\right)
\int_{\widetilde D_{\boldsymbol k}^{\frac 12}U_{\boldsymbol k}}(\det \widetilde D_{\boldsymbol k})^{-\frac 12} e^{-i\langle \widetilde D_{\boldsymbol k}^{-\frac 12}(n_j^{-\frac 34} (a_{j}-a_{j-1}))_j,\boldsymbol v\rangle}
e^{-\frac{\sigma^2_\xi|\boldsymbol v|_2^2}2}
\, d\boldsymbol v\, ,
\end{align*}
where $U_{\boldsymbol k}$ is the set of $\boldsymbol\theta''=(\theta''_1,...,\theta''_m)$ such that $|\theta''_j|\le n_j^{\frac 14-\eta}$ for all $j=1,...,m$ and with $\widetilde D_{\boldsymbol k}=\left((n_in_j)^{-\frac 34}\sum_{y\in\mathbb Z} N'_i(y)N'_j(y)\right)_{i,j}$. Moreover
\begin{align*}
I_2(\boldsymbol a,\boldsymbol b)
&=(2\pi)^{\sum_{j=1}^ms_j}
\left(\prod_{j=1}^m\left(f(a_j)
f(b_{j,1})
\right)\right)\mathbb P\left(\left. \forall j,\  \sum_{y\in\mathbb Z\setminus\mathcal S'} N'_{j,1}(y)\xi_y=b_{j,1}-a_j    \right|(N'_{j,1})_j\right)\\
&=(2\pi)^{M-m}
\left(\prod_{j=1}^mf(a_j)\right)\mathbb E\left[\left.  f\left(a_j+\sum_{y\in\mathbb Z} N'_{j,1}(y)\xi_y\right) \mathbf 1_{\{a_j+\sum_{y\in\mathbb Z} N'_{j,1}(y)\xi_y=b_{j,1}\}}  \right|(N'_{j,1})_j\right]\, .
\end{align*}
Thus, it follows that, uniformly in $\boldsymbol k$ and on $\Omega_{\boldsymbol k}$,
\begin{align*}
\frac {d^m}{(2\pi)^{M}}\sum_{b_{1,1},...,b_{m,1}\in\mathbb Z}&I_1(\boldsymbol a)\, I_2(\boldsymbol a,\boldsymbol b)
=\left(\frac d{2\pi}\right)^{m}
\left(\prod_{j=1}^mf(a_j)\right)\\
&\quad \quad (\det D_{\boldsymbol k})^{-\frac 12}
\left(\int_{\mathbb R^m
} e^{-i\langle \widetilde D_{\boldsymbol k}^{-\frac 12} (n_j^{-\frac 34} (a_{j}-a_{j-1}))_j,\boldsymbol v\rangle}
 e^{-\frac{\sigma^2_\xi|\boldsymbol v|_2^2}2}
\, d\boldsymbol v+\mathcal O(n^{-p})\right)\\
&\quad\quad\mathbb E\left[\left.  f\left(a_j+\sum_{y\in\mathbb Z} N'_{j,1}(y)\xi_y\right) 
 \right|(N'_{j,1})_j\right]\, 
\end{align*}
for all $p>0$,
as seen at the end of the proof of Lemma \ref{LEM4} (applied with $\widetilde D_{\boldsymbol k}$)
and so
\begin{align*}
&\frac {d^m}{(2\pi)^{M}}\sum_{b_{1,1},...,b_{m,1}\in\mathbb Z}
I_1(\boldsymbol a)\, I_2(\boldsymbol a,\boldsymbol b)
=\left(\frac d{2\pi}\right)^{m}
\left(\prod_{j=1}^mf(a_j)\right) (\det D_{\boldsymbol k})^{-\frac 12}\\
&\quad\times\left(\int_{\mathbb R^m
} \left(1+\mathcal O\left(
\left(\langle \widetilde D_{\boldsymbol k}^{-\frac 12} (n_j^{-\frac 34} (a_{j}-a_{j-1}))_j,\boldsymbol v\rangle\right)^{2}\right)\right)
 e^{-\frac{\sigma^2_\xi|\boldsymbol v|_2^2}2}
\, d\boldsymbol v+\mathcal O(n^{-p})\right)\\
&\quad \times\mathbb E\left[\left.  f\left(a_j+\sum_{y\in\mathbb Z} N'_{j,s}(y)\xi_y\right) 
\right|(N'_{j,1})\right]\, ,
\end{align*}
for all $p$.
Due to \eqref{lastestimate}, we obtain that
\begin{align}
B_{\boldsymbol k,\boldsymbol \ell,I^{(3)}_{\boldsymbol k},\Omega_{\boldsymbol k}}&=\left(\frac d{2\pi } \right)^m\sum_{a_1,...,a_m\in\mathbb Z}\mathbf 1_{\{\forall i,\, a_i=k_i\alpha+d\mathbb Z\}}\\
&\quad\times\mathbb E\left[ (\det D_{\boldsymbol k})^{-\frac 12}\mathbf 1_{\Omega_{\boldsymbol k}} \prod_{j=1}^mf(a_j) f\left(a_j+\sum_{y\in\mathbb Z} N'_{j,s}(y)\xi_y\right) \right]\left(\frac{\sqrt{2\pi}}{\sigma_\xi}\right)^m\\
&\quad+\mathcal O\left(\prod_{j=1}^mn_j^{-\frac 34}\right)n^{-\frac {\kappa\theta\eta} 4}+\mathbb E\left[(\det D_{\boldsymbol k})^{-\frac 12} (\min_j n_j)^{-\frac 32} \widetilde\lambda_{\boldsymbol k}^{-1}\mathbf 1_{\Omega_{\boldsymbol k}}\right]\, ,\label{ESTII}
\end{align}
where $\widetilde\lambda_{\boldsymbol k}$ is the smallest eigenvalue of $\widetilde D_{\boldsymbol k}$.
For the last term, we use \eqref{minolambda} (applied for $\widetilde D_{\boldsymbol k}$), which ensures that
on $\Omega_{\boldsymbol k}$,
\[
\widetilde \lambda_{\boldsymbol k}\ge \frac{\det \widetilde D_{\boldsymbol k}}{(mn^{3\gamma})^{m-1}}  
\]
and so
\begin{align}
\nonumber (\min_j n_j)^{-\frac 32} \mathbb E\left[(\det D_{\boldsymbol k})^{-\frac 12} \lambda_{\boldsymbol k}^{-1}\mathbf 1_{\Omega_{\boldsymbol k}}\right]&\le (mn^{3\gamma})^{m-1}\nonumber (\min_j n_j)^{-\frac 32} \left(\prod_{j=1}^m n_j^{-\frac 34}\right)
\mathbb E\left[(\det \widetilde D_{\boldsymbol k})^{-\frac 32} \mathbf 1_{\Omega_{\boldsymbol k}}\right]\\
&= \mathcal O \left( \left(\prod_{j=1}^m n_j^{-\frac 34}\right) n^{-\frac {3\theta}2+3(m-1)\gamma}\right )\, ,\label{majoDlambda}
\end{align}
where we used \cite[Lemma 21]{BFFN2} which ensures that
$\mathbb E\left[(\det \widetilde D_{\boldsymbol k})^{-\frac 32} \mathbf 1_{\Omega_{\boldsymbol k}}\right]=\mathcal O\left(1\right)$ uniformly in $\boldsymbol k$. This combined with \eqref{ESTII} implies that
\begin{align}
&
B_{\boldsymbol k,\boldsymbol \ell,I^{(3)}_{\boldsymbol k},\Omega_{\boldsymbol k}}=\mathcal O\left( \left(\prod_{j=1}^m n_j^{-\frac 34}\right) n^{-(M+1)L\theta}\right)
\\
&+\left(\frac d{\sqrt{2\pi}\sigma_\xi n^{\frac 34}}\right)^m\sum_{a_1,...,a_m\in\mathbb Z}\mathbf 1_{\{\forall i,\, a_i=k_i\alpha+d\mathbb Z\}}\mathbb E\left[ (\det D_{\boldsymbol k})^{-\frac 12}\mathbf 1_{\Omega_{\boldsymbol k}} \prod_{j=1}^mf(a_j) f\left(a_j+\sum_{y\in\mathbb Z} N'_{j,s}(y)\xi_y\right) \right]\, ,\label{ESTIII}
\end{align}
since $L<\min\left(\frac {3m}{4M},\frac{\kappa\eta}4\right)$ and since
$L(M+1)\theta<\frac {3\theta}2-3(m-1)\gamma$.

The last step of the proof of the lemma consists in studying the following quantity
\begin{align}\label{defG}
G_{\boldsymbol k}:=&\mathbb E\left[ (\det D_{\boldsymbol k})^{-\frac 12}\mathbf 1_{\Omega_{\boldsymbol k}} \prod_{j=1}^mf(a_j) f\left(a_j+\sum_{y\in\mathbb Z} N'_{j,s}(y)\xi_y\right) \right]\, .
\end{align}
Due to Lemma~\ref{lemD-D''},
\begin{align*}
G_{\boldsymbol k}=&\mathbb E\left[ (\det D''_{\boldsymbol k})^{-\frac 12}\mathbf 1_{\Omega'_{\boldsymbol k}} \prod_{j=1}^mf(a_j) f\left(a_j+\sum_{y\in\mathbb Z} N'_{j,s}(y)\xi_y\right) \right]+\mathcal O\left(n^{-\frac\theta 8-L\theta}\prod_{j=1}^m n_j^{-\frac 34}\right)\\
&= \mathbb E\left[ (\det D_{\boldsymbol k})^{-\frac 12}\mathbf 1_{\Omega_{\boldsymbol k}}\right] \prod_{j=1}^mf(a_j) \mathbb E\left[f\left(a_j+\sum_{y\in\mathbb Z} N'_{j,s}(y)\xi_y\right) \right]+\mathcal O\left(n^{-\frac\theta 8-L\theta}\prod_{j=1}^m n_j^{-\frac 34}\right)\, ,\nonumber
\end{align*}
where we used the fact that $D''_{\boldsymbol k}$ has the same distribution as $D_{\boldsymbol k}$ and 
is independent of $N'_{j,s}$.
This combined with  \eqref{ESTIII}, \eqref{majoDlambda}, \eqref{POmega'} and  \eqref{D-D''} ensures that
\begin{align*}
B_{\boldsymbol k,\boldsymbol \ell,I^{(3)}_{\boldsymbol k},\Omega_{\boldsymbol k}}&=\left(\frac d{\sqrt{2\pi}\sigma_\xi 
}\right)^{ m}\sum_{a_1,...,a_m\in\mathbb Z}\mathbf 1_{\{\forall i,\, a_i=k_i\alpha+d\mathbb Z\}}\mathbb E\left[ (\det D_{\boldsymbol k})^{-\frac 12}\mathbf 1_{\Omega_{\boldsymbol k}}\right]\\
&\ \ \ \ \prod_{j=1}^mf(a_j)\mathbb E\left[ f\left(a_j+Z_{\ell_j}\right) \right]+\mathcal O\left(n^{-L(M+1)\theta}\prod_{j=1}^m n_j^{-\frac 34}\right)
\end{align*}
Moreover \cite[Lemmas 21 and 23]{BFFN2} ensure that
\[
\mathbb E\left[ (\det D_{\boldsymbol k})^{-\frac 12}\mathbf 1_{\Omega_{\boldsymbol k}}\right]=\mathcal O\left(\prod_{j=1}^mn_j^{-\frac 34}\right)\, ,
\]
and that
\[
\mathbb E\left[ (\det D_{\boldsymbol k})^{-\frac 12}\mathbf 1_{\Omega_{\boldsymbol k}}\right]\sim n^{-\frac{3m}4}
\mathbb E\left[ \det \mathcal D_{t_1,...,t_m}^{-\frac 12}\right]
\]
as $k_j/n\rightarrow t_j$ and $n\rightarrow +\infty$.
This ends the proof of the lemma.
\end{proof}

\section{Moment convergence in Theorem~\ref{LLN}}\label{Appendcvgcemoment}
Let $f:\mathbb Z\rightarrow \mathbb R$ be such that $\sum_{a\in\mathbb Z}|f(a)|<\infty$.
In this appendix we prove that all the moments of $ n^{-\frac 14}\sum_{k=0}^{n-1}f(Z_k)$
converge to those of $\sum_{a\in\mathbb Z}f(a)\sigma _\xi^{-1}\mathcal L_{1}(0)$, as $n\rightarrow +\infty$.
\\

Due to Theorem~\ref{propboundmoment}, it is enough to prove the convergence
of every moment.
The key result is the following proposition.
\begin{prop}\label{LemmeLLN}
For all $a_1,...,a_k\in\mathbb Z$,
\begin{eqnarray*}
{\mathbb P} \pare{Z_{n_1}=a_1, \dots,Z_{n_k}=a_k} \sim
\mathbf 1_{\{\forall i,\ a_i\in n_i\alpha+d\mathbb Z\}}
\left(\frac{d} {\sqrt{2\pi}\sigma_\xi}\right)^k\, \mathbb E[\det \mathcal D_{T_1,...,T_k}^{-\frac 12}]
\ n^{-3k/4}\, ,
\end{eqnarray*}  
as $n\rightarrow +\infty$ and $n_i/n\rightarrow T_i$,
where $\mathcal D_{t_1,...,t_k}=(\int_{\mathbb R} L_{t_i}(x)L_{t_j}(x)\, dx)_{i,j=1,...,k}$ where $L$ is the local time of the brownian motion $B$,
 limit of $(S_{\lfloor nt\rfloor}/\sqrt{n})_t$ as $n$ goes to infinity.\\
Moreover, for every $k\ge 1$ and every $\vartheta\in(0,1)$, there exists $C=C(k,\theta)>0$, such that 
$$\PP\left[Z_{n_1}=a_1,\dots ,
   Z_{n_1+\dots+n_k}=a_k\right]\le C\prod_{j=1}^k n_j^{-3/4},$$
for all $n\ge 1$, all $a_1,...,a_k\in\mathbb Z$ and all $n_1,\dots,n_k\in [n^{\vartheta},n]$.
\end{prop}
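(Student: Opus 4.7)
The plan is to specialize the Fourier machinery of Section~\ref{proofPAPATCL} and Appendix~\ref{AppendA} to the case with no auxiliary times (i.e.\ $s_j=0$ for every $j$, so $M=m=k$ and $\mathcal J=\{1,\dots,k\}$), and recognize that a multi-time local limit theorem for the RWRS falls out as a byproduct. Set $k_j:=n_1+\cdots+n_j$. Lemma~\ref{LEM1}, applied with empty $\boldsymbol\ell$, gives the Fourier representation
\[
\mathbb P(Z_{k_1}=a_1,\dots,Z_{k_k}=a_k)=\mathbf 1_{\{\forall i,\ a_i\in k_i\alpha+d\mathbb Z\}}\,\frac{d^k}{(2\pi)^k}\int_{[-\pi/d,\pi/d]^k}e^{-i\sum_j(a_j-a_{j-1})\theta_j}\varphi_{\boldsymbol k}(\boldsymbol\theta)\,d\boldsymbol\theta,
\]
with $\varphi_{\boldsymbol k}(\boldsymbol\theta)=\mathbb E[\prod_y\varphi_\xi(\sum_j\theta_jN'_j(y))]$. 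Split the domain into $I^{(1)}_{\boldsymbol k},I^{(2)}_{\boldsymbol k},I^{(3)}_{\boldsymbol k}$ and the exceptional event $\Omega_{\boldsymbol k}^c$ as in Section~\ref{proofPAPATCL}. Lemma~\ref{LEM2} kills $\Omega_{\boldsymbol k}^c$ (with any polynomial rate), while Lemmas~\ref{LEM3} and~\ref{LEM4} give $O(e^{-n^c})$ and $O(\prod_j n_j^{-5/4+\eta})$ for the contributions of $I^{(1)}_{\boldsymbol k}$ and $I^{(2)}_{\boldsymbol k}$; both are $o(\prod_j n_j^{-3/4})$ since $n_j\ge n^\vartheta$.

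The main contribution comes from $I^{(3)}_{\boldsymbol k}=\{|\theta_j|<n_j^{-1/2-\eta}\ \forall j\}$. On $\Omega_{\boldsymbol k}$, the expansion $|\varphi_\xi(u)-e^{-\sigma_\xi^2u^2/2}|=O(|u|^{2+\kappa})$ together with the estimate \eqref{1:tnnz} (all quantities $\sum_j\theta_jN'_j(y)$ are uniformly small) allows one to replace $\varphi_{\boldsymbol k}(\boldsymbol\theta)$ by the Gaussian surrogate $\exp(-\tfrac{\sigma_\xi^2}{2}\langle D_{\boldsymbol k}\boldsymbol\theta,\boldsymbol\theta\rangle)$, exactly as in the treatment of \eqref{1:defEkl}--\eqref{2:majoEkl}; the resulting error, after integration, is $o(\prod_j n_j^{-3/4})$. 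The remaining Gaussian integral is
\[
\int_{\mathbb R^k}e^{-i\langle\boldsymbol a-\boldsymbol a_-,\boldsymbol\theta\rangle-\frac{\sigma_\xi^2}{2}\langle D_{\boldsymbol k}\boldsymbol\theta,\boldsymbol\theta\rangle}\,d\boldsymbol\theta=\Bigl(\tfrac{2\pi}{\sigma_\xi^2}\Bigr)^{k/2}(\det D_{\boldsymbol k})^{-1/2}\exp\!\Bigl(-\tfrac{1}{2\sigma_\xi^2}\langle D_{\boldsymbol k}^{-1}(\boldsymbol a-\boldsymbol a_-),\boldsymbol a-\boldsymbol a_-\rangle\Bigr),
\]
where $(\boldsymbol a-\boldsymbol a_-)_j=a_j-a_{j-1}$. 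For fixed $a_j$'s, the argument in the exponential is $O(n_j^{-3/2}(a_j-a_{j-1})^2)\to 0$ on $\Omega_{\boldsymbol k}$, so the exponential factor converges to $1$ after taking expectations.

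It remains to pass to the limit in $n^{3k/4}\,\mathbb E[(\det D_{\boldsymbol k})^{-1/2}\mathbf 1_{\Omega_{\boldsymbol k}}]$. By the scaling $(N'_j(y))_y\approx \sqrt{n_j}\,L_{n_j/n}(y/\sqrt n)$ and Brownian invariance, $n^{-3/2}D_{\boldsymbol k}$ converges in distribution to $\mathcal D_{T_1,\dots,T_k}$; combining with the uniform integrability from \cite[Lemmas~21 and~23]{BFFN2} (already invoked at the end of Lemma~\ref{LEM5}) yields
\[
n^{3k/4}\,\mathbb E[(\det D_{\boldsymbol k})^{-1/2}\mathbf 1_{\Omega_{\boldsymbol k}}]\longrightarrow \mathbb E[\det\mathcal D_{T_1,\dots,T_k}^{-1/2}],
\]
and the first asymptotic claim follows. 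For the uniform upper bound, the same decomposition gives the $I^{(1)}_{\boldsymbol k}$ and $I^{(2)}_{\boldsymbol k}$ pieces uniformly small, while the $I^{(3)}_{\boldsymbol k}$ piece is bounded by $C\,\mathbb E[(\det D_{\boldsymbol k})^{-1/2}\mathbf 1_{\Omega_{\boldsymbol k}}]=O(\prod_j n_j^{-3/4})$ thanks to \cite[Lemma~21]{BFFN2}, applied uniformly for $n_j\in[n^\vartheta,n]$.

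The delicate step is controlling the two error contributions on $I^{(2)}_{\boldsymbol k}$ and on $\Omega_{\boldsymbol k}^c$ in a manner \emph{uniform} in $\boldsymbol a$ and in $(n_1,\dots,n_k)$; this requires the sharp H\"older regularity estimates for the discrete local times $N'_j$ and the negative moments of $\det D_{\boldsymbol k}$ from \cite{BFFN2}, which is precisely why the restriction $n_j\ge n^\vartheta$ enters the uniform bound.
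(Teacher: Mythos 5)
Your route is essentially the paper's: the paper proves Proposition~\ref{LemmeLLN} by taking the case $a_i\equiv 0$ from \cite[Theorem 5]{BFFN2} and observing that the general case only inserts the factor $e^{-i\sum_j(a_j-a_{j-1})\theta_j}$ into the Fourier integrals, the sole modification occurring in \cite[Lemma 23]{BFFN2}, where the resulting cosine factor is $1+\mathcal O\left(\min\left(1,\mu_{n_1,\dots,n_k}^{-1}|r|^2\right)\right)$ and hence asymptotically harmless. Your reconstruction — Lemma~\ref{LEM1} with $s_j\equiv 0$, the decomposition into $\Omega_{\boldsymbol k}^c$, $I^{(1)}_{\boldsymbol k}$, $I^{(2)}_{\boldsymbol k}$, $I^{(3)}_{\boldsymbol k}$, the Gaussian surrogate on $I^{(3)}_{\boldsymbol k}$, and the limit $n^{3k/4}\,\mathbb E[(\det D_{\boldsymbol k})^{-1/2}\mathbf 1_{\Omega_{\boldsymbol k}}]\to\mathbb E[\det\mathcal D_{T_1,\dots,T_k}^{-1/2}]$ via \cite[Lemmas 21 and 23]{BFFN2} — is exactly that argument, rebuilt from the present paper's Section~\ref{proofPAPATCL} machinery instead of citing \cite{BFFN2} directly.

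One step needs repair. You justify the Gaussian replacement by $|\varphi_\xi(u)-e^{-\sigma_\xi^2u^2/2}|=\mathcal O(|u|^{2+\kappa})$, which requires $\mathbb E|\xi_0|^{2+\kappa}<\infty$; that hypothesis belongs to Theorem~\ref{PAPATCL}, not to Theorem~\ref{LLN}, and Proposition~\ref{LemmeLLN} must hold under a finite second moment only. The fix is easy and costs nothing here since no rate is required: on $I^{(3)}_{\boldsymbol k}\cap\Omega_{\boldsymbol k}$ one has $\sup_y\left|\sum_j\theta_jN'_j(y)\right|\le m\,n^{-\theta\eta/2}\to 0$ uniformly (as in \eqref{1:tnnz}), so $|\varphi_\xi(u)-e^{-\sigma_\xi^2u^2/2}|\le\varepsilon_n u^2$ with $\varepsilon_n\to0$ on the relevant range, and after summing over $y$ and changing variables $\boldsymbol v=D_{\boldsymbol k}^{1/2}\boldsymbol\theta$ the error is $\varepsilon_n\,\mathcal O\bigl(\prod_j n_j^{-3/4}\bigr)$, which suffices (this is how \cite{BFFN2} proceeds). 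Two minor points: Lemmas~\ref{LEM3} and~\ref{LEM4} are stated for the $f$-weighted sums $B_{\boldsymbol k,\boldsymbol\ell,I,E}$, so for the local limit theorem you should invoke their proofs, which bound $\int_I\mathbb E\bigl[\prod_y|\varphi_\xi(\cdot)|\mathbf 1_{\Omega_{\boldsymbol k}}\bigr]d\boldsymbol\theta$ uniformly in $\boldsymbol a$, rather than their statements; and $n^{-3/2}D_{\boldsymbol k}$ converges to the Gram matrix of the increments $L_{T_j}-L_{T_{j-1}}$ rather than to $\mathcal D_{T_1,\dots,T_k}$ itself — the two determinants coincide because the change of basis is unipotent, which is what makes your limiting statement correct.
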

\begin{proof}
The lemma has been proved for $a_i\equiv 0$ in \cite[Theorem 5]{BFFN2}. The proof in the general case is the straighforward adaptation of \cite[Section 5]{BFFN2}. For completness, we explain the required adaptations. 
The proof of the present result follows line by line the same proof with the adjonction of a term $e^{-i\sum_{j=1}^k(a_j-a_{j-1})\theta_j}$ (with convention $a_0=0$) in the integrals appearing in \cite[Lemma 15]{BFFN2} (see Lemma~\ref{LEM1} with $M=m=k$ and $s_j\equiv 0$). Lemma 16 (definition of the good set) and Propositions 18 and 19 (estimates of the integral of the absolute values) of \cite{BFFN2} are unchanged. The only difference in the proof concern \cite[Proposition 17]{BFFN2} and more specifically \cite[Lemma 23]{BFFN2} for which the there is a multiplication by
$e^{-i\sum_{j=1}^k(a_j-a_{j-1})\theta_j}$ in the integral. The only difference
in the proof of \cite[Lemma 23]{BFFN2} is that the quantity $I_{n_1,...,n_k}$ considered therein ($n_i$ corresponding to $\lfloor nT_i\rfloor-\lfloor nT_{i-1}\rfloor$) is slightly modified with the multiplication in the integral by a quantity converging in probability to 1 (with the notations of the proof of \cite[Lemma 23]{BFFN2}. Indeed, considering the real part of the integral, this quantity is $\cos(\sum_{j=0}^k(a_j-a_{j-1})(A_{n_1,...,n_k}^{-\frac 12}r)_j)$ (with the notations of \cite[Lemma 23]{BFFN2}) which is equal to 1 up to an error in $\mathcal O\left(\min\left(1, \mu_{n_1,...,n_k}^{-1}|r|^2\right)\right)$ where $\mu_{n_1,...,n_k}$
is the smallest eigenvalue of $A_{n_1,...,n_k}$, which is proved to converges to 0 
in \cite[Lemma 23]{BFFN2}, and so the asymptotic behaviour of $I_{n_1,...,n_k}$
is the same as when $a_j\equiv 0$.
\end{proof}
\begin{proof}[Proof of the convergence of moments in Theorem~\ref{LLN}]
Take $\vartheta<\frac 14$.
Note that the last point of the lemma ensures that
$$\PP\left[Z_{n_1}=a_1,\dots 
   Z_{n_1+\dots+n_k}=a_k\right]\le C\, \left(\prod_{i:n_i>n^\vartheta}n_i\right)^{-3/4}.$$
Let $\alpha_0$ be such that $\alpha\alpha_0\in 1+d\mathbb Z$. Then
$a_i=q_i\alpha+d\mathbb Z$ is equivalent to $q_i\in a_i\alpha_0+d\mathbb Z$. 
Thus
\begin{align*}
&\mathbb E
\left[\left(\sum_{q=0}^{n-1}f(Z_q)\right)^k\right]
=\sum_{q_1,...,q_k=0}^{n-1}\mathbb E
\left[f(Z_{q_1})...f(Z_{q_k})\right]=\sum_{a_1,...,a_k\in\mathbb Z}f(a_1)...f(a_k)\sum_{q_1,...,q_k=0}^{n-1}\mathbb P(Z_{q_1}=a_1,...,Z_{q_k}=a_k)\\
&=O(n^{\frac {k-1}4})+
\sum_{r_1,...,r_k=0}^{d-1}\sum_{a_1,...a_k
\in\mathbb Z}f(a_1)...f(a_k)\sum_{q_1,...,q_k=0}^{\lfloor\frac n{d}\rfloor-1}\mathbb P(Z_{r_1+q_1d}=a_1,...,Z_{r_k+q_kd}=a_k)\\
&=O(n^{\frac {k-1}4})+\sum_{a_1,...a_k
\in\mathbb Z}f(a_1)...f(a_k)\sum_{q_1,...,q_k=0}^{\lfloor\frac n{d}\rfloor-1}\mathbb P(Z_{\overline{a_1\alpha_0}+q_1d}=a_1,...,Z_{\overline{a_k\alpha_0}+q_kd}=a_k)\, ,
\end{align*}
with $\overline{x}$ the representant of $x+d\mathbb Z$ belonging to $\{0,...,d-1\}$.
It follows that
\begin{align*}
\mathbb E
\left[\left(\sum_{q=0}^{n-1}f(Z_q)\right)^k\right]
&= o(n^{\frac k 4})+\sum_{a_1,...,a_k 
\in \mathbb Z} f(a_1)...f(a_k) n^k H_k\\
&= o(n^{\frac k 4})+\sum_{a_1,...,a_k 
\in \mathbb Z} f(a_1)...f(a_k) n^k H'_k\, ,
\end{align*}
with
\begin{align*}
H_k&:=\int_{[0,1/d]^k}\mathbb P\left(Z_{\overline{a_1\alpha_0}+\lfloor t_1 n\rfloor d}=a_1,...,Z_{\overline{a_k\alpha_0}+\lfloor t_k n\rfloor d}=a_k\right)\, dt_1...dt_k\\
H'_k&=   \int_{[0,1/d]^k} n^{\frac {3k}4}\mathbb P\left(Z_{\overline{a_1\alpha_0}+\lfloor t_1 n\rfloor d}=a_1,...,Z_{\overline{a_k\alpha_0}+\lfloor t_k n\rfloor d}=a_k\right)\mathbf 1_{\min_{i,j}|\lfloor t_{i}n\rfloor-\lfloor t_{j}n\rfloor|> 2n^\vartheta}\, dt_1...dt_k\, .
\end{align*}
Due to the dominated convergence theorem, we conclude that
\begin{align*}
&\mathbb E
\left[\left(\sum_{q=0}^{n-1}f(Z_q)\right)^k\right]\\
&=  o(n^{\frac k 4})+n^{\frac k4}
\sum_{a_1,...,a_k 
\in \mathbb Z} f(a_1)...f(a_k) \int_{[0,1/d]^k}
\left(\frac{d} {\sqrt{2\pi}\sigma_\xi}\right)^k\, \mathbb E[\det \mathcal D_{t_1d,...,t_kd}^{-\frac 12}]
 dt_1...dt_k
\\
&=  o(n^{\frac k 4})+n^{\frac k4}
\left(\sum_{a\in \mathbb Z} f(a)\right)^k \int_{[0,1]^k}
\left( \sqrt{2\pi}\sigma_\xi\right)^{-k}\, \mathbb E[\det \mathcal D_{t_1d,...,t_kd}^{-\frac 12}]
 dt_1...dt_k
\\
&=  o(n^{\frac k 4})+n^{\frac k4}
\left(\sum_{a\in \mathbb Z} f(a)\sigma_\xi^{-1}\right)^k\mathbb E[(\mathcal L_1(0))^k] \, ,
\end{align*}
due to \cite[Theorem 3]{BFFN2}.
\end{proof}

\end{appendix}

\end{document}